\newtheorem{theorem}{Theorem}[section]
\newtheorem{corollary}[theorem]{Corollary}
\newtheorem{lemma}[theorem]{Lemma}
\theoremstyle{definition}
\newtheorem{remark}[theorem]{Remark}
\newtheorem{assumption}[theorem]{Assumption}
\numberwithin{equation}{section}
\title[Optimal Convergence Rate] {Optimal Convergence Rate for Mirror Descent Methods with special Time-Varying Step Sizes Rules}
\author[M.~S.~Alkousa]{Mohammad S. Alkousa}
\address[M.~S.~Alkousa]{Moscow Institute of Physics and Technology, Russia.}
\email{\tt mohammad.alkousa@phystech.edu}
\author[F.~S.~Stonyakin]{Fedor~S.~Stonyakin}
\address[F.~S.~Stonyakin]{Moscow Institute of Physics and Technology and V. I. Vernadsky Crimean Federal University, Russia.}
\email{{\tt fedyor@mail.ru}}
\author[A.~M.~Abdo]{Asmaa~M.~Abdo}
\address[A.~M.~Abdo]{Mathematics Department, Faculty of Science, Damascus University, Damascus, Syria.}
\email{{\tt asmaa.abdo@damascusuniversity.edu.sy}}
\author[M.~M.~Alcheikh]{Mohammad~M.~Alcheikh}
\address[M.~M.~Alcheikh]{Mathematics Department, Faculty of Science, Damascus University, Damascus, Syria.}
\email{{\tt 	mohammad.alcheikh@damascusuniversity.edu.sy}}
\keywords{Convex optimization, Non-smooth problem, Composite problem, Problem with functional constraints, Mirror descent, Mirror-C descent, Optimal convergence rate, Sub-optimal convergence rate, Time-varying step size.}
\begin{document}

\begin{abstract}
In this paper, the optimal convergence rate $O\left(N^{-1/2}\right)$ (where $N$ is the total number of iterations performed by the algorithm), without the presence of a logarithmic factor, is proved for mirror descent algorithms with special time-varying step sizes, for solving classical constrained non-smooth problems, problems with the composite model {\color{black} and problems with non-smooth functional (inequality types) constraints}. The proven result is an improvement on the well-known rate $O\left(N^{-1/2} \log (N) \right)$ for the mirror descent algorithms with the time-varying step sizes under consideration.  It was studied a new weighting scheme assigns smaller weights to the initial points and larger weights to the most recent points. This scheme improves the convergence rate of the considered mirror descent methods, which in the conducted numerical experiments outperform the other methods providing a better solution in all the considered test problems.
\end{abstract}

\maketitle

\section{Introduction}

The first method for unconstrained minimization of non-smooth convex function was proposed with fixed step size in \cite{shor_book}. In the next years, there were developed several strategies for choosing the step sizes \cite{polyak_book}. Among them, are fixed step length, non-summable diminishing step, square summable but not summable step \cite{Boyd2004Subgradient}, Polyak step \cite{polyak_book}, and many others (see Table \ref{Tab_steps}, below). The optimal convergence rate of the subgradient method is known as $O\left(N^{-1/2}\right)$, where $N$ is the total number of iterations performed by the algorithm \cite{Beck2017book,Bubeck_book,Lan2020book,Nesterov_book}. This rate was proved by using many rules of the step size. One of them is fixed (constant), namely $\gamma_k = O(1)/\sqrt{N} $ for $ k = 1, \ldots, N$ (with special chosen of the constant $O(1)$), which minimizes the upper bound of the difference between the best value of the objective function attained (i.e., the best value observed during the minimization process) and the optimal value. But most of the time it is preferable to use the time-varying step size since the constant step size rule requires the a priori knowledge of the total number of iterations employed by the method. From the mentioned fixed steps, the inspiration was to take $\gamma_k = O(1)/\sqrt{k}$ for $k = 1, \ldots, N$, but the convergence rate, with this rule,  became sub-optimal $O\left(N^{-1/2} \log (N) \right)$. Recently in \cite{Zhihan2023Convergence}, for the projected subgradient method it was proved the optimal convergence rate using the previously mentioned time-varying step size with a new weighting scheme for the generated points each iteration of the algorithm. 
 
The next important advancement in this area is related to the development of the mirror descent method which originated in \cite{Nemirovskii1979efficient,Nemirovsky1983Complexity} and was later analyzed in \cite{Beck2003Mirror}. It is considered as the non-Euclidean extension of standard subgradient methods. This method is used in many applications, see \cite{applications_tomography_2001,article:Nazin_Recursive_Aggregation,article:Nazin_2011,article:Nazin_2014,article:Nazin_2013} and references therein.  The standard subgradient methods employ the Euclidean distance function with a suitable step size in the projection step. Mirror descent extends the standard projected subgradient methods by employing a nonlinear distance function with an optimal step size in the nonlinear projection step \cite{article:luong_weighted_mirror_2016}. 
The mirror descent method is also applicable to optimization problems in Banach spaces where gradient descent is not \cite{article:doan_2019}. An extension of the mirror descent method for constrained problems was proposed in \cite{article:beck_comirror_2010,Nemirovsky1983Complexity}. 
Usually, the step size and stopping criterion for the mirror descent method require knowing the Lipschitz constant of the objective function and constraints, if any. Adaptive step sizes, which do not require this information, are considered for unconstrained problems in \cite{book:nemirovski_lectures_2001}, and for constrained problems in \cite{article:beck_comirror_2010}. 

{\color{black} In addition to the previously mentioned problems, there is a more general class of problems, which is non-smooth optimization problems with non-smooth functional (inequality types) constraints (see problem  \eqref{general_problem_1} or its equivalence \eqref{general_problem}). This class of problems arises and attracts widespread interest in many areas of modern large-scale optimization and its applications \cite{Nemirovski_Robust,Shpirko_primal}. On continuous optimization with functional constraints, there is a long history of studies. The monographs in this area include \cite{24,94}. Some of the works on first-order methods for convex optimization problems with convex functional constraints include \cite{article:adaptive_mirror_2018,43,71,stonyakin2018,stonyakin2019,119,125} for the deterministic setting and \cite{3,2,66,126} for the stochastic setting.}

As we pointed out the mirror descent algorithm was proposed and analyzed a long time ago for different step size rules. However, the optimal convergence rate $O(N^{-1/2})$ of the algorithm was first proved, as for the subgradient method,  with the fixed step size $\gamma_k = O(1)/\sqrt{N} $ for $ k = 1, \ldots, N$. But, when the step sizes are time-varying, namely $\gamma_k = O(1)/\sqrt{k} $,  it was proved the sub-optimal convergence rate $O(N^{-1/2} \log(N) )$, see \cite{Beck2017book,Bubeck_book,Lan2020book,Nesterov_book} for details. Although the constant step size rule is relatively easy to analyze it has the disadvantage of requiring the a priori knowledge of the total number of iterations employed by the method. In practical situations, the number of iterations is not fixed a priori, and stopping criteria different than merely fixing the total number of iterations is usually imposed. This is why dynamic, or time-varying (i.e., non-constant) step size rules are important \cite{Bubeck_book}. 


In this paper, we extend the results of \cite{Zhihan2023Convergence} to the mirror descent methods.  We proved the optimal convergence rate, without the presence of a logarithmic factor, for mirror descent methods with special time-varying step sizes, for solving classical constrained non-smooth problems (see Sect. \ref{sectinMirror}), constrained composite problems (see Sect. \ref{sectinMirrorC}), and problems with functional constraints (see Sect. \ref{section_functio_const}). We demonstrate the efficiency of the theoretical studied results by some conducted experiments for some constrained (with and without functional constraints) optimization problems, such as best approximation problem, Fermat-Torricelli-Steiner problem, smallest covering ball problem and the maximum of a finite collection of linear functions.

The paper consists of an introduction and 5 main sections. In Sect. \ref{sect_basics}  we mentioned the basic facts and tools and mirror descent basics.  Sect. \ref{sectinMirror} devoted to the proof of the optimal convergence rate of the mirror descent algorithm with time-varying step size rules for solving the classical constrained non-smooth optimization problem. In Sect. \ref{sectinMirrorC}, the same analysis that in Sect. \ref{sectinMirror},  performed for more general optimization problems, namely the constrained composite optimization problem. {\color{black} Section \ref{section_functio_const} is devoted to the problem with functional constraints, which is a more general class of non-smooth optimization problems.}  In Sect. \ref{sect_numerical}, we present some numerical experiments which demonstrate the effectiveness of the proposed algorithms and analysis.

\section{Fundamentals }\label{sect_basics}

Let $(\mathbf{E},\|\cdot\|)$ be a normed finite-dimensional vector space, with an arbitrary norm $\|\cdot\|$, and $\mathbf{E}^*$ be the conjugate space of $\mathbf{E}$ with the following norm
$$
\|y\|_{*}=\max\limits_{x \in \mathbf{E}}\{\langle y,x\rangle: \|x\|\leq1\},
$$
where $\langle y,x\rangle$ is the value of the continuous linear functional $y \in \mathbf{E}^*$ at $x \in \mathbf{E}$.

Let $Q \subset \mathbf{E}^n$ be a {\color{black} compact} convex set, and 
$\psi: Q \longrightarrow \mathbb{R}$ be a proper closed differentiable and $\sigma$-strongly convex (called prox-function or distance generating function) with $\sigma > 0 $. The corresponding Bregman divergence is defined as 
$$
V_{\psi} (x, y) = \psi (x) - \psi (y) - \langle \nabla \psi (y), x - y \rangle, \quad \forall x , y \in Q. 
$$
For the Bregmann divergence, it holds the following inequality
\begin{equation}\label{eq_breg}
V_{\psi} (x, y) \geq \frac{\sigma}{2} \|y - x\|^2, \quad \forall x, y \in Q. 
\end{equation}

In what follows, we denote the subdifferential of $f$ at $x$ by $\partial f(x)$, and the subgradient of $f$ at any point $x$ by $\nabla f(x) \in \partial f(x)$.   Let $\operatorname{dom} (f)$ denote the domain of the function $f$, and $\operatorname{dom} (\partial f)$ denote the set of points of subdifferentiability of $f$, i.e., 
$$
\operatorname{dom} (\partial f) = \left\{ x \in  \mathbf{E} : \partial f(x) \ne \emptyset  \right\}. 
$$

The following identity, known as the three points identity, is essential in the analysis of the mirror descent method.

\begin{lemma}\label{three_points_lemma}(Three points identity) \cite{Chen1993}
Suppose that $\psi: \mathbf{E} \longrightarrow (- \infty, \infty]$ is proper
closed, convex, and differentiable over $\operatorname{dom}(\partial \psi)$. Let $a, b \in \operatorname{dom}(\partial(\psi))$ and $c \in \operatorname{dom} (\psi)$. Then it holds 
\begin{equation}\label{eq_three_points}
\langle\nabla \psi(b)-\nabla \psi(a), c-a\rangle=V_{\psi}(c,a)+V_{\psi}(a, b)-V_{\psi}(c, b) .
\end{equation}

\end{lemma}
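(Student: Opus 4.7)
The plan is to prove the identity by pure algebraic expansion, since the three Bregman divergences on the right-hand side are defined directly in terms of $\psi$ and its gradients at the two anchor points $a$ and $b$. Because $a, b \in \operatorname{dom}(\partial \psi)$ the gradients $\nabla \psi(a)$ and $\nabla \psi(b)$ exist, and because $c \in \operatorname{dom}(\psi)$ the value $\psi(c)$ is finite, so each of the three Bregman quantities is well-defined.

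First I will simply write out the definition
$$
V_\psi(x,y) = \psi(x) - \psi(y) - \langle \nabla \psi(y), x-y\rangle
$$
for the three pairs $(c,a)$, $(a,b)$, and $(c,b)$. Substituting these into $V_\psi(c,a)+V_\psi(a,b)-V_\psi(c,b)$, the $\psi$-value terms $\pm\psi(c)$, $\pm\psi(a)$, and $\pm\psi(b)$ will each appear with both signs and cancel completely, leaving only the three linear functional terms.

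Next I will combine those linear terms. One term is $-\langle \nabla \psi(a), c-a\rangle$, and the other two involve $\nabla \psi(b)$ evaluated at $a-b$ and $c-b$ respectively. Grouping the two $\nabla \psi(b)$ terms gives $\langle \nabla \psi(b), (c-b)-(a-b)\rangle = \langle \nabla \psi(b), c-a\rangle$. Adding the remaining $-\langle \nabla \psi(a), c-a\rangle$ produces exactly $\langle \nabla \psi(b)-\nabla \psi(a), c-a\rangle$, which is the left-hand side.

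There is no genuine obstacle; the only thing to watch is the bookkeeping of signs and the verification that the $-\langle \nabla\psi(b), a-b\rangle+\langle \nabla\psi(b), c-b\rangle$ combines cleanly to $\langle \nabla\psi(b), c-a\rangle$. Since convexity and strong convexity of $\psi$ play no role in the identity itself (they only ensure that $V_\psi$ is nonnegative and strictly convex in its first argument), no inequality or subdifferential calculus is needed.
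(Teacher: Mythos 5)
Your proof is correct: expanding the three Bregman divergences from the definition, the $\psi(a)$, $\psi(b)$, $\psi(c)$ terms cancel in pairs, and the linear terms regroup as $-\langle\nabla\psi(a),c-a\rangle+\langle\nabla\psi(b),(c-b)-(a-b)\rangle=\langle\nabla\psi(b)-\nabla\psi(a),c-a\rangle$. The paper itself gives no proof of this lemma (it is cited from Chen and Teboulle), and your direct algebraic verification is exactly the standard argument for it; you are also right that convexity plays no role in the identity itself.
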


\noindent
\textbf{Fenchel-Young inequality.} For any $a \in \mathbf{E}, b \in \mathbf{E}^*$, it holds the following inequality
\begin{equation}\label{Fenchel_Young_ineq}
  \langle a,b \rangle \leq \frac{\|a\|^2}{2 \lambda} + \frac{\lambda\|b\|_*^2}{2}, \quad \forall \lambda > 0 .
\end{equation}


\section{Mirror descent method for constrained problem}\label{sectinMirror}


In this section, we consider the following constrained optimization problem
\begin{equation}\label{main_constrained_prob}
\min_{x \in Q} f(x),
\end{equation}
where $f$ is a convex Lipschitz continuous function (non-smooth)  with Lipschitz constant $M_f >0$, i.e., 
\begin{equation}\label{lipschitz_cond}
|f(x) - f(y)| \leq M_f \|x - y\|, \quad \forall x, y \in Q. 
\end{equation}

{\color{black}For problem \eqref{main_constrained_prob}, we use Algorithm \ref{alg_mirror_descent}, under consideration
\begin{equation}\label{cond_x1xstar}
\max_{x \in Q} V_{\psi} (x^*, x) \leq V_{\psi} (x^*, x^1) < \infty,    
\end{equation}
where $x^1 \in Q$ is a chosen (dependently on $Q$) initial point and $x^*$ is an optimal solution of \eqref{main_constrained_prob} nearest to $x^1$.}

\begin{algorithm}[!ht]
\caption{Mirror Descent Method.}\label{alg_mirror_descent}
\begin{algorithmic}[1]
\REQUIRE step sizes $\{\gamma_k\}_{k \geq 1}$,  initial point $x^1  \in Q$ {\color{black} s.t. \eqref{cond_x1xstar} holds}, number of iterations $N$.
\FOR{$k= 1, 2, \ldots, N$}
\STATE Calculate $\nabla f(x^k) \in \partial f(x^k)$,
\STATE $x^{k+1} = \arg\min_{x \in Q} \left\{ \langle x, \nabla f(x^k)  \rangle + \frac{1}{\gamma_k} V_{\psi}(x,x^k) \right\} $.
\ENDFOR
\end{algorithmic}
\end{algorithm}

\begin{theorem}\label{theo_main_ineq_mirror_desc}
Let $f$ be an $M_f$-Lipschitz convex function, and assume $ \|\nabla f(x)\|_* \leq M_f, \forall x \in Q$. Then for problem \eqref{main_constrained_prob}, by Algorithm \ref{alg_mirror_descent}, with a positive non-increasing sequence of step sizes $\{\gamma_k\}_{k \geq 1}$, for any fixed $m \geq -1$, it satisfies the following inequality
\begin{equation}\label{main_ineq_mirror_desc}
f(\hat{x}) - f(x^*) \leq  \frac{1}{\sum_{k= 1}^{N} \gamma_k^{-m}} \left( \frac{V_{\psi}(x^*, x^1) }{\gamma_N^{m+1}} + \frac{1}{2 \sigma} \sum_{k= 1}^{N} \frac{\|\nabla f(x^k)\|_*^2}{\gamma_k^{m -1}} \right),
\end{equation}
where $\hat{x} = \frac{1}{\sum_{k= 1}^{N} \gamma_k^{-m}} \sum_{k = 1}^{N} \gamma_k^{-m} x^k $, and $x^*$ is an optimal solution of \eqref{main_constrained_prob}.
\end{theorem}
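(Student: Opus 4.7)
The plan is to derive the standard per-iteration mirror-descent inequality and then combine over $k$ with a weighted Abel summation argument, rather than plain telescoping, in order to cope with the time-varying weights $\gamma_k^{-(m+1)}$.

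First I would use the optimality condition for the prox-map defining $x^{k+1}$, namely $\langle \gamma_k \nabla f(x^k) + \nabla\psi(x^{k+1}) - \nabla\psi(x^k),\ u - x^{k+1}\rangle \ge 0$ for every $u \in Q$, and apply the three-point identity (Lemma~\ref{three_points_lemma}) with $(a,b,c) = (x^{k+1}, x^k, u)$ to rewrite the $\psi$-gradient inner product as $V_{\psi}(u,x^k) - V_{\psi}(u,x^{k+1}) - V_{\psi}(x^{k+1}, x^k)$. Combining with the convexity bound $f(x^k) - f(u) \le \langle \nabla f(x^k), x^k - u\rangle$, splitting off the cross-term $\gamma_k\langle \nabla f(x^k), x^k - x^{k+1}\rangle$, bounding it via Fenchel--Young \eqref{Fenchel_Young_ineq} with $\lambda = 1/\sigma$, and using \eqref{eq_breg} to absorb $-V_{\psi}(x^{k+1},x^k)$ yields (with $u = x^*$) the one-step estimate
\[
\gamma_k\bigl(f(x^k) - f(x^*)\bigr) \le V_{\psi}(x^*, x^k) - V_{\psi}(x^*, x^{k+1}) + \frac{\gamma_k^2 \|\nabla f(x^k)\|_*^2}{2\sigma}.
\]

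Next, I would multiply both sides by $\gamma_k^{-(m+1)}>0$ and sum over $k=1,\dots,N$. The quadratic piece aggregates immediately into the second term of \eqref{main_ineq_mirror_desc}. The weighted Bregman differences no longer telescope, so I would reorder by Abel summation:
\begin{align*}
\sum_{k=1}^N \gamma_k^{-(m+1)}\bigl[V_{\psi}(x^*, x^k) - V_{\psi}(x^*, x^{k+1})\bigr] &= \gamma_1^{-(m+1)} V_{\psi}(x^*, x^1) - \gamma_N^{-(m+1)} V_{\psi}(x^*, x^{N+1}) \\
&\quad + \sum_{k=2}^N \bigl(\gamma_k^{-(m+1)} - \gamma_{k-1}^{-(m+1)}\bigr) V_{\psi}(x^*, x^k).
\end{align*}
The hypothesis $m \ge -1$ combined with $\{\gamma_k\}$ non-increasing makes $\{\gamma_k^{-(m+1)}\}$ non-decreasing, so every weight increment is $\ge 0$. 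Dropping the non-positive end term $-\gamma_N^{-(m+1)} V_{\psi}(x^*, x^{N+1})$ and invoking \eqref{cond_x1xstar} to bound $V_{\psi}(x^*, x^k) \le V_{\psi}(x^*, x^1)$ uniformly in $k$, the positive increments telescope and the whole quantity collapses to $V_{\psi}(x^*, x^1)/\gamma_N^{m+1}$. Dividing by $\sum_{k=1}^N \gamma_k^{-m}$ and applying Jensen's inequality to the convex combination defining $\hat x$ then delivers \eqref{main_ineq_mirror_desc}.

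The main obstacle I expect is this weighted-summation step: plain telescoping is unavailable, and the Abel rearrangement must be carried out with care. It is exactly here that both hypotheses become essential -- the constraint $m \ge -1$ is what fixes the correct sign of the weight increments after the rearrangement, and condition \eqref{cond_x1xstar} is what dominates each intermediate $V_{\psi}(x^*, x^k)$ by a single constant so that the collapsed sum produces the clean $1/\gamma_N^{m+1}$ scaling appearing on the right-hand side.
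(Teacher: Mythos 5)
Your proposal is correct and follows essentially the same route as the paper: the same one-step estimate via the prox-map optimality condition, the three-point identity, Fenchel--Young plus strong convexity, then weighting by $\gamma_k^{-(m+1)}$, rearranging the non-telescoping Bregman differences, using $m\ge -1$ with the monotonicity of $\{\gamma_k\}$ to sign the weight increments, bounding each $V_{\psi}(x^*,x^k)$ by $V_{\psi}(x^*,x^1)$ via \eqref{cond_x1xstar}, and finishing with Jensen. Your explicit Abel-summation formulation is just a cleaner packaging of the index manipulation the paper carries out term by term (and your $\lambda=1/\sigma$ in Fenchel--Young is the value actually needed, despite the paper writing $\lambda=\sigma$).
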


\begin{proof}
Let $\tilde{f}(x) :=  \langle x, \nabla f(x^k)  \rangle + \frac{1}{\gamma_k} V_{\psi}(x,x^k)$, then from Algorithm \ref{alg_mirror_descent} we have $x^{k+1} =\arg\min\limits_{x \in Q} \tilde{f}(x) $. By the optimality condition, we get
$$
\langle \nabla \tilde{f}(x^{k+1}), x - x^{k+1} \rangle \geq 0, \quad \forall x \in Q.
$$
Thus, 
$$
\langle \gamma_k \nabla f(x^k) + \nabla \psi(x^{k+1})- \nabla \psi(x^k), x - x^{k+1} \rangle \geq 0, \quad \forall x \in Q.
$$
In particular for $x = x^*$, we have
\begin{equation}\label{eq_1}
\langle \nabla \psi(x^k) -\nabla \psi(x^{k+1}) - \gamma_k \nabla f(x^k), x^* - x^{k+1} \rangle \leq 0.
\end{equation}

By using the definition of a subgradient of $f$ at $x^*$, we get
\begin{equation*}
   \begin{aligned}
       \gamma_k \left( f(x^k) - f(x^*) \right) & \leq  \gamma_k \langle \nabla f(x^k), x^k - x^* \rangle
       \\& = \underbrace{\langle \nabla \psi(x^k) -\nabla \psi(x^{k+1}) - \gamma_k \nabla f(x^k), x^* - x^{k+1} \rangle}_{:=J_1} 
       \\& \;\;\;\; + \underbrace{\langle \nabla \psi(x^{k+1}) -\nabla \psi(x^{k}), x^* - x^{k+1}  \rangle}_{: = J_2} 
        \\& \;\;\;\; + \underbrace{\langle \gamma_k \nabla f(x^{k}), x^k - x^{k+1}  \rangle}_{: = J_3} . 
   \end{aligned} 
\end{equation*}
From \eqref{eq_1}, we find $J_1 \leq 0$. By Lemma \ref{three_points_lemma}, we have 
\begin{equation*}
   \begin{aligned}
       J_2 & = - \langle \nabla \psi(x^{k}) -\nabla \psi(x^{k+1}), x^* - x^{k+1}  \rangle
        \\& = - \left( V_{\psi} (x^*, x^{k+1}) + V_{\psi} (x^{k+1}, x^k) - V_{\psi} (x^*, x^{k}) \right) 
       \\& = V_{\psi} (x^*, x^{k}) - V_{\psi} (x^*, x^{k+1}) - V_{\psi} (x^{k+1}, x^k). 
   \end{aligned} 
\end{equation*}
By the Fenchel-Young inequality \eqref{Fenchel_Young_ineq}, with $\lambda = \sigma > 0 $, we find 
$$
J_3 \leq \frac{\gamma_k^2}{2 \sigma} \|\nabla f(x^k)\|_*^2 + \frac{\sigma}{2} \|x^k - x^{k+1}\|^2. 
$$
Therefore, we get
$$
\begin{aligned}
\gamma_k \left( f(x^k) - f(x^*) \right) & \leq V_{\psi} (x^*, x^{k}) - V_{\psi} (x^*, x^{k+1}) - V_{\psi} (x^{k+1}, x^k) 
\\& \;\;\;\;+ \frac{\gamma_k^2}{2 \sigma} \|\nabla f(x^k)\|_*^2  + \frac{\sigma}{2} \|x^k - x^{k+1}\|^2. 
\end{aligned}
$$

From \eqref{eq_breg}, we have 
$$
V_{\psi} (x^{k+1}, x^{k}) \geq \frac{\sigma}{2} \|x^{k+1} - x^k\|^2 .
$$

Thus, we get the following inequality
\begin{equation}\label{eq_2}
f(x^k) - f(x^*) \leq \frac{1}{\gamma_k} \left( V_{\psi} (x^*, x^{k}) - V_{\psi} (x^*, x^{k+1}) \right) + \frac{\gamma_k}{2 \sigma} \|\nabla f(x^k)\|_*^2. 
\end{equation}

By multiplying both sides of \eqref{eq_2} by $\frac{1}{\gamma_k^m}$, and taking the sum from $1$ to $N$, we get
$$
\begin{aligned}
    \sum_{k = 1}^{N} \frac{1}{\gamma_k^m} \left( f(x^k) - f(x^*) \right) & \leq  \sum_{k = 1}^{N}\frac{1}{\gamma_k^{m+1}} \left( V_{\psi} (x^*, x^{k}) - V_{\psi} (x^*, x^{k+1}) \right) 
    \\& \;\;\;\; + \sum_{k = 1}^{N} \frac{\|\nabla f(x^k)\|_*^2}{2 \sigma \gamma_k^{m-1}}.
\end{aligned}
$$

Since $f$ is a convex function, we have 
$$
\begin{aligned}
    \left(\sum_{k = 1}^{N} \frac{1}{\gamma_k^{m}}\right) & \left[ f\left(\frac{1}{\sum_{k = 1}^{N} \gamma_k^{-m}} \sum_{k = 1}^{N} \gamma_k^{-m} x^k\right) - f(x^*) \right] \leq
    \\& \qquad \qquad \qquad \qquad \qquad \leq \sum_{k = 1}^{N} \frac{1}{\gamma_k^m} \left( f(x^k) - f(x^*) \right). 
\end{aligned}
$$

Therefore, by setting $\hat{x} : = \frac{1}{\sum_{k = 1}^{N} \gamma_k^{-m}} \sum_{k = 1}^{N} \gamma_k^{-m} x^k$, we get
\begin{equation*}
\begin{aligned}
    & \left(\sum_{k = 1}^{N} \gamma_k^{-m}\right) \left(f(\hat{x}) - f(x^*)\right)  
     \leq
    \frac{1}{\gamma_1^{m+1}} \left( V_{\psi}(x^*, x^1)  - V_{\psi}(x^*, x^2)\right)   
    \\& \;\;\;\; + \sum_{k = 2}^{N-1}\frac{1}{\gamma_k^{m+1}} \left( V_{\psi}(x^*, x^k)  - V_{\psi}(x^*, x^{k+1})\right)
    \\& \;\;\;\; + \frac{1}{\gamma_N^{m+1}} \left( V_{\psi}(x^*, x^N)  - V_{\psi}(x^*, x^{N+1})\right) 
    + \frac{1}{2 \sigma} \sum_{k = 1}^{N} \frac{\|\nabla f(x^k)\|_*^2}{\gamma_k^{m-1}}   
    \\& \leq \frac{1}{\gamma_1^{m+1}} V_{\psi}(x^*, x^1) +   \frac{1}{\gamma_N^{m+1}} V_{\psi}(x^*, x^N) + \sum_{k = 2}^{N-1} \frac{1}{\gamma_k^{m+1}} V_{\psi}(x^*, x^k)  
    \\& \;\;\;\;- \frac{1}{\gamma_1^{m+1}} V_{\psi}(x^*, x^2) - \sum_{k = 2}^{N-1} \frac{1}{\gamma_k^{m+1}} V_{\psi}(x^*, x^{k+1}) 
    + \frac{1}{2 \sigma} \sum_{k = 1}^{N} \frac{\|\nabla f(x^k)\|_*^2}{\gamma_k^{m-1}}   
    \\& = \frac{1}{\gamma_1^{m+1}} V_{\psi}(x^*, x^1) + \sum_{k = 2}^{N} \frac{1}{\gamma_k^{m+1}} V_{\psi}(x^*, x^k) - \frac{1}{\gamma_1^{m+1}}   V_{\psi}(x^*, x^2) 
    \\& \;\;\;\; - \sum_{k = 3}^{N} \frac{1}{\gamma_{k-1}^{m+1}} V_{\psi}(x^*, x^k)
    + \frac{1}{2 \sigma} \sum_{k = 1}^{N} \frac{\|\nabla f(x^k)\|_*^2}{\gamma_k^{m-1}} 
    \\& = \frac{1}{\gamma_1^{m+1}} V_{\psi}(x^*, x^1) + \sum_{k = 2}^{N} \frac{1}{\gamma_k^{m+1}} V_{\psi}(x^*, x^k)  - \sum_{k = 2}^{N} \frac{1}{\gamma_{k-1}^{m+1}} V_{\psi}(x^*, x^k)
\\& \;\;\;\;   + \frac{1}{2 \sigma} \sum_{k = 1}^{N} \frac{\|\nabla f(x^k)\|_*^2}{\gamma_k^{m-1}}
\end{aligned}    
\end{equation*}

Due to \eqref{cond_x1xstar}, we find that $V_{\psi} (x^*, x^k) \leq V_{\psi} (x^*, x^1), \, \forall k = 2, \ldots, N$. Hence we get
\begin{equation*}
\begin{aligned}
& \left(\sum_{k = 1}^{N} \gamma_k^{-m}\right) \left(f(\hat{x}) - f(x^*)\right) \leq   
\\&  \leq \frac{1}{\gamma_1^{m+1}} V_{\psi}(x^*, x^1) +V_{\psi}(x^*, x^1)  \sum_{k = 2}^{N} \left(\frac{1}{\gamma_k^{m+1}} - \frac{1}{\gamma_{k-1}^{m+1}}\right) 
+ \frac{1}{2 \sigma} \sum_{k = 1}^{N} \frac{\|\nabla f(x^k)\|_*^2}{\gamma_k^{m-1}} 
\\& = \frac{1}{\gamma_1^{m+1}} V_{\psi}(x^*, x^1) + V_{\psi}(x^*, x^1) \left(-\frac{1}{\gamma_1^{m+1}} + \frac{1}{\gamma_{N}^{m+1}} \right) 
+ \frac{1}{2 \sigma} \sum_{k = 1}^{N} \frac{\|\nabla f(x^k)\|_*^2}{\gamma_k^{m-1}} 
\\& = \frac{1}{\gamma_N^{m+1}} V_{\psi}(x^*, x^1)
+ \frac{1}{2 \sigma} \sum_{k = 1}^{N} \frac{\|\nabla f(x^k)\|_*^2}{\gamma_k^{m-1}}. 
\end{aligned}    
\end{equation*}

Finally, by dividing by $\sum_{k= 1}^{N} \gamma_k^{-m}$,  we get the desired inequality
\begin{equation*}
f(\hat{x}) - f(x^*) \leq  \frac{1}{\sum_{k= 1}^{N} \gamma_k^{-m}} \left( \frac{V_{\psi}(x^*, x^1) }{\gamma_N^{m+1}} + \frac{1}{2 \sigma} \sum_{k= 1}^{N} \frac{\|\nabla f(x^k)\|_*^2}{\gamma_k^{m -1}} \right).
\end{equation*}

\end{proof}

As a special case of Theorem \ref{theo_main_ineq_mirror_desc}, {\color{black} with a special value of the parameter $m$,} we can deduce the well-known sub-optimal convergence rate  $O\left(\frac{\log(N)}{\sqrt{N}}\right)$ of Algorithm \ref{alg_mirror_descent}, see \cite{Beck2017book,Bubeck_book,Lan2020book,Nesterov_book}, with the following time-varying step size rule
\begin{equation}\label{steps_rules}
\gamma_k = \frac{\sqrt{2 \sigma}}{M_f \sqrt{k}} \quad  k = 1, 2, \ldots, N.
\end{equation}

\begin{corollary}\label{corollary_mirror_m_minus1}
Let $f$ be an $M_f$-Lipschitz convex function, and assume $ \|\nabla f(x)\|_* \leq M_f, \forall x \in Q$. Then for problem \eqref{main_constrained_prob}, by Algorithm \ref{alg_mirror_descent}, with  $m = -1$,  $V_{\psi}(x^*, x^1) \leq \theta$,  for some $\theta >0$, and with the time-varying step size given in \eqref{steps_rules}, 
it satisfies the following sub-optimal convergence rate
\begin{equation}\label{rate_mirror_k_minus1}
f(\tilde{x})  - f(x^*) \leq \frac{M_f \left(  \theta + 1 + \log(N)  \right)}{\sqrt{\sigma}} \frac{1}{\sqrt{N}} = O\left(\frac{\log(N)}{\sqrt{N}}\right), 
\end{equation}
where $\tilde{x} = \frac{1}{\sum_{k = 1}^{N} \gamma_k} \sum_{k= 1}^{N} \gamma_k x^k$. 
\end{corollary}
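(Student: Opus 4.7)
The plan is to specialize Theorem~\ref{theo_main_ineq_mirror_desc} to the value $m=-1$ and then plug in the explicit step sizes \eqref{steps_rules}, after which the result will reduce to two standard sum estimates: a lower bound on $\sum_{k=1}^N 1/\sqrt{k}$ and an upper bound on $\sum_{k=1}^N 1/k$.

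First I would take $m=-1$ in \eqref{main_ineq_mirror_desc}. With this choice we have $\gamma_k^{-m}=\gamma_k$, $\gamma_N^{m+1}=1$, and $\gamma_k^{m-1}=\gamma_k^{-2}$, so the weighted average $\hat{x}$ collapses to $\tilde{x}=\frac{1}{\sum_{k=1}^N\gamma_k}\sum_{k=1}^N\gamma_k x^k$ and the bound reads
\begin{equation*}
f(\tilde{x})-f(x^*)\le\frac{1}{\sum_{k=1}^N\gamma_k}\left(V_\psi(x^*,x^1)+\frac{1}{2\sigma}\sum_{k=1}^N\gamma_k^2\|\nabla f(x^k)\|_*^2\right).
\end{equation*}

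Next I would insert $\gamma_k=\frac{\sqrt{2\sigma}}{M_f\sqrt{k}}$. Using the hypothesis $\|\nabla f(x^k)\|_*\le M_f$, each term $\gamma_k^2\|\nabla f(x^k)\|_*^2$ is bounded by $\frac{2\sigma}{k}$, so $\frac{1}{2\sigma}\sum_{k=1}^N\gamma_k^2\|\nabla f(x^k)\|_*^2\le\sum_{k=1}^N\frac{1}{k}\le 1+\log N$ by the classical harmonic sum estimate. Combined with $V_\psi(x^*,x^1)\le\theta$, the numerator is at most $\theta+1+\log N$.

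For the denominator, $\sum_{k=1}^N\gamma_k=\frac{\sqrt{2\sigma}}{M_f}\sum_{k=1}^N\frac{1}{\sqrt{k}}\ge\frac{\sqrt{2\sigma}}{M_f}\sqrt{N}$, using the trivial lower bound $\sum_{k=1}^N k^{-1/2}\ge N\cdot N^{-1/2}=\sqrt{N}$. Putting the pieces together yields
\begin{equation*}
f(\tilde{x})-f(x^*)\le\frac{M_f}{\sqrt{2\sigma}\sqrt{N}}\bigl(\theta+1+\log N\bigr)\le\frac{M_f(\theta+1+\log N)}{\sqrt{\sigma}\sqrt{N}},
\end{equation*}
which is \eqref{rate_mirror_k_minus1}. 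There is no real obstacle here; the entire argument is a direct substitution into \eqref{main_ineq_mirror_desc} combined with elementary sum bounds, the only mild care being to use the right direction of inequality (upper bound for $\sum 1/k$, lower bound for $\sum 1/\sqrt{k}$) and to absorb the $1/\sqrt{2}$ into the stated constant.
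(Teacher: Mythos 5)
Your proposal is correct and follows essentially the same route as the paper: specialize Theorem \ref{theo_main_ineq_mirror_desc} to $m=-1$, substitute the step sizes \eqref{steps_rules}, bound the harmonic sum by $1+\log N$, and lower-bound $\sum_{k=1}^N k^{-1/2}$. The only (harmless) difference is that you use the trivial bound $\sum_{k=1}^N k^{-1/2}\ge \sqrt{N}$ where the paper uses $\sum_{k=1}^N k^{-1/2}\ge 2\sqrt{N+1}-2$ together with $2\sqrt{2}(\sqrt{N+1}-1)\ge\sqrt{N}$; your version is simpler and in fact yields the slightly sharper constant $M_f/\sqrt{2\sigma}$ before relaxing to the stated $M_f/\sqrt{\sigma}$.
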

\begin{proof}
By setting $m = -1$ in \eqref{main_ineq_mirror_desc}, we get the following inequality
\begin{equation}\label{eq_3}
f(\tilde{x}) - f(x^*) \leq \frac{V_{\psi}(x^*, x^1) + \frac{1}{2 \sigma} \sum_{k = 1}^{N} \gamma_k^2 \|\nabla f(x^k)\|_*^2}{\sum_{k = 1}^{N} \gamma_k }, 
\end{equation}
where $\tilde{x} = \frac{1}{\sum_{k = 1}^{N} \gamma_k} \sum_{k= 1}^{N} \gamma_k x^k$. 

\noindent
When $\gamma_k = \frac{\sqrt{2 \sigma}}{M_f \sqrt{k}}, \, k = 1, 2, \ldots, N$, and since $\|\nabla f(x^k)\|_* \leq M_f$, $V_{\psi}(x^*, x^1) \leq \theta$, then by substitution in \eqref{eq_3} we find
$$
f(\tilde{x}) - f(x^*) \leq \frac{M_f}{\sqrt{2 \sigma}} \frac{\theta + \sum_{k = 1}^{N} \frac{1}{k} }{\sum_{k = 1}^{N} \frac{1}{\sqrt{k}}},
$$
where $\tilde{x} = \frac{1}{\sum_{k = 1}^{N} \frac{1}{\sqrt{k}}} \sum_{k= 1}^{N} \frac{1}{\sqrt{k}} x^k$. But
\begin{equation*}\label{eq_4}
\sum_{k = 1}^{N} \frac{1}{k} \leq 1 + \log(N), \quad \text{and} \quad  \sum_{k = 1}^{N} \frac{1}{\sqrt{k}} \geq 2 \sqrt{N+1} - 2, \quad \forall N \geq 1.
\end{equation*}
Therefore,
\begin{equation*}
    \begin{aligned}
      f(\tilde{x}) - f(x^*) &\leq  \frac{M_f}{\sqrt{2 \sigma}} \frac{\theta + 1  + \log(N)}{2 \sqrt{N+1} - 2} 
        \leq \frac{M_f}{\sqrt{ \sigma}} \frac{ 1 + \theta + \log(N)}{\sqrt{N}}, \quad \forall N \geq 1. 
    \end{aligned}
\end{equation*}
Where in the last inequality, we used the fact $2 \sqrt{2}( \sqrt{N+1} - 1) \geq \sqrt{N}, \, \forall N \geq 1$. 


\end{proof}

Also, from Theorem \ref{theo_main_ineq_mirror_desc},  {\color{black} with a special value of the parameter $m$,} we can obtain the optimal convergence rate $O\left(\frac{1}{\sqrt{N}}\right)$ of Algorithm \ref{alg_mirror_descent}, with the time-varying step size given in \eqref{steps_rules}.

\begin{corollary}\label{corollary_mirror_m_0}
Let $f$ be an $M_f$-Lipschitz convex function, and assume $ \|\nabla f(x)\|_* \leq M_f, \forall x \in Q$. Then for problem \eqref{main_constrained_prob}, by Algorithm \ref{alg_mirror_descent}, with  $m=0$, $V_{\psi}(x^*, x^1) \leq \theta$,  for some $\theta >0$, and with the time-varying step size given in \eqref{steps_rules}, it satisfies the following optimal convergence rate
\begin{equation}\label{rate_mirror_k_0}
f(\overline{x})  - f(x^*) \leq \frac{M_f \left( 2+ \theta \right)}{\sqrt{2\sigma}} \frac{1}{\sqrt{N}} = O\left(\frac{1}{\sqrt{N}}\right), 
\end{equation}
where $\overline{x} = \frac{1}{N} \sum_{k= 1}^{N} x^k$. 
\end{corollary}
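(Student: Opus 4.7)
The plan is to follow the exact same template as the proof of Corollary~\ref{corollary_mirror_m_minus1}, just with $m=0$ instead of $m=-1$, and then to notice that the bad logarithmic term comes from $\sum_{k=1}^N \gamma_k^{-m+1}$ when $m=-1$, whereas for $m=0$ the analogous sum is $\sum_{k=1}^N \gamma_k = O(\sqrt{N})$, which is exactly the right order to match the first term on the right of \eqref{main_ineq_mirror_desc}.

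First, I would specialize \eqref{main_ineq_mirror_desc} to $m=0$. Since $\gamma_k^{-m}=1$ for all $k$, the denominator equals $\sum_{k=1}^N 1 = N$ and $\hat{x}=\frac{1}{N}\sum_{k=1}^N x^k = \overline{x}$, so the inequality reads
\begin{equation*}
f(\overline{x}) - f(x^*) \leq \frac{1}{N}\left( \frac{V_{\psi}(x^*,x^1)}{\gamma_N} + \frac{1}{2\sigma}\sum_{k=1}^N \gamma_k \|\nabla f(x^k)\|_*^2\right).
\end{equation*}
Next I would plug in $\gamma_k=\sqrt{2\sigma}/(M_f\sqrt{k})$, use $\|\nabla f(x^k)\|_*\le M_f$ and $V_\psi(x^*,x^1)\le\theta$. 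The first term becomes $\tfrac{\theta}{N}\cdot \tfrac{M_f\sqrt{N}}{\sqrt{2\sigma}} = \tfrac{M_f\theta}{\sqrt{2\sigma}\,\sqrt{N}}$, and the second term simplifies to $\tfrac{M_f}{\sqrt{2\sigma}\,N}\sum_{k=1}^N \tfrac{1}{\sqrt{k}}$.

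The only analytic step is the standard estimate
\begin{equation*}
\sum_{k=1}^N \frac{1}{\sqrt{k}} \leq 2\sqrt{N}\qquad \forall N\geq 1,
\end{equation*}
which I would justify by the integral comparison (this is the place where the $m=0$ weighting pays off: the harmonic sum $\sum 1/k$ of the $m=-1$ case, responsible for $\log N$, is replaced by $\sum 1/\sqrt{k}$ of order $\sqrt{N}$). Substituting this bound turns the second term into $\tfrac{2M_f}{\sqrt{2\sigma}\,\sqrt{N}}$, and adding the two pieces gives the required $\tfrac{M_f(2+\theta)}{\sqrt{2\sigma}}\cdot\tfrac{1}{\sqrt{N}}$.

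The main obstacle is essentially non-existent: everything reduces to a careful bookkeeping of the exponents of $\gamma_k$ in \eqref{main_ineq_mirror_desc} once $m=0$ is substituted, followed by the standard $\sqrt{N}$ bound on $\sum 1/\sqrt{k}$. The conceptual point worth emphasizing in the write-up is \emph{why} this choice works, namely that with $m=0$ both the ``initial distance'' term $V_\psi(x^*,x^1)/\gamma_N$ and the ``noise'' term $\sum \gamma_k \|\nabla f(x^k)\|_*^2$ are simultaneously of order $\sqrt{N}$, while the normalizing denominator $\sum \gamma_k^{-m}=N$ grows one order faster, producing the optimal $1/\sqrt{N}$ rate without any logarithmic factor.
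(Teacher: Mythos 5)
Your proposal is correct and follows essentially the same route as the paper's own proof: specialize \eqref{main_ineq_mirror_desc} to $m=0$, substitute the step sizes \eqref{steps_rules} together with $\|\nabla f(x^k)\|_*\le M_f$ and $V_\psi(x^*,x^1)\le\theta$, and apply the bound $\sum_{k=1}^N k^{-1/2}\le 2\sqrt{N}$ to obtain the constant $M_f(2+\theta)/\sqrt{2\sigma}$. The only difference is your added (accurate) commentary on why the $m=0$ weighting removes the logarithm, which the paper leaves implicit.
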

\begin{proof}
By setting $m = 0$ in \eqref{main_ineq_mirror_desc}, we get the following inequality
\begin{equation}\label{eq_5}
f(\overline{x}) - f(x^*) \leq \frac{1}{N} \left( \frac{V_{\psi}(x^*, x^1)}{\gamma_N} + \frac{1}{2 \sigma} \sum_{k = 1}^{N} \gamma_k \|\nabla f(x^k)\|_*^2  \right), 
\end{equation}
where $\overline{x}= \frac{1}{N} \sum_{k= 1}^{N} x^k$. 

\noindent
When $\gamma_k = \frac{\sqrt{2 \sigma}}{M_f \sqrt{k}}, \, k = 1, 2, \ldots, N$, and since $\|\nabla f(x^k)\|_* \leq M_f, V_{\psi} (x^*, x^1) \leq \theta$,  then by substitution in \eqref{eq_5} we find
$$
f(\overline{x}) - f(x^*) \leq  \frac{1}{N} \left(  \frac{\theta M_f \sqrt{N} }{\sqrt{2 \sigma}} + \frac{M_f}{\sqrt{2 \sigma}} \sum_{k = 1}^{N} \frac{1}{\sqrt{k}} \right).
$$
But
\begin{equation*}\label{eq_6}
\sum_{k = 1}^{N} \frac{1}{\sqrt{k}} \leq 2 \sqrt{N}, \quad \forall N \geq 1.
\end{equation*}
Therefore,
\begin{equation*}
f(\overline{x}) - f(x^*) \leq  \frac{1}{N} \frac{M_f}{\sqrt{2 \sigma}} \left( \theta \sqrt{N}  + 2 \sqrt{N}\right) = \frac{M_f (2 + \theta)}{\sqrt{2 \sigma}}  \cdot \frac{1}{\sqrt{N}}.
\end{equation*}


\end{proof}

Now, from Theorem \ref{theo_main_ineq_mirror_desc},   with any fixed $m \geq 1$, we can obtain the optimal convergence rate of Algorithm \ref{alg_mirror_descent} $O\left(\frac{1}{\sqrt{N}}\right)$, with the time-varying step size given in \eqref{steps_rules}.

\begin{corollary}\label{corollary_mirror_m_all}
Let $f$ be an $M_f$-Lipschitz convex function, and assume $ \|\nabla f(x)\|_* \leq M_f, \forall x \in Q$. Then for problem \eqref{main_constrained_prob}, by Algorithm \ref{alg_mirror_descent}, with any $m \geq 1$, $V_{\psi}(x^*, x^1) \leq \theta$, for some $\theta >0$, and with the time-varying step size given in \eqref{steps_rules}, it satisfies the following optimal convergence rate
\begin{equation}\label{rate_mirror_m_all}
f(\hat{x})  - f(x^*) \leq \frac{M_f (m + 2) (1+ \theta)}{2 \sqrt{2 \sigma}} \cdot \frac{1}{\sqrt{N}}, 
\end{equation}
where $\hat{x} = \frac{1}{\sum_{k= 1}^{N} \gamma_k^{-m}} \sum_{k = 1}^{N} \gamma_k^{-m} x^k $. 
\end{corollary}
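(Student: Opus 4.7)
The plan is to specialize the master inequality \eqref{main_ineq_mirror_desc} of Theorem \ref{theo_main_ineq_mirror_desc} to the step sizes \eqref{steps_rules}, and then to close the argument by two elementary power-sum estimates, in direct parallel with the proofs of Corollaries \ref{corollary_mirror_m_minus1} and \ref{corollary_mirror_m_0}. First I would introduce the shorthand $A := M_f/\sqrt{2\sigma}$, so that $\gamma_k^{-j} = A^j k^{j/2}$ for any exponent $j$. Substituting this into \eqref{main_ineq_mirror_desc} and using the hypotheses $\|\nabla f(x^k)\|_* \leq M_f$ and $V_{\psi}(x^*, x^1) \leq \theta$, together with the identity $M_f^2/(2\sigma) = A^2$ (which makes the two summands in the bracket share the common prefactor $A^{m+1}$ that cancels the $A^m$ from the denominator), the bound reduces to
$$f(\hat{x}) - f(x^*) \leq \frac{A}{\sum_{k=1}^N k^{m/2}} \left(\theta\, N^{(m+1)/2} + \sum_{k=1}^N k^{(m-1)/2}\right).$$

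Next I would estimate the two power-sums. Since $m \geq 1$, the sequence $k \mapsto k^{(m-1)/2}$ is non-decreasing, so each term in the numerator sum is bounded above by $N^{(m-1)/2}$, giving $\sum_{k=1}^N k^{(m-1)/2} \leq N^{(m+1)/2}$. For the denominator, monotonicity of $x \mapsto x^{m/2}$ yields the standard integral comparison
$$\sum_{k=1}^N k^{m/2} \geq \int_0^N x^{m/2}\, dx = \frac{2}{m+2}\, N^{(m+2)/2}.$$
Plugging these two estimates into the displayed inequality gives
$$f(\hat{x}) - f(x^*) \leq \frac{A(1+\theta)(m+2)\, N^{(m+1)/2}}{2\, N^{(m+2)/2}} = \frac{M_f(m+2)(1+\theta)}{2\sqrt{2\sigma}} \cdot \frac{1}{\sqrt{N}},$$
which is precisely \eqref{rate_mirror_m_all}.

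I do not foresee any real obstacle in this plan: once the substitution is carried out, the proof collapses to a matter of choosing sharp yet simple bounds on $\sum_{k=1}^N k^{m/2}$ and $\sum_{k=1}^N k^{(m-1)/2}$. The only mild subtlety is that the integral lower bound on the denominator (which produces the decisive factor $(m+2)/2$) must be paired with the trivial term-by-term upper bound on the numerator in order to reproduce the stated constant $(m+2)(1+\theta)/2$ exactly; looser choices (for instance, an integral upper bound involving a factor $(N+1)^{(m+1)/2}$) would introduce superfluous $m$-dependent constants.
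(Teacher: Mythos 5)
Your proposal is correct and follows essentially the same route as the paper's own proof: substitute the step sizes \eqref{steps_rules} into \eqref{main_ineq_mirror_desc}, bound $\sum_{k=1}^N k^{(m-1)/2}$ term-by-term by $N^{(m+1)/2}$, and lower-bound $\sum_{k=1}^N k^{m/2}$ by the integral $\tfrac{2}{m+2}N^{(m+2)/2}$. The resulting constant matches \eqref{rate_mirror_m_all} exactly, so there is nothing to add.
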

\begin{proof} 
When $\gamma_k = \frac{\sqrt{2 \sigma}}{M_f \sqrt{k}}, \, k = 1, 2, \ldots, N$, and since $\|\nabla f(x^k)\|_* \leq M_f$, $V_{\psi}(x^*, x^1) \leq \theta$,  then by substitution in \eqref{main_ineq_mirror_desc} we find
$$
f(\hat{x}) - f(x^*) \leq \frac{M_f}{\sqrt{2 \sigma}} \cdot \frac{1}{\sum_{k = 1}^{N} \left(\sqrt{k}\right)^{m} }  \left(  \theta \left(\sqrt{N}\right)^{m+1}  +  \sum_{k = 1}^{N} \left(\sqrt{k}\right)^{m-1} 
 \right).
$$

But, for any $m \geq 1$ and $N \geq 1$, 
\begin{equation*}
\int_{0}^{N}\left(\sqrt{k}\right)^{m} dk \leq \sum_{k = 1}^{N}  \left(\sqrt{k}\right)^{m} \Longrightarrow \sum_{k = 1}^{N}  \left(\sqrt{k}\right)^{m} \geq \frac{2\left(\sqrt{N}\right)^{m+2}}{m+2},
\end{equation*}

and
$$
\sum_{k = 1}^{N} \left(\sqrt{k}\right)^{m-1} \leq N \left(\sqrt{N}\right)^{m-1} = \left(\sqrt{N}\right)^{m+1}, \quad \forall m \geq 1, N\geq 1  . 
$$

Therefore,
\begin{equation*}
\begin{aligned}
    f(\hat{x}) - f(x^*) & \leq  \frac{M_f}{\sqrt{2 \sigma}} \cdot  \frac{m + 2}{2 \left(\sqrt{N}\right)^{m+2}}  \left(  \theta \left(\sqrt{N}\right)^{m+1} + \left(\sqrt{N}\right)^{m+1}\right) 
    \\& = \frac{M_f (m + 2) (1+ \theta)}{2 \sqrt{2 \sigma}} \cdot \frac{1}{\sqrt{N}} = O \left(\frac{1}{\sqrt{N}}\right). 
\end{aligned}
\end{equation*}

\end{proof}

\begin{remark}\label{remark_k_gretear_minus1_mirror}
In comparison with the sub-optimal convergence rate \eqref{rate_mirror_k_minus1}, when $m \geq 1$, the weighting scheme $\frac{1}{\sum_{k= 1}^{N} \gamma_k^{-m}} \sum_{k = 1}^{N} \gamma_k^{-m} x^k$ assigns smaller weights to the initial points and larger weights to the most recent points that generated by Algorithm \ref{alg_mirror_descent}. This fact will be shown in numerical experiments. 
\end{remark}

\section{Mirror descent method for constrained composite problem}\label{sectinMirrorC}

In this section, we will consider a more general problem than problem \eqref{main_constrained_prob}, namely
\begin{equation}\label{main_constrained_prob_composition}
\min_{x \in Q} \left\{F(x) = f(x ) + h(x)\right\},
\end{equation}
where $Q = \operatorname{dom} (h) \cap \operatorname{dom}(\partial \psi), f$ and $h$ satisfy the following assumption
\begin{assumption}\label{assump_composite_problem}(properties of $f$ and $h$)
\begin{enumerate}
\item $f$ and $h$ are proper closed and convex;
\item $ \operatorname{dom}(h)   \subseteq  \operatorname{int} (\operatorname{dom}(f))$, where $\operatorname{int} (A)$ denotes the interior of the set $A$; 
\item  $f$ is $M_f$-Lipschitz continuous on $\operatorname{dom}(h)$ for any $M_f > 0 $.
\end{enumerate}
\end{assumption}
We also, assume that the optimal set of the problem \eqref{main_constrained_prob_composition} is nonempty.

For the problem \eqref{main_constrained_prob_composition}, with an additional assumption that $h$ is non-negative, we mention the following well-defined algorithm \cite{Beck2017book}. 

\begin{algorithm}[H]
\caption{Mirror-C Descent Method.}\label{alg_mirrorC_descent}
\begin{algorithmic}[1]
\REQUIRE step sizes $\{\gamma_k\}_{k \geq 1}$,  initial point $x^1 \in Q$ {\color{black} s.t. \eqref{cond_x1xstar} holds}, number of iterations $N$.
\FOR{$k= 1, 2, \ldots, N$}
\STATE Calculate $\nabla f(x^k) \in \partial f(x^k)$,
\STATE $x^{k+1} = \arg\min_{x \in Q} \left\{ \gamma_k \langle x, \nabla f(x^k)  \rangle + \gamma_k h(x) +  V_{\psi}(x,x^k) \right\} $.
\ENDFOR
\end{algorithmic}
\end{algorithm}

Let us mention the following key lemma for the analysis of the Algorithm \ref{alg_mirrorC_descent}. 

\begin{lemma}\cite{Beck2017book}\label{lemma_mirrorC}
Let 
\begin{enumerate}
\item $\psi: \mathbf{E} \longrightarrow (-\infty, \infty]$ be a proper closed and convex function differentiable over $\operatorname{dom}(\partial \psi)$,

\item $\varphi: \mathbf{E} \longrightarrow (-\infty, \infty]$ be a proper closed and convex function satisfying  $\operatorname{dom}(\partial \varphi) \subseteq \operatorname{dom}(\partial \psi)$,

\item $\psi + \mathbb{I}_{\operatorname{dom}(\varphi)}$ be a $\sigma$-strongly convex, where $\mathbb{I}_{A}$ is the indicator function of the set $A$. 
\end{enumerate}
Assume that $b \in \operatorname{dom}(\partial \psi)$, and let $a$ be defined by
$$
a = \arg\min_{x \in \mathbf{E}} \left\{ \varphi(x) + V_{\psi}(x, b) \right\}.
$$
Then 
we have
\begin{equation}\label{eq_main_lemma}
\left\langle \nabla \psi(b) - \nabla \psi(a), u - a \right\rangle \leq \varphi(u) - \varphi(a),	\quad \forall u \in \operatorname{dom}(\varphi).
\end{equation}
\end{lemma}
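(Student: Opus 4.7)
The plan is to obtain the inequality as a first-order optimality condition for the minimization problem defining $a$, combined with the convexity of $\varphi$. Since $V_{\psi}(\cdot, b)$ is differentiable at every point of $\operatorname{dom}(\partial \psi)$ with $\nabla_x V_{\psi}(x, b) = \nabla \psi(x) - \nabla \psi(b)$, and since $\psi + \mathbb{I}_{\operatorname{dom}(\varphi)}$ is $\sigma$-strongly convex, the objective $\Phi(x) := \varphi(x) + V_{\psi}(x, b)$ is strongly convex on $\operatorname{dom}(\varphi)$, so the minimizer $a$ exists and is unique.

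Next I would apply the subdifferential sum rule at $a$. The hypothesis $\operatorname{dom}(\partial \varphi) \subseteq \operatorname{dom}(\partial \psi)$ guarantees that once we know $\partial \varphi(a)$ is nonempty (which follows from $a$ being the minimizer of a proper convex function on its effective domain), the function $V_{\psi}(\cdot, b)$ is differentiable at $a$. Hence the sum rule applies without any qualification condition and yields
\begin{equation*}
\partial \Phi(a) = \partial \varphi(a) + \nabla \psi(a) - \nabla \psi(b).
\end{equation*}
The optimality condition $0 \in \partial \Phi(a)$ then delivers
\begin{equation*}
\nabla \psi(b) - \nabla \psi(a) \in \partial \varphi(a).
\end{equation*}

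Finally, by the defining subgradient inequality for the convex function $\varphi$, for every $u \in \operatorname{dom}(\varphi)$,
\begin{equation*}
\varphi(u) - \varphi(a) \geq \langle \nabla \psi(b) - \nabla \psi(a),\, u - a \rangle,
\end{equation*}
which is exactly \eqref{eq_main_lemma}.

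The main technical obstacle is justifying rigorously that $0 \in \partial \Phi(a)$ decomposes as $(\nabla \psi(b) - \nabla \psi(a)) \in \partial \varphi(a)$ with $a \in \operatorname{dom}(\partial \psi)$. This is where the three hypotheses of the lemma (properness/closedness of $\psi$ and $\varphi$, the inclusion $\operatorname{dom}(\partial\varphi) \subseteq \operatorname{dom}(\partial\psi)$, and the strong convexity of $\psi + \mathbb{I}_{\operatorname{dom}(\varphi)}$) do all the work: strong convexity produces the unique minimizer $a$ with nonempty $\partial \Phi(a)$, the domain inclusion places $a$ in $\operatorname{dom}(\partial \psi)$ so that $V_{\psi}(\cdot, b)$ is smooth there, and the smooth-plus-convex sum rule then applies without a Moreau--Rockafellar qualification. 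Once this decomposition is in hand, the rest is a one-line application of convexity.
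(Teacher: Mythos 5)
The paper itself offers no proof of this lemma; it is imported verbatim from \cite{Beck2017book} (it is the non-Euclidean second prox theorem there), so there is no in-paper argument to compare against. Your proof is the standard one from that source: first-order optimality for $a$, the smooth-plus-convex subdifferential sum rule to extract $\nabla \psi(b) - \nabla \psi(a) \in \partial \varphi(a)$, and the subgradient inequality for $\varphi$. The architecture is right, and everything after the decomposition is a one-line argument, as you say. The one step you assert rather than prove is exactly the one you flag as the technical obstacle, and your stated justification for it is circular: you claim $\partial \varphi(a) \neq \emptyset$ ``because $a$ is the minimizer of a proper convex function on its effective domain,'' but a minimizer of the sum $\varphi + V_{\psi}(\cdot, b)$ need not a priori be a point of subdifferentiability of the summand $\varphi$ (only $\partial\bigl(\varphi + V_{\psi}(\cdot,b)\bigr)(a) \ni 0$ is automatic). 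You need $a \in \operatorname{dom}(\partial \varphi)$ to conclude $a \in \operatorname{dom}(\partial \psi)$, which you need in order to apply the sum rule, which is what finally produces an element of $\partial \varphi(a)$ --- so the chain of implications consumes its own conclusion. In \cite{Beck2017book} this is resolved by a separate preparatory result on the well-definedness of the mirror step, which shows directly (via a line-segment / directional-derivative argument comparing $\Phi(a)$ with $\Phi(a + t(u-a))$ for $u \in \operatorname{dom}(\varphi)$ and $t \downarrow 0$) that the minimizer lies in $\operatorname{dom}(\partial \varphi) \cap \operatorname{dom}(\partial \psi)$. Citing or reproducing that result would close the loop; with it in hand, the rest of your argument is correct.
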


\begin{theorem}\label{theo_main_ineq_mirrorC}
Suppose that Assumption \ref{assump_composite_problem} holds and $h$ is a non-negative function. Then for problem \eqref{main_constrained_prob_composition}, by Algorithm \ref{alg_mirrorC_descent}, with a positive non-increasing sequence of step sizes $\{\gamma_k\}_{k \geq 1}$, for any fixed $ -1 \leq m \leq 0$, it satisfies the following inequality
\begin{equation}\label{main_ineq_mirrorC}
F(\hat{x}) - F(x^*) \leq  \frac{1}{\sum_{k= 1}^{N} \gamma_k^{-m}} \left( \frac{h(x^1)}{\gamma_1^m} +  \frac{V_{\psi}(x^*, x^1) }{\gamma_N^{m+1}} + \frac{1}{2 \sigma} \sum_{k= 1}^{N} \frac{\|\nabla f(x^k)\|_*^2}{\gamma_k^{m -1}} \right),
\end{equation}
where $\hat{x} = \frac{1}{\sum_{k= 1}^{N} \gamma_k^{-m}} \sum_{k = 1}^{N} \gamma_k^{-m} x^k $,  and $x^*$ is an optimal solution of \eqref{main_constrained_prob_composition}.
\end{theorem}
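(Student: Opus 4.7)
The plan is to mirror the proof of Theorem \ref{theo_main_ineq_mirror_desc}, inserting two modifications to accommodate the composite term: first, the optimality of $x^{k+1}$ in Algorithm \ref{alg_mirrorC_descent} is captured by Lemma \ref{lemma_mirrorC}, which produces an extra pair of $h$-terms in the one-step inequality; second, controlling these extra $h$-terms across iterations will require a summation-by-parts argument that relies crucially on both $h\geq 0$ and $m\leq 0$.

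\textbf{Per-iteration estimate.} I would apply Lemma \ref{lemma_mirrorC} with $b=x^k$, $a=x^{k+1}$, $u=x^*$, and $\varphi(x)=\gamma_k\langle x,\nabla f(x^k)\rangle+\gamma_k h(x)$, obtaining
\begin{equation*}
\langle\nabla\psi(x^k)-\nabla\psi(x^{k+1}),\, x^*-x^{k+1}\rangle \leq \gamma_k\langle\nabla f(x^k),\, x^*-x^{k+1}\rangle + \gamma_k\bigl(h(x^*)-h(x^{k+1})\bigr).
\end{equation*}
Rewriting the left side via the three-point identity \eqref{eq_three_points}, splitting $\gamma_k(f(x^k)-f(x^*))\leq \gamma_k\langle\nabla f(x^k),x^k-x^{k+1}\rangle+\gamma_k\langle\nabla f(x^k),x^{k+1}-x^*\rangle$ via convexity of $f$, bounding the first inner product by Fenchel--Young \eqref{Fenchel_Young_ineq} with $\lambda=\sigma$, and using \eqref{eq_breg} to absorb $-V_\psi(x^{k+1},x^k)$, I expect to land at
\begin{equation*}
\gamma_k\bigl(f(x^k)+h(x^{k+1})-f(x^*)-h(x^*)\bigr) \leq V_\psi(x^*,x^k)-V_\psi(x^*,x^{k+1}) + \frac{\gamma_k^2}{2\sigma}\|\nabla f(x^k)\|_*^2.
\end{equation*}

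\textbf{Weighted summation and the main obstacle.} Multiplying by $\gamma_k^{-m-1}$ and summing from $k=1$ to $N$, the left-hand side becomes $\sum_{k=1}^{N}\gamma_k^{-m}(F(x^k)-F(x^*))+\sum_{k=1}^{N}\gamma_k^{-m}(h(x^{k+1})-h(x^k))$. The mismatch between $h(x^{k+1})$ and $h(x^k)$ is the essential new difficulty and the reason this result requires more than the bookkeeping of Theorem \ref{theo_main_ineq_mirror_desc}. I would handle it by Abel summation,
\begin{equation*}
\sum_{k=1}^{N}\gamma_k^{-m}\bigl(h(x^{k+1})-h(x^k)\bigr) = \gamma_N^{-m} h(x^{N+1}) - \gamma_1^{-m} h(x^1) + \sum_{k=2}^{N}\bigl(\gamma_{k-1}^{-m}-\gamma_k^{-m}\bigr)h(x^k),
\end{equation*}
then observe that because $m\leq 0$ and $\{\gamma_k\}$ is non-increasing the weights $\gamma_k^{-m}$ are themselves non-increasing, so $\gamma_{k-1}^{-m}-\gamma_k^{-m}\geq 0$; together with $h\geq 0$ this yields the lower bound $-\gamma_1^{-m} h(x^1)$. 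Thus $\sum_{k=1}^{N}\gamma_k^{-m}(F(x^k)-F(x^*))$ is majorised by $\gamma_1^{-m} h(x^1)$ plus the two remaining right-hand sums.

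\textbf{Finish.} The Bregman portion $\sum_{k=1}^{N}\gamma_k^{-m-1}(V_\psi(x^*,x^k)-V_\psi(x^*,x^{k+1}))$ is then handled exactly as in Theorem \ref{theo_main_ineq_mirror_desc}: because $m\geq -1$ and $\{\gamma_k\}$ is non-increasing the differences $\gamma_k^{-m-1}-\gamma_{k-1}^{-m-1}$ are non-negative, so condition \eqref{cond_x1xstar} (i.e.\ $V_\psi(x^*,x^k)\leq V_\psi(x^*,x^1)$) collapses the telescoping to $\gamma_N^{-m-1}V_\psi(x^*,x^1)$. Convexity of $F$ applied to the weighted average transfers the bound from $\sum_k\gamma_k^{-m}F(x^k)$ to $(\sum_k\gamma_k^{-m})F(\hat x)$, and dividing through by $\sum_{k=1}^{N}\gamma_k^{-m}$ produces \eqref{main_ineq_mirrorC}. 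The two constraints $m\leq 0$ (for the $h$-telescoping) and $m\geq -1$ (for the Bregman telescoping) are exactly what dictate the admissible range stated in the theorem.
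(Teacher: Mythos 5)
Your proposal is correct and follows essentially the same route as the paper: the same application of Lemma \ref{lemma_mirrorC} with $\varphi(x)=\gamma_k\langle x,\nabla f(x^k)\rangle+\gamma_k h(x)$, the same per-iteration bound, and the same use of $h\geq 0$ together with the monotonicity of $\gamma_k^{-m}$ (for $m\leq 0$) to absorb the $h(x^{k+1})$/$h(x^k)$ mismatch into the single term $h(x^1)/\gamma_1^m$. Your Abel-summation packaging is just a tidier version of the paper's explicit index-shifting, and your closing remark correctly identifies that $m\leq 0$ governs the $h$-telescoping while $m\geq -1$ governs the Bregman telescoping.
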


\begin{proof}
From \eqref{eq_main_lemma}, with $b = x^k, a = x^{k+1} $ and  $\varphi(x) = \gamma_k \langle x,  \nabla f(x^k) \rangle + \gamma_k h(x)$, we get the following inequality
\begin{equation}\label{eq_s1}
\langle \nabla \psi(x^k) -  \nabla \psi(x^{k+1}), u - x^{k+1} \rangle \leq \gamma_k  \langle u - x^{k+1}, \nabla f(x^k) \rangle + \gamma_k h(u) -  \gamma_k h(x^{k+1}). 
\end{equation}
Now, from three points identity \eqref{eq_three_points}, with $a = x^{k+1}, b = x^k$ and $c = u$, we get
\begin{equation}\label{eq_s2}
\langle \nabla \psi(x^k) -  \nabla \psi(x^{k+1}), u - x^{k+1} \rangle = V_{\psi} (u, x^{k+1}) + V_{\psi} (x^{k+1}, x^k) - V_{\psi} (u, x^{k}). 
\end{equation}
By combining \eqref{eq_s1} and  \eqref{eq_s2}, we get the following inequality
\begin{equation}\label{eq_hfghfkg}
\begin{aligned}
    V_{\psi} (u, x^{k+1}) +  V_{\psi} (x^{k+1}, x^k) -  V_{\psi} (u, x^k) & \leq \gamma_k \langle u - x^{k+1}, \nabla f(x^k) \rangle 
    \\& \;\;\;\; + \gamma_k h(u) -  \gamma_k h(x^{k+1}).
\end{aligned}
\end{equation}
From \eqref{eq_hfghfkg},  we find
\begin{equation}\label{eq_dhfhsc}
\begin{aligned}
\gamma_k \langle \nabla f(x^k), x^{k} - u  \rangle  & +   \gamma_k h(x^{k+1})    - \gamma_k h(u) 
 \leq V_{\psi} (u, x^k) - V_{\psi} (u, x^{k+1})
\\& \qquad \quad - V_{\psi} (x^{k+1}, x^k)  + \langle \gamma_k \nabla f(x^k) , x^k - x^{k+1} \rangle.
\end{aligned}
\end{equation}
Therefore, by \eqref{eq_breg} and Fenchel-Young inequality \eqref{Fenchel_Young_ineq} with $\lambda = \sigma$, from \eqref{eq_dhfhsc} we get the following
\begin{equation}\label{eq_s3}
\begin{aligned}
\gamma_k \langle \nabla f(x^k), x^{k} - u  \rangle  & +   \gamma_k h(x^{k+1})    - \gamma_k h(u) 
\leq V_{\psi} (u, x^k) - V_{\psi} (u, x^{k+1})
\\& - \frac{\sigma}{2} \| x^{k+1} - x^k\|^2  + \frac{\gamma_k^2}{2 \sigma} \|\nabla f(x^k)\|_*^2
 + \frac{\sigma}{2} \| x^{k+1} - x^k\|^2.
 \\& = V_{\psi} (u, x^{k}) - V_{\psi} (u, x^{k+1})  + \frac{\gamma_k^2}{2 \sigma} \|\nabla f(x^k)\|_*^2. 
\end{aligned}
\end{equation}
Since $f$ is a convex function, it holds
$$
f(u) - f(x^k) \geq \langle \nabla f(x^k), u - x^k \rangle \Longrightarrow f(x^k) - f(u)  \leq \langle \nabla f(x^k), x^k  - u\rangle, 
$$
and by setting $u = x^*$ in \eqref{eq_s3}, we get the following inequality
$$
\begin{aligned}
    \gamma_k \left(f(x^k) - f( x^*) \right)+    \gamma_k h(x^{k+1})    - \gamma_k h(x^*)  & \leq V_{\psi} (x^*, x^k) - V_{\psi} (x^*, x^{k+1}) 
    \\& \;\;\;\; + \frac{\gamma_k^2}{2 \sigma} \|\nabla f(x^k)\|_*^2,
\end{aligned}
$$
which means 
\begin{equation}\label{eq_s8}
f(x^k)  + h(x^{k+1}) - F(x^*) \leq \frac{1}{\gamma_k} \left(V_{\psi} (x^*, x^k) - V_{\psi} (x^*, x^{k+1}) \right) + \frac{\gamma_k}{2 \sigma} \|\nabla f(x^k)\|_*^2 .
\end{equation}
By multiplying both sides of \eqref{eq_s8} by $\frac{1}{\gamma_k^m}$, and taking the sum form $1$ to $N$, we get
\begin{equation*}
\begin{aligned}
    \sum_{k = 1 }^{N} \frac{1}{\gamma_k^m}  \left(f(x^k)  + h(x^{k+1}) - F(x^*) \right) & \leq \sum_{k = 1 }^{N} \frac{1}{\gamma_k^{m+1}} \left(V_{\psi} (x^*, x^k) - V_{\psi} (x^*, x^{k+1}) \right) 
    \\& \;\;\;\; + \frac{1}{2 \sigma} \sum_{k = 1 }^{N} \frac{\|\nabla f(x^k)\|_*^2}{\gamma_k^{m-1}}  .
\end{aligned}
\end{equation*}
Now, in a similar way as in the proof of Theorem \ref{theo_main_ineq_mirror_desc}, we conclude 
\begin{equation}\label{eq_s9}
\sum_{k = 1 }^{N} \frac{1}{\gamma_k^m}  \left(f(x^k)  + h(x^{k+1}) - F(x^*) \right) \leq   \frac{V_{\psi}(x^*, x^1)}{\gamma_N^{m+1}} + \frac{1}{2 \sigma} \sum_{k = 1}^{N} \frac{\|\nabla f(x^k)\|_*^2}{\gamma_k^{m-1}}.
\end{equation}
By adding the term $\frac{h(x^1)}{\gamma_1^m} - \frac{h(x^{N+1})}{\gamma_N^m}$ to both sides of \eqref{eq_s9}, we  get 
\\ the following
\begin{equation*}
    \begin{aligned}
      & \sum_{k = 2}^{N} \frac{1}{\gamma_k^m}  \left(f(x^k)  + h(x^{k+1}) - F(x^*) \right) + \frac{f(x^1)}{\gamma_1^m} +   \frac{h(x^2)}{\gamma_1^m} - \frac{F(x^*)}{\gamma_1^m} +\frac{h(x^1)}{\gamma_1^m} 
      \\& \;\;\;\; - \frac{h(x^{N+1})}{\gamma_N^m}  \leq
      \frac{h(x^1)}{\gamma_1^m} - \frac{h(x^{N+1})}{\gamma_N^m} + \frac{V_{\psi}(x^*, x^1)}{\gamma_N^{m+1}} 
    + \frac{1}{2 \sigma} \sum_{k = 1}^{N} \frac{\|\nabla f(x^k)\|_*^2}{\gamma_k^{m-1}},
    \end{aligned}
\end{equation*}
Because of $h$ is a non-negative, we have $\frac{h(x^{N+1})}{\gamma_N^m} > 0$, and then we get the following
$$
\begin{aligned}
      & \sum_{k = 2}^{N}  \left( \frac{1}{\gamma_k^m} f(x^k)  +  \frac{1}{\gamma_k^m}  h(x^{k+1}) - \frac{1}{\gamma_k^m}  F(x^*) \right) + \frac{1}{\gamma_1^m} \left(F(x^1) - F(x^*)\right) +   \frac{h(x^2)}{\gamma_1^m}    
      \\& \;\;\;\; - \frac{h(x^{N+1})}{\gamma_N^m} \leq 
      \frac{h(x^1)}{\gamma_1^m} + \frac{V_{\psi}(x^*, x^1)}{\gamma_N^{m+1}}  + \frac{1}{2 \sigma} \sum_{k = 1}^{N} \frac{\|\nabla f(x^k)\|_*^2}{\gamma_k^{m-1}}, 
    \end{aligned}
$$
which means
$$
\begin{aligned}
      & \sum_{k = 2}^{N}  \left( \frac{1}{\gamma_k^m} f(x^k)  - \frac{1}{\gamma_k^m}  F(x^*) \right) + \sum_{k = 2}^{N}  \frac{1}{\gamma_k^m} h(x^{k+1}) + 
      \frac{1}{\gamma_1^m} \left(F(x^1) - F(x^*)\right) 
      \\& \;\;\;\; +   \frac{h(x^2)}{\gamma_1^m}  - \frac{h(x^{N+1})}{\gamma_N^m}  \leq
      \frac{h(x^1)}{\gamma_1^m} + \frac{V_{\psi}(x^*, x^1)}{\gamma_N^{m+1}}  + \frac{1}{2 \sigma} \sum_{k = 1}^{N} \frac{\|\nabla f(x^k)\|_*^2}{\gamma_k^{m-1}}. 
    \end{aligned}
$$
Thus, 
$$
\begin{aligned}
      & \sum_{k = 2}^{N}  \left( \frac{1}{\gamma_k^m} f(x^k)  - \frac{1}{\gamma_k^m}  F(x^*) \right) + \sum_{k = 3}^{N + 1}  \frac{1}{\gamma_{k-1}^m} h(x^{k}) + 
      \frac{1}{\gamma_1^m} \left(F(x^1) - F(x^*)\right) 
      \\& \;\;\;\; +   \frac{h(x^2)}{\gamma_1^m}  - \frac{h(x^{N+1})}{\gamma_N^m}  \leq
      \frac{h(x^1)}{\gamma_1^m} + \frac{V_{\psi}(x^*, x^1)}{\gamma_N^{m+1}}  + \frac{1}{2 \sigma} \sum_{k = 1}^{N} \frac{\|\nabla f(x^k)\|_*^2}{\gamma_k^{m-1}}. 
    \end{aligned}
$$
Therefore, 
$$
\begin{aligned}
      & \sum_{k = 2}^{N}  \left( \frac{1}{\gamma_k^m} f(x^k)  - \frac{1}{\gamma_k^m}  F(x^*) \right) +  
      \frac{1}{\gamma_1^m} \left(F(x^1) - F(x^*)\right) +  \sum_{k = 2}^{N}  \frac{1}{\gamma_{k-1}^m} h(x^{k}) 
       \leq
      \\& \;\;\;\; \frac{h(x^1)}{\gamma_1^m} + \frac{V_{\psi}(x^*, x^1)}{\gamma_N^{m+1}} 
       + \frac{1}{2 \sigma} \sum_{k = 1}^{N} \frac{\|\nabla f(x^k)\|_*^2}{\gamma_k^{m-1}}. 
\end{aligned}
$$
Thus, we get
$$
\begin{aligned}
      & \sum_{k = 2}^{N}  \left( \frac{1}{\gamma_k^m} f(x^k) +\frac{1}{\gamma_{k-1}^m} h(x^k)  - \frac{1}{\gamma_k^m}  F(x^*) \right) +  
      \frac{1}{\gamma_1^m} \left(F(x^1) - F(x^*)\right) 
       \leq
      \\& \;\;\;\; \frac{h(x^1)}{\gamma_1^m} + \frac{V_{\psi}(x^*, x^1)}{\gamma_N^{m+1}} 
       + \frac{1}{2 \sigma} \sum_{k = 1}^{N} \frac{\|\nabla f(x^k)\|_*^2}{\gamma_k^{m-1}}. 
    \end{aligned}
$$
Because of the sequence $\{\gamma_k\}_{k \geq 1}$ non-increasing and $-1 \leq m \leq 0$, we have $\frac{1}{\gamma_{k-1}^m} \geq \frac{1}{\gamma_{k}^m} $, and then we conclude
$$
\begin{aligned}
      & \sum_{k = 2}^{N}  \left( \frac{1}{\gamma_k^m} f(x^k) +\frac{1}{\gamma_{k}^m} h(x^k)  - \frac{1}{\gamma_k^m}  F(x^*) \right) +  
      \frac{1}{\gamma_1^m} \left(F(x^1) - F(x^*)\right) 
       \leq
     \\& \;\;\;\; \frac{h(x^1)}{\gamma_1^m} + \frac{V_{\psi}(x^*, x^1)}{\gamma_N^{m+1}} 
      + \frac{1}{2 \sigma} \sum_{k = 1}^{N} \frac{\|\nabla f(x^k)\|_*^2}{\gamma_k^{m-1}},
    \end{aligned}
$$
which means
$$
\sum_{k = 1}^{N}\frac{1}{\gamma_k^m}  \left(  F(x^k) -  F(x^*) \right) \leq 
\frac{h(x^1)}{\gamma_1^m} + \frac{V_{\psi}(x^*, x^1)}{\gamma_N^{m+1}} + \frac{1}{2 \sigma} \sum_{k = 1}^{N} \frac{\|\nabla f(x^k)\|_*^2}{\gamma_k^{m-1}}. 
$$
From the convexity of the function $F$, we have
$$
\begin{aligned}
    & \left(\sum_{k = 1}^{N}\frac{1}{\gamma_k^m}\right) \left[ F\left(\frac{1}{\sum_{k= 1}^{N} \gamma_k^{-m}} \sum_{k = 1}^{N} \gamma_k^{-m} x^k\right) - F(x^*) \right] \leq  
    \\&\;\;\;\;  \leq \sum_{k = 1}^{N}\frac{1}{\gamma_k^m}  \left(  F(x^k) -  F(x^*) \right). 
\end{aligned} 
$$
Therefore, by setting $\hat{x} = \frac{1}{\sum_{k= 1}^{N} \gamma_k^{-m}} \sum_{k = 1}^{N} \gamma_k^{-m} x^k $, we get the following inequality
\begin{equation}\label{ddd}
\left(\sum_{k = 1}^{N}\frac{1}{\gamma_k^m}\right) \left( F(\hat{x}) - F(x^*) \right)  \leq \frac{h(x^1)}{\gamma_1^m} + \frac{V_{\psi}(x^*, x^1)}{\gamma_N^{m+1}} + \frac{1}{2 \sigma} \sum_{k = 1}^{N} \frac{\|\nabla f(x^k)\|_*^2}{\gamma_k^{m-1}}.
\end{equation}
By dividing both sides of \eqref{ddd} by $\sum_{k = 1}^{N}\gamma_k^{-m}$, we get the desired inequality \eqref{main_ineq_mirrorC}. 

\end{proof}

As a special case of Theorem \ref{theo_main_ineq_mirrorC}, {\color{black} with a special value of the parameter $m$,}  we can deduce the well-known sub-optimal convergence rate of the mirror-C descent method (Algorithm \ref{alg_mirrorC_descent}), namely $O\left(\frac{\log(N)}{\sqrt{N}}\right)$, with the time-varying step size given in \eqref{steps_rules} (see \cite{Beck2017book,Bubeck_book,Lan2020book,Nesterov_book}).

\begin{corollary}\label{corollary_mirror_m_minus1_C}
Suppose that Assumption \ref{assump_composite_problem} holds and $h$ is a non-negative function. Then for problem \eqref{main_constrained_prob_composition}, by Algorithm \ref{alg_mirrorC_descent}, with  $m=-1$, $V_{\psi}(x^*, x^1) \leq \theta$,  for some $\theta >0$, and with the time-varying step size given in \eqref{steps_rules}, 
it satisfies the following sub-optimal convergence rate
\begin{equation}\label{rate_mirror_k_minus1_C}
\begin{aligned}
    F(\tilde{x})  - F(x^*) & \leq  \frac{M_f}{\sqrt{\sigma}} \left( \frac{\sqrt{2 \sigma}}{M_f} h (x^1) + \theta + 1 + \log(N) \right)  \frac{1}{\sqrt{N}} 
    \\& = O\left(\frac{\log(N)}{\sqrt{N}}\right) ,
\end{aligned} 
\end{equation}
where $\tilde{x} = \frac{1}{\sum_{k = 1}^{N} \gamma_k} \sum_{k= 1}^{N}\gamma_k x^k$. 
\end{corollary}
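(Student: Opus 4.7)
The plan is to specialize the master inequality \eqref{main_ineq_mirrorC} of Theorem \ref{theo_main_ineq_mirrorC} at $m=-1$ and then substitute the step size \eqref{steps_rules}, following exactly the template used in Corollary \ref{corollary_mirror_m_minus1}, with the only new ingredient being the extra additive term $h(x^1)/\gamma_1^{m}$ coming from the composite part.

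First I would set $m=-1$ in \eqref{main_ineq_mirrorC}. Then $\gamma_k^{-m}=\gamma_k$, $\gamma_1^{m}=\gamma_1^{-1}$, $\gamma_N^{m+1}=1$, and $\gamma_k^{m-1}=\gamma_k^{-2}$, so the inequality collapses to
\begin{equation*}
F(\tilde x)-F(x^*) \leq \frac{1}{\sum_{k=1}^{N}\gamma_k}\left(\gamma_1\, h(x^1)+V_{\psi}(x^*,x^1)+\frac{1}{2\sigma}\sum_{k=1}^{N}\gamma_k^{2}\,\|\nabla f(x^k)\|_{*}^{2}\right),
\end{equation*}
where $\tilde x=\frac{1}{\sum_{k=1}^{N}\gamma_k}\sum_{k=1}^{N}\gamma_k x^k$, which already matches the averaging scheme in the statement.

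Next I would insert $\gamma_k=\sqrt{2\sigma}/(M_f\sqrt{k})$, giving $\gamma_1 h(x^1)=\frac{\sqrt{2\sigma}}{M_f}h(x^1)$ and $\gamma_k^{2}\|\nabla f(x^k)\|_{*}^{2}\leq \frac{2\sigma}{k}$ thanks to the bound $\|\nabla f(x^k)\|_{*}\leq M_f$. Combining these with the assumption $V_{\psi}(x^*,x^1)\leq\theta$, the numerator is bounded by $\frac{\sqrt{2\sigma}}{M_f}h(x^1)+\theta+\sum_{k=1}^{N}\frac{1}{k}\leq \frac{\sqrt{2\sigma}}{M_f}h(x^1)+\theta+1+\log(N)$, using the elementary harmonic bound. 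For the denominator I would use $\sum_{k=1}^{N}\frac{1}{\sqrt{k}}\geq 2\sqrt{N+1}-2$, so $\sum_{k=1}^{N}\gamma_k\geq \frac{\sqrt{2\sigma}}{M_f}(2\sqrt{N+1}-2)$.

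Putting numerator and denominator together and then invoking the inequality $2\sqrt{2}(\sqrt{N+1}-1)\geq\sqrt{N}$ for $N\geq 1$ (the same step that closes Corollary \ref{corollary_mirror_m_minus1}) yields exactly \eqref{rate_mirror_k_minus1_C}. There is no real obstacle here: the only difference from the non-composite case is tracking the extra term $\frac{\sqrt{2\sigma}}{M_f}h(x^1)$, which sits inside the numerator and contributes an additive constant under the final $M_f/\sqrt{\sigma}$ factor; the logarithmic term arises, as in the classical analysis, solely from $\sum 1/k$ in the gradient-norm sum.
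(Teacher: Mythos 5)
Your proposal is correct and follows exactly the same route as the paper's proof: specialize \eqref{main_ineq_mirrorC} at $m=-1$ to get the inequality with numerator $\gamma_1 h(x^1)+V_{\psi}(x^*,x^1)+\frac{1}{2\sigma}\sum_{k=1}^{N}\gamma_k^2\|\nabla f(x^k)\|_*^2$, substitute the step size \eqref{steps_rules}, bound the harmonic sum by $1+\log(N)$ and the denominator below by $2\sqrt{N+1}-2$, and finish with $2\sqrt{2}(\sqrt{N+1}-1)\geq\sqrt{N}$. All the intermediate computations (in particular $\gamma_N^{m+1}=1$ and the extra term $\frac{\sqrt{2\sigma}}{M_f}h(x^1)$) match the paper.
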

\begin{proof}
By setting $m = -1$ in \eqref{main_ineq_mirrorC}, we get the following inequality
\begin{equation}\label{eqqq}
F(\tilde{x}) - F(x^*) \leq \frac{ \gamma_1 h(x^1) +  V_{\psi}(x^*, x^1) + \frac{1}{2 \sigma} \sum_{k = 1}^{N} \gamma_k^2 \|\nabla f(x^k)\|_*^2}{\sum_{k = 1}^{N} \gamma_k }, 
\end{equation}
where $\tilde{x} = \frac{1}{\sum_{k = 1}^{N} \gamma_k} \sum_{k= 1}^{N}\gamma_k x^k $. 

\noindent
When $\gamma_k = \frac{\sqrt{2 \sigma}}{M_f \sqrt{k}}, \, k = 1, 2, \ldots, N$, and since $\|\nabla f(x^k)\|_* \leq M_f, V_{\psi}(x^*, x^1) \leq \theta$, then by substitution in \eqref{eqqq}, with $\tilde{x} = \frac{1}{\sum_{k = 1}^{N} \frac{1}{\sqrt{k}}} \sum_{k= 1}^{N} \frac{1}{\sqrt{k}} x^k$,  we find
\begin{equation*}
    \begin{aligned}
    F(\tilde{x}) - F(x^*) & \leq \frac{M_f}{\sqrt{2 \sigma}} \frac{ \frac{\sqrt{2 \sigma}}{M_f} h(x^1) +  \theta + \sum_{k = 1}^{N} \frac{1}{k} }{\sum_{k = 1}^{N} \frac{1}{\sqrt{k}}}
    \\& \leq  \frac{M_f}{\sqrt{2 \sigma}} \frac{ \frac{\sqrt{2 \sigma}}{M_f} h(x^1) +1+ \theta + \log(N)}{2 \sqrt{N+1} - 2} 
      \\&  \leq \frac{M_f}{\sqrt{ \sigma}} \frac{ \frac{\sqrt{2 \sigma}}{M_f} h(x^1) + 1+ \theta  + \log(N)}{\sqrt{N}}. 
    \end{aligned}
\end{equation*}
Where in the last inequality, we used the fact $2 \sqrt{2}( \sqrt{N+1} - 1) \geq \sqrt{N}, \, \forall N \geq 1$.

\end{proof}

Also, from Theorem \ref{theo_main_ineq_mirrorC}, {\color{black} with a special value of the parameter $m$,}  we can obtain the optimal convergence rate $O\left(\frac{1}{\sqrt{N}}\right)$ of Algorithm \ref{alg_mirrorC_descent}, with the time-varying step size given in \eqref{steps_rules}.

\begin{corollary}\label{corollary_mirror_m_0_C}
Suppose that Assumption \ref{assump_composite_problem} hold and $h$ is a non-negative function, with $h(x^1) = 0$. Then for problem \eqref{main_constrained_prob_composition}, by Algorithm \ref{alg_mirrorC_descent}, with $m = 0$, $V_{\psi}(x^*, x^1) \leq \theta$,  for some $\theta >0$, and with the time-varying step size given in \eqref{steps_rules}, it satisfies the following optimal convergence rate
\begin{equation*}\label{rate_mirror_k_0_C}
F(\overline{x})  - F(x^*) \leq  \frac{ (2 + \theta)M_f }{\sqrt{2\sigma}} \cdot \frac{1}{\sqrt{N}} = O\left(\frac{1}{\sqrt{N}}\right), 
\end{equation*}
where $\overline{x} = \frac{1}{N} \sum_{k= 1}^{N} x^k$.
\end{corollary}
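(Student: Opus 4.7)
The plan is to specialize the master inequality \eqref{main_ineq_mirrorC} of Theorem \ref{theo_main_ineq_mirrorC} to the case $m = 0$, and then substitute the explicit step size \eqref{steps_rules}; no new estimation work is required beyond what was already used in Corollary \ref{corollary_mirror_m_0}.

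First, I would set $m = 0$ in \eqref{main_ineq_mirrorC}. Under this choice the weighting collapses: $\sum_{k=1}^{N} \gamma_k^{-m} = N$ and $\hat{x} = \frac{1}{N}\sum_{k=1}^{N} x^k = \overline{x}$. The term $\frac{h(x^1)}{\gamma_1^{m}}$ equals $h(x^1)$, which vanishes by the hypothesis $h(x^1) = 0$. Consequently, the inequality reduces to
\begin{equation*}
F(\overline{x}) - F(x^*) \leq \frac{1}{N}\left( \frac{V_{\psi}(x^*, x^1)}{\gamma_N} + \frac{1}{2 \sigma} \sum_{k=1}^{N} \gamma_k \|\nabla f(x^k)\|_*^2 \right),
\end{equation*}
which is structurally identical to inequality \eqref{eq_5} from the proof of Corollary \ref{corollary_mirror_m_0}.

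Next, I would substitute $\gamma_k = \frac{\sqrt{2\sigma}}{M_f \sqrt{k}}$, use the bound $\|\nabla f(x^k)\|_* \leq M_f$ (which follows from the $M_f$-Lipschitz continuity of $f$), and invoke $V_{\psi}(x^*, x^1) \leq \theta$. These substitutions give $1/\gamma_N = M_f \sqrt{N}/\sqrt{2\sigma}$ and $\gamma_k \|\nabla f(x^k)\|_*^2 \leq M_f \sqrt{2\sigma}/\sqrt{k}$, so the right-hand side becomes
\begin{equation*}
\frac{1}{N}\left( \frac{\theta M_f \sqrt{N}}{\sqrt{2\sigma}} + \frac{M_f}{\sqrt{2\sigma}} \sum_{k=1}^{N} \frac{1}{\sqrt{k}} \right).
\end{equation*}

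Finally, I would apply the elementary estimate $\sum_{k=1}^{N} 1/\sqrt{k} \leq 2\sqrt{N}$ for all $N \geq 1$ (the same one used in Corollary \ref{corollary_mirror_m_0}) to obtain the desired bound $F(\overline{x}) - F(x^*) \leq \frac{(2+\theta) M_f}{\sqrt{2\sigma}} \cdot \frac{1}{\sqrt{N}}$. There is no genuine obstacle here: the statement is a direct corollary whose only subtlety is that the hypothesis $h(x^1)=0$ is precisely what is needed to kill the extra composite term $h(x^1)/\gamma_1^{m}$ in \eqref{main_ineq_mirrorC}, which would otherwise scale like a constant and remain present when $m=0$ (as opposed to the $m=-1$ case in Corollary \ref{corollary_mirror_m_minus1_C}, where that term is absorbed into the averaging denominator).
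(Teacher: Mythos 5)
Your proposal is correct and follows essentially the same route as the paper: set $m=0$ in \eqref{main_ineq_mirrorC}, use $h(x^1)=0$ to drop the composite term, substitute the step size \eqref{steps_rules} together with $\|\nabla f(x^k)\|_*\leq M_f$ and $V_{\psi}(x^*,x^1)\leq\theta$, and finish with $\sum_{k=1}^{N}1/\sqrt{k}\leq 2\sqrt{N}$. Your closing remark about why $h(x^1)=0$ is needed precisely when $m=0$ is an accurate observation that the paper leaves implicit.
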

\begin{proof}
By setting $m = 0$ in \eqref{main_ineq_mirrorC}, and since $h(x^1) = 0$,  we get the following inequality
\begin{equation}\label{eee}
F(\overline{x}) - F(x^*) \leq \frac{1}{N} \left( \frac{V_{\psi}(x^*, x^1)}{\gamma_N} + \frac{1}{2 \sigma} \sum_{k = 1}^{N} \gamma_k \|\nabla f(x^k)\|_*^2 \right), 
\end{equation}
where $\overline{x} = \frac{1}{N} \sum_{k= 1}^{N}  x^k$. 

\noindent
When $\gamma_k = \frac{\sqrt{2 \sigma}}{M_f \sqrt{k}}, \, k = 1, 2, \ldots, N$, and since $\|\nabla f(x^k)\|_* \leq M_f, V_{\psi}(x^*, x^1) \leq \theta$, then by substitution in \eqref{eee} we find
\begin{equation*}
    \begin{aligned}
    F(\overline{x}) - F(x^*) & \leq  \frac{1}{N} \left( \frac{\theta M_f \sqrt{N} }{\sqrt{2 \sigma}} + \frac{M_f}{\sqrt{2 \sigma}} \sum_{k = 1}^{N} \frac{1}{\sqrt{k}} \right)
    \leq \frac{1}{N} \frac{M_f}{\sqrt{2 \sigma}} \left(\theta \sqrt{N} + 2 \sqrt{N}\right)
   \\& = \frac{M_f(2 + \theta)}{\sqrt{2 \sigma}} \cdot \frac{1}{\sqrt{N}}.
    \end{aligned}
\end{equation*}
\end{proof}



{\color{black}
\section{Mirror descent method for problems with functional constraints}\label{section_functio_const}

In this section, we focus on a more general class of problems, which is a class of optimization problems with functional constraints (inequality constraints)
\begin{equation}\label{general_problem_1}
\min\left\{f(x): \;\; x \in Q \;\;\text{and} \;\; g_i(x)\leq 0 \;\; \text{for all} \;\; i = 1 , 2, \ldots, p \right\},
\end{equation}
where $f$ and $g_i$ (for all $i =1, \ldots, p$) are convex and non-smooth functions given on a compact convex set $Q$, this means that they are Lipschitz functions, i.e., there is $M_f > 0$ and  $M_{g_i} > 0  \; (i = 1, 2, \ldots, p)$, such that $\|\nabla f(x)\|_* \leq M_f$ and $\|\nabla g_i(x)\|_* \leq M_{g_i} (i = 1, 2, \ldots, p)$. 

It is clear that instead of a set of functionals  $\{g_i(\cdot)\}_{i=1}^{p}$ we can see one functional constraint $g: Q \longrightarrow \mathbb{R}$, such that $g(x) = \max_{1 \leq i \leq p}\{g_i(x)\}$. Therefore, by this setting, the problem \eqref{general_problem_1} will be equivalent to the following constrained minimization problem  
\begin{equation}\label{general_problem}
 \min\limits_{x\in Q,\, g(x) \leq 0} f(x).
\end{equation} 

Let $x^*$ be a solution of \eqref{general_problem}, we say that $\hat{x} \in Q$ is an $\varepsilon$-solution to \eqref{general_problem} if $f(\hat{x}) - f(x^*) \leq \varepsilon$ and $g(\hat{x}) \leq \varepsilon$. 

To solve problem \eqref{general_problem}, we propose Algorithm \ref{alg_new_constraints} which represents a generalization of Algorithm \ref{alg_mirror_descent} to solve the general problem \eqref{general_problem}. 

The output point (output of Algorithm \ref{alg_new_constraints}) will be selected among the points $x^k$ for which $g(x^k) \leq \varepsilon$. Therefore, we will call step $k$ productive if $g(x^k) \leq \varepsilon$. If the reverse inequality $g(x^k) > \varepsilon$ holds then step $k$ will be called non-productive. Let $I$ and $J$ denote the set of productive and non-productive steps, respectively. $|I|$ and $|J|$ denote the number of productive and non-productive steps, respectively. Let us also set $\gamma_k := \gamma_k^f$ if $k \in I$, $\gamma_k := \gamma_k^g$ if $k \in J$, and $M = \max\{M_f, M_g\}$, where $M_g = \max_{1 \leq i \leq p} \{M_{g_i}\}$.

\begin{algorithm}[!ht]
\caption{Mirror descent method for problems with functional constraints.}\label{alg_new_constraints}
\begin{algorithmic}[1]
\REQUIRE $\varepsilon>0$, initial point $x^1  \in Q$, step sizes $\{\gamma_k^f\}_{k \geq 1}, \{\gamma_k^g\}_{k \geq 1}$, number of iterations $N$.
\STATE $I \longrightarrow \emptyset, J \longrightarrow \emptyset. $
\FOR{$k= 1, 2, \ldots, N$}
\IF{$g(x^k) \leq \varepsilon$}
\STATE Calculate $\nabla f(x^k) \in \partial f(x^k)$,
\STATE $x^{k+1} = \arg\min_{x \in Q} \left\{ \langle x, \nabla f(x^k)  \rangle + \frac{1}{\gamma_k^f} V_{\psi}(x,x^k) \right\} $.
\STATE  $k \longrightarrow I$  \qquad "productive step"
\ELSE
\STATE Calculate $\nabla g(x^k) \in \partial g(x^k)$,
\STATE $x^{k+1} = \arg\min_{x \in Q} \left\{ \langle x, \nabla g(x^k)  \rangle + \frac{1}{\gamma_k^g} V_{\psi}(x,x^k) \right\} $.
\STATE  $k \longrightarrow J$  \qquad "non-productive step"
\ENDIF
\ENDFOR
\end{algorithmic}
\end{algorithm}

For Algorithm \ref{alg_new_constraints}, we have the following result.

\begin{theorem}\label{theorem_alg3}
Let $f$ and $g$ are Lipschitz convex functions with constants $M_f>0$ and $M_g >0$, respectively. Assume that $\|\nabla f(x)\|_* \leq M_f$ and $\|\nabla g(x)\|_* \leq M_g$, for all $x \in Q$. Then for problem \eqref{general_problem}, with positive non-increasing sequences of step sizes $\{\gamma_k^f\}_{k \geq 1}, \{\gamma_k^g\}_{k \geq 1}$, for any fixed $m \geq -1$, and $\varepsilon > 0 $,  after $N$ iterations of Algorithm \ref{alg_new_constraints}, it satisfies the following inequality
\begin{equation}\label{main_ineq_alg3}
\begin{aligned}
f(\hat{x}) - f(x^*)  & <  \frac{1}{\sum_{k \in I}   \left(\gamma_k^f\right)^{-m}} \Bigg(  \frac{V_{\psi}(x^*, x^1) }{\gamma_N^{m+1}} 
  + \frac{1}{2 \sigma} \sum_{k \in I} \frac{\|\nabla f(x^k)\|_*^2}{\left(\gamma_k^{f}\right)^{m-1}}
 \\&  \qquad \qquad \qquad \qquad \quad +  \frac{1}{2 \sigma} \sum_{k \in J} \frac{\|\nabla g(x^k)\|_*^2}{\left(\gamma_k^{g}\right)^{m-1}} {\color{black}  - \varepsilon \sum_{k \in J} \left(\gamma_k^g\right)^{-m}  } \Bigg),
\end{aligned}
\end{equation}
where $\hat{x} = \frac{1}{\sum\limits_{k \in I} \left(\gamma_k^f\right)^{-m}} \sum_{k \in I} \left(\gamma_k^f\right)^{-m} x^k $,  and $x^*$ is an optimal solution of \eqref{general_problem}.
\end{theorem}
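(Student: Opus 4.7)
My plan is to adapt the one-step analysis of Theorem~\ref{theo_main_ineq_mirror_desc} to handle both productive and non-productive iterations simultaneously, and then exploit the defining property of a non-productive step ($g(x^k) > \varepsilon$) together with feasibility of $x^*$ ($g(x^*) \leq 0$) to generate the extra $-\varepsilon\sum_{k\in J}(\gamma_k^g)^{-m}$ term on the right-hand side.

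First I would repeat the derivation leading to inequality~\eqref{eq_2} in the proof of Theorem~\ref{theo_main_ineq_mirror_desc} twice, once with the subgradient step driven by $\nabla f(x^k)$ (for $k\in I$) and once with the subgradient step driven by $\nabla g(x^k)$ (for $k\in J$). This uses the optimality condition for $x^{k+1}$, the three-point identity~\eqref{eq_three_points}, Fenchel--Young~\eqref{Fenchel_Young_ineq} with $\lambda=\sigma$, and the strong convexity bound~\eqref{eq_breg}, all of which are insensitive to which function is used. The outcome is, for $k\in I$,
\begin{equation*}
f(x^k)-f(x^*) \leq \tfrac{1}{\gamma_k^f}\bigl(V_\psi(x^*,x^k)-V_\psi(x^*,x^{k+1})\bigr) + \tfrac{\gamma_k^f}{2\sigma}\|\nabla f(x^k)\|_*^2,
\end{equation*}
and for $k\in J$, using $g(x^k)-g(x^*)>\varepsilon$ on the left,
\begin{equation*}
\varepsilon < \tfrac{1}{\gamma_k^g}\bigl(V_\psi(x^*,x^k)-V_\psi(x^*,x^{k+1})\bigr) + \tfrac{\gamma_k^g}{2\sigma}\|\nabla g(x^k)\|_*^2.
\end{equation*}

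Next I would multiply the productive inequality by $(\gamma_k^f)^{-m}$ and the non-productive one by $(\gamma_k^g)^{-m}$ and sum over all $k$. Moving the $\varepsilon$ term to the right introduces exactly the subtracted quantity $-\varepsilon\sum_{k\in J}(\gamma_k^g)^{-m}$ appearing in the claim, while the squared-norm terms split cleanly into the two sums over $I$ and $J$ that the theorem advertises. The only piece that requires care is the aggregated Bregman sum $\sum_{k=1}^N \gamma_k^{-(m+1)}\bigl(V_\psi(x^*,x^k)-V_\psi(x^*,x^{k+1})\bigr)$, where $\gamma_k$ is the per-step step size (whichever of $\gamma_k^f,\gamma_k^g$ was actually used). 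I would bound this exactly as in Theorem~\ref{theo_main_ineq_mirror_desc}: apply Abel summation, discard the non-positive terminal term $-V_\psi(x^*,x^{N+1})/\gamma_N^{m+1}$, use the assumption $V_\psi(x^*,x^k)\leq V_\psi(x^*,x^1)$ coming from~\eqref{cond_x1xstar}, and telescope the remaining increments to land on $V_\psi(x^*,x^1)/\gamma_N^{m+1}$.

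Finally I would apply convexity of $f$ to the productive indices only:
\begin{equation*}
\Bigl(\sum_{k\in I}(\gamma_k^f)^{-m}\Bigr)\bigl(f(\hat x)-f(x^*)\bigr) \leq \sum_{k\in I}(\gamma_k^f)^{-m}\bigl(f(x^k)-f(x^*)\bigr),
\end{equation*}
with $\hat x$ exactly as defined in the theorem, and then divide through by $\sum_{k\in I}(\gamma_k^f)^{-m}$ to obtain~\eqref{main_ineq_alg3}. The strict inequality in the conclusion is inherited from the strict inequality $g(x^k)-g(x^*)>\varepsilon$ on non-productive steps.

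The main obstacle I anticipate is the telescoping step when $\gamma_k^f$ and $\gamma_k^g$ are allowed to be genuinely different sequences: the combined step-size sequence $\{\gamma_k\}$ may not be monotone even when $\{\gamma_k^f\}$ and $\{\gamma_k^g\}$ separately are, so the Abel-summation bound $V_\psi(x^*,x^k)\bigl(\gamma_k^{-(m+1)}-\gamma_{k-1}^{-(m+1)}\bigr)\leq V_\psi(x^*,x^1)\bigl(\gamma_k^{-(m+1)}-\gamma_{k-1}^{-(m+1)}\bigr)$ must be justified either by using the uniform bound $V_\psi(x^*,x^k)\leq V_\psi(x^*,x^1)$ for both signs of the increment or by restricting to the natural case $\gamma_k^f=\gamma_k^g$; I expect the cleanest route is to observe that the bound $V_\psi(x^*,x^k)\leq V_\psi(x^*,x^1)$ from~\eqref{cond_x1xstar} is uniform in $k$, which lets the Abel rearrangement absorb any non-monotonicity at the price of taking the maximum of $1/\gamma_k^{m+1}$, i.e., $1/\gamma_N^{m+1}$, exactly as in the stated bound.
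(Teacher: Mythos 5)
Your proposal follows essentially the same route as the paper's proof: the same per-step inequalities for productive and non-productive iterations, the same use of $g(x^k)-g(x^*)\geq g(x^k)>\varepsilon$ to produce the subtracted term, the same Abel/telescoping bound via \eqref{cond_x1xstar}, and the same convexity step restricted to $I$. The one obstacle you flag — that the interleaved sequence $\{\gamma_k\}$ need not be monotone even when $\{\gamma_k^f\}$ and $\{\gamma_k^g\}$ separately are — is a genuine subtlety that the paper itself passes over with ``in a similar way as in the proof of Theorem \ref{theo_main_ineq_mirror_desc}''; note, however, that your proposed patch is not airtight as stated, since replacing $V_{\psi}(x^*,x^k)$ by the larger $V_{\psi}(x^*,x^1)$ preserves the upper bound only on terms where the increment $\gamma_k^{-(m+1)}-\gamma_{k-1}^{-(m+1)}$ is nonnegative, so for genuinely different step-size sequences the sum of positive increments need not collapse to $\gamma_N^{-(m+1)}$.
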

\begin{proof}
Similar to what was done in proving Theorem \ref{theo_main_ineq_mirror_desc}, we find that for all $k \in I$, 
\begin{equation}\label{ineq_productive}
\frac{f(x^k) - f(x^*)}{\left(\gamma_k^f\right)^m}  \leq     \frac{V_{\psi}(x^*, x^k) -V_{\psi}(x^*, x^{k+1}) }{\left(\gamma_k^f\right)^{m + 1}}  + \frac{\|\nabla f(x^k)\|_*^2}{2 \sigma \left(\gamma_k^f\right)^{m - 1}},
\end{equation}
and for all $k \in J$, 
\begin{equation}\label{ineq_nonproductive}
\frac{g(x^k) - g(x^*)}{\left(\gamma_k^g\right)^m}  \leq     \frac{V_{\psi}(x^*, x^k) -V_{\psi}(x^*, x^{k+1}) }{\left(\gamma_k^g\right)^{m + 1}}  + \frac{\|\nabla g(x^k)\|_*^2}{2 \sigma \left(\gamma_k^g\right)^{m - 1}}.
\end{equation}

By taking the summation, in each side of \eqref{ineq_productive} and \eqref{ineq_nonproductive}, over productive and non-productive steps, with $\gamma_k = \gamma_k^f$ if $k \in I$ and $\gamma_k = \gamma_k^g$ if $k \in J$, we get
$$
\begin{aligned}
    & \sum_{k \in I} \left(\gamma_k^f\right)^{-m} \left( f(x^k) - f(x^*) \right) + \sum_{k \in J} \left(\gamma_k^g\right)^{-m} \left( g(x^k) - g(x^*) \right)  
\\& \leq \sum_{k = 1}^{N}\frac{1}{\gamma_k^{m+1}} \left( V_{\psi} (x^*, x^{k}) - V_{\psi} (x^*, x^{k+1}) \right) +\frac{1}{2 \sigma} \sum_{k \in I} \frac{\|\nabla f(x^k)\|_*^2}{ \left(\gamma_k^f\right)^{m - 1}}
    \\& \;\;\;\; + \frac{1}{2 \sigma} \sum_{k \in J} \frac{\|\nabla g(x^k)\|_*^2}{ \left(\gamma_k^g\right)^{m - 1}}.
\end{aligned}
$$

Also, in a similar way as in the proof of Theorem \ref{theo_main_ineq_mirror_desc}, and since for any $k \in J$, it holds
\begin{equation}\label{eq_nonprod}
g(x^k) - g(x^*) \geq g(x^k) > \varepsilon > 0,
\end{equation}
we get
$$
\begin{aligned}
     \sum_{k \in I} \left(\gamma_k^f\right)^{-m} \left( f(x^k) - f(x^*) \right) & < \frac{1}{\gamma_N^{m+1}}V_{\psi} (x^*, x^1)+     \frac{1}{2 \sigma} \sum_{k \in I} \frac{\|\nabla f(x^k)\|_*^2}{ \left(\gamma_k^f\right)^{m - 1}} 
    \\&  \;\;\;\;  +  \frac{1}{2 \sigma} \sum_{k \in J} \frac{\|\nabla g(x^k)\|_*^2}{ \left(\gamma_k^g\right)^{m - 1}} {\color{black} - \varepsilon \sum_{k \in J} \left(\gamma_k^g\right)^{-m}  }.
\end{aligned}
$$

Since $f$ is a convex function, we have 
$$
\begin{aligned}
    \left(\sum_{k \in I} \left(\gamma_k^f\right)^{-m}\right) & \left[ f\left(\frac{1}{\sum_{k \in I} \left(\gamma_k^f\right)^{-m}} \sum_{k \in I} \left(\gamma_k^f\right)^{-m} x^k\right) - f(x^*) \right] \leq
    \\& \qquad \qquad \qquad \qquad \qquad \leq \sum_{k \in I} \left(\gamma_k^f\right)^{-m} \left( f(x^k) - f(x^*) \right) . 
\end{aligned}
$$

Therefore, by setting $\hat{x} := \frac{1}{\sum\limits_{k \in I} \left(\gamma_k^f\right)^{-m}} \sum_{k \in I} \left(\gamma_k^f\right)^{-m} x^k $, we get
\begin{equation}\label{eqqqq}
\begin{aligned}
\left(\sum_{k \in I} \left(\gamma_k^f\right)^{-m}\right)  \left( f(\hat{x}) - f(x^*) \right) & < \frac{1}{\gamma_N^{m+1}}V_{\psi} (x^*, x^1)+     \frac{1}{2 \sigma} \sum_{k \in I} \frac{\|\nabla f(x^k)\|_*^2}{ \left(\gamma_k^f\right)^{m - 1}} 
\\&  \;\;\;\;  +  \frac{1}{2 \sigma} \sum_{k \in J} \frac{\|\nabla g(x^k)\|_*^2}{ \left(\gamma_k^g\right)^{m - 1}} {\color{black} - \varepsilon \sum_{k \in J} \left(\gamma_k^g\right)^{-m} }.
\end{aligned}
\end{equation}

By dividing both sides of \eqref{eqqqq} by $\sum_{k \in I} \left(\gamma_k^f\right)^{-m}$, we get the desired inequality \eqref{main_ineq_alg3}. 

\bigskip 

{\color{black}
Now, let us show that $|I| \ne 0$. For this, let us assume that $|I| = 0$, therefore $|J| = N$,  i.e., all steps are non-productive. 
From \eqref{eq_nonprod}, we get
\begin{equation}\label{refdg12}
\sum_{k = 1}^{N} \frac{g(x^k) - g(x^*)}{\gamma_{k}^m} > \sum_{k = 1}^{N} \frac{\varepsilon}{\gamma_{k}^m} = \frac{\varepsilon M^m}{\left(\sqrt{2 \sigma}\right)^{m}} \sum_{k = 1}^{N} \left(\sqrt{k}\right)^m, 
\end{equation}
and for all $k \in J = \{1, \ldots, N\}$ with the assumption that $V_{\psi}(x^*, x^1) \leq \theta$ for some $\theta >0$,  we get
\begin{equation*}
\begin{aligned}
\sum_{k = 1}^{N} \frac{g(x^k) - g(x^*)}{\gamma_{k}^m}  & \leq \frac{\theta}{\gamma_N^{m+1}} + \frac{1}{2 \sigma} \sum_{k = 1}^{N} \frac{\|\nabla g(x^k)\|_*^2}{\gamma_k^{m-1}}
\\& \leq \frac{M^{m+1}}{\left(\sqrt{2 \sigma}\right)^{m+1}} \left(\theta \left(\sqrt{N}\right)^{m+1} + \sum_{k = 1}^{N} \left(\sqrt{k}\right)^{m-1} \right).
\end{aligned}
\end{equation*}

But,  one can numerically verify that for a sufficiently big number of iterations $N$ (dependently on suitable values of the parameters $\theta > 0, m \geq -1, M> 0, \varepsilon > 0, \sigma > 0 $),  the following inequality holds
\begin{equation}\label{eq_nnnn}
    \frac{M^{m+1}}{\left(\sqrt{2 \sigma}\right)^{m+1}} \left(\theta \left(\sqrt{N}\right)^{m+1} + \sum_{k = 1}^{N} \left(\sqrt{k}\right)^{m-1} \right) < \frac{\varepsilon M^m}{\left(\sqrt{2 \sigma}\right)^{m}} \sum_{k = 1}^{N} \left(\sqrt{k}\right)^m. 
\end{equation}
Therefore, we get 
$$
\sum_{k = 1}^{N} \frac{g(x^k) - g(x^*)}{\gamma_{k}^m}  < \frac{\varepsilon M^m}{\left(\sqrt{2 \sigma}\right)^{m}} \sum_{k = 1}^{N} \left(\sqrt{k}\right)^m.
$$
So, we have a contradiction with \eqref{refdg12}. This means that $|I| \ne 0$. 
} 
\end{proof}

{\color{black} 
\begin{remark}
Note that the reverse inequality of \eqref{eq_nnnn}, i.e., 
$$
\sum_{k = 1}^{N} \left(\sqrt{k}\right)^m \leq \frac{M}{\varepsilon \sqrt{2 \sigma}} \left(\theta \left(\sqrt{N}\right)^{m+1} + \sum_{k = 1}^{N} \left(\sqrt{k}\right)^{m-1}\right), 
$$
for any $m \geq -1, M>0, \theta >0, \sigma > 0$ and $\varepsilon \leq \frac{M}{\sqrt{2 \sigma}}$,   holds for at least $N = 1$. This means that by choosing $\varepsilon \leq \frac{M}{\sqrt{2 \sigma}} \, (\forall M>, \sigma > 0)$, we have at least one productive step for any $m \geq -1$ and $\theta > 0 $.   
\end{remark}
}

}

\begin{remark}[Stopping criterion for Algorithm \ref{alg_new_constraints}]
Let $x^1 \in Q$ be an initial point of Algorithm \ref{alg_new_constraints}, such that $V_{\psi}(x^*, x^1) \leq \theta_1$ for some $\theta_1 >0$. From inequality \eqref{eqqqq} (which is equivalent to the inequality \eqref{main_ineq_alg3}),  with $\gamma_q = \gamma_q^f$ if $q \in I$ and $\gamma_q = \gamma_q^g$ if $q \in J$,  with 
\begin{equation}\label{out_hatx}
    \hat{x} = \frac{1}{\sum_{i \in I} \left(\gamma_i^f\right)^{-m}} \sum_{i \in I} \left(\gamma_i^f\right)^{-m} x^i,
\end{equation}
for any $k \geq 1$, we find 
\begin{equation*}
\begin{aligned}
& \left(\sum_{i \in I} \left(\gamma_i^f\right)^{-m}\right)  \left( f(\hat{x}) - f(x^*) \right) 
< 
\frac{1}{\gamma_k^{m+1}}V_{\psi} (x^*, x^1)+     \frac{1}{2 \sigma} \sum_{i \in I} \frac{\|\nabla f(x^i)\|_*^2}{ \left(\gamma_i^f\right)^{m - 1}} 
\\& \qquad \qquad \qquad \qquad  +  \frac{1}{2 \sigma} \sum_{j \in J} \frac{\|\nabla g(x^j)\|_*^2}{ \left(\gamma_j^g\right)^{m - 1}}  - \varepsilon \sum_{i = 1}^{k} \left(\gamma_i\right)^{-m}  + \varepsilon \sum_{i \in I} \left(\gamma_i^f\right)^{-m} 
\\& \quad \qquad 
\leq  \varepsilon \sum_{i \in I} \left(\gamma_i^f\right)^{-m} - \Bigg( \varepsilon \sum_{i = 1}^{k} \left(\gamma_i\right)^{-m} - \frac{\theta_1}{\gamma_k^{m+1}} - \frac{1}{2 \sigma}\sum_{i \in I} \frac{\|\nabla f(x^i)\|_*^2}{ \left(\gamma_i^f\right)^{m - 1}} 
\\& \qquad \qquad \qquad \qquad \qquad \qquad \qquad \qquad \qquad \qquad \quad \; 
-   \frac{1}{2 \sigma}\sum_{j \in J} \frac{\|\nabla g(x^j)\|_*^2}{ \left(\gamma_j^g\right)^{m - 1}}
\Bigg).
\end{aligned}
\end{equation*}

From this, without relying on prior knowledge of the number of iterations that the algorithm performs, we can set for any $k \geq 1$,
\begin{equation}\label{stop_criter_alg3}
\varepsilon \sum_{i = 1}^{k} \frac{1}{\gamma_i^m} \geq 
\frac{\theta_1}{\gamma_k^{m+1}} + \frac{1}{2 \sigma}\sum_{i \in I} \frac{\|\nabla f(x^i)\|_*^2}{ \left(\gamma_i^f\right)^{m - 1}} + \frac{1}{2 \sigma}\sum_{j \in J} \frac{\|\nabla g(x^j)\|_*^2}{ \left(\gamma_j^g\right)^{m - 1}},
\end{equation}
as a stopping criterion of Algorithm \ref{alg_new_constraints}. As a result, we conclude 
$$   
\left(\sum_{i \in I} \left(\gamma_i^f\right)^{-m}\right)  \left( f(\hat{x}) - f(x^*) \right) <   \varepsilon \sum_{i \in I} \left(\gamma_i^f\right)^{-m}, 
$$
i.e., $f(\hat{x}) - f(x^*) \leq \varepsilon$. 

For all $i \in I$ it holds that $g(x^i) \leq \varepsilon$, and since $g$ is convex, then we have 
$$
g(\hat{x}) \leq \frac{1}{\sum_{i \in I} \left(\gamma_i^f\right)^{-m}} \sum_{i \in I} \left(\gamma_i^f\right)^{-m} g(x^i) \leq \varepsilon.
$$

Thus after the stopping criterion \eqref{stop_criter_alg3} is met we find that $\hat{x}$, which is given in \eqref{out_hatx}, represents an $\varepsilon$-solution to problem \eqref{general_problem}. 
\end{remark}

Now let us analyze the convergence of Algorithm \ref{alg_new_constraints}, by taking the following time-varying step size rules
\begin{equation}\label{steps_rulse_alg3}
\gamma_k = 
\begin{cases}
\gamma_k^f : = \frac{\sqrt{2 \sigma}}{M_f \sqrt{k}}, \quad  \text{if} \; k \in I, \\
\gamma_k^g : = \frac{\sqrt{2 \sigma}}{M_g \sqrt{k}}, \quad  \text{if} \; k \in J.
\end{cases}
\end{equation}

Let us assume that $V_{\psi} (x^*, x^1) \leq \theta_1, $ for some $\theta_1 > 0$, $M: = \max \{M_f, M_g\}$. By using \eqref{steps_rulse_alg3}, and since $\|\nabla f(x^k)\|_* \leq M_f \leq M$ and $\|\nabla g(x^k)\|_* \leq M_g \leq M$, then for any $m \geq 1 $, from Theorem \ref{theorem_alg3} we have
$$
\begin{aligned}
f(\hat{x}) - f(x^*)  & < \frac{\left(\sqrt{2\sigma}\right)^{m}}{M^m \sum_{k \in I} \left(\sqrt{k}\right)^{m}} \Bigg(  \frac{\theta_1 M^{m+1} \left(\sqrt{N}\right)^{m+1} }{\left(\sqrt{2\sigma}\right)^{m+1}} + 
\\& \qquad \qquad \qquad \qquad \qquad \qquad + \frac{1}{2 \sigma} \sum_{k = 1}^{N} \frac{ M^{m+1} \left(\sqrt{k}\right)^{m-1} }{\left(\sqrt{2\sigma}\right)^{m-1}} \Bigg)
\\& = \frac{M}{\sqrt{2\sigma}} \cdot \frac{1}{\sum_{k \in I} \left(\sqrt{k}\right)^m} \cdot \left(\theta_1 \left(\sqrt{N}\right)^{m+1} + \sum_{k = 1}^{N} \left(\sqrt{k}\right)^{m-1} \right) 
\\& \leq \frac{M}{\sqrt{2 \sigma}} \cdot\frac{1}{\sum_{k \in I} \left(\sqrt{k}\right)^m} \cdot \left( \theta_1 \left(\sqrt{N}\right)^{m+1} + N \left(\sqrt{N}\right)^{m-1} \right)
\\& = \frac{M(1+ \theta_1) \left(\sqrt{N}\right)^{m+1}}{\sqrt{2 \sigma}} \cdot\frac{1}{\sum_{k \in I} \left(\sqrt{k}\right)^m}. 
\end{aligned}
$$

Now, by setting $\frac{M(1+ \theta_1) \left(\sqrt{N}\right)^{m+1}}{\sqrt{2 \sigma}} \cdot\frac{1}{\sum_{k \in I} \left(\sqrt{k}\right)^m} \leq \varepsilon$ and since $|I| \leq N$, we get 
$$
\frac{M(1+ \theta_1) \left(\sqrt{N}\right)^{m+1}}{\sqrt{2 \sigma} N\left(\sqrt{N}\right)^m} \leq \frac{M(1+ \theta_1) \left(\sqrt{N}\right)^{m+1}}{\sqrt{2 \sigma}} \cdot\frac{1}{\sum_{k \in I} \left(\sqrt{k}\right)^m} \leq  \varepsilon.
$$
Thus, 
$$
\frac{M (1 + \theta_1)}{\sqrt{N} \sqrt{2\sigma}}  \leq \varepsilon \quad \Longrightarrow \quad N \geq \frac{M^2 (1 + \theta_1)^2}{2 \sigma \varepsilon^2}. 
$$

Hence, we can formulate the following result.
\begin{corollary}
Let $f$ and $g$ are Lipschitz convex functions with constants $M_f>0$ and $M_g >0$, respectively. Assume that $\|\nabla f(x)\|_* \leq M_f$ and $\|\nabla g(x)\|_* \leq M_g$, for all $x \in Q$, and $V_{\psi}(x^*, x^1) \leq \theta_1, $ for some $\theta_1 >0$, and let $\varepsilon >0, M := \max \{M_f, M_g\}$. Then after 
\begin{equation}\label{iter_bound_alg3}
    N = \left\lceil \frac{M^2 (1+  \theta_1)^2}{2 \sigma \varepsilon^2} \right\rceil = O \left(\frac{1}{\varepsilon^2}\right)
\end{equation}
iterations of Algorithm \ref{alg_new_constraints}, for any fixed $m \geq 1$, with step size rules given in \eqref{steps_rulse_alg3}, it satisfies 
$$
f(\hat{x}) - f(x^*)< \varepsilon, \quad \text{and} \quad g(\hat{x}) \leq \varepsilon, 
$$
where $\hat{x} = \frac{1}{\sum_{k \in I} \left(\gamma_k^f\right)^{-m}} \sum\limits_{k \in I} \left(\gamma_k^f\right)^{-m} x^k$.
\end{corollary}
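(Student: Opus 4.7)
The plan is to instantiate Theorem \ref{theorem_alg3} at the time-varying step sizes \eqref{steps_rulse_alg3} with a fixed $m \geq 1$, and reduce the resulting bound to an explicit function of $N$ alone. First, I would substitute $\gamma_k^f = \sqrt{2\sigma}/(M_f\sqrt{k})$ and $\gamma_k^g = \sqrt{2\sigma}/(M_g\sqrt{k})$ into \eqref{main_ineq_alg3}, invoke the gradient bounds $\|\nabla f(x^k)\|_* \leq M_f \leq M$ and $\|\nabla g(x^k)\|_* \leq M_g \leq M$, use the hypothesis $V_{\psi}(x^*, x^1) \leq \theta_1$, and drop the non-positive term $-\varepsilon \sum_{k \in J}(\gamma_k^g)^{-m}$. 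After the powers of $M$ and $\sqrt{2\sigma}$ absorb, I expect a bound of the shape
\begin{equation*}
f(\hat{x}) - f(x^*) < \frac{M}{\sqrt{2\sigma}} \cdot \frac{\theta_1 (\sqrt{N})^{m+1} + \sum_{k=1}^{N} (\sqrt{k})^{m-1}}{\sum_{k \in I} (\sqrt{k})^{m}}.
\end{equation*}

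Next, I would simplify each piece. Since $m \geq 1$ the map $k \mapsto (\sqrt{k})^{m-1}$ is non-decreasing, so $\sum_{k=1}^{N}(\sqrt{k})^{m-1} \leq N(\sqrt{N})^{m-1} = (\sqrt{N})^{m+1}$, and the two numerator terms collapse into $(1+\theta_1)(\sqrt{N})^{m+1}$. For the denominator, I would use $|I| \leq N$ together with $(\sqrt{k})^{m} \leq (\sqrt{N})^{m}$ for $k \in I \subseteq \{1,\ldots,N\}$, giving $\sum_{k \in I}(\sqrt{k})^{m} \leq N(\sqrt{N})^{m}$. Combining and requiring the resulting right-hand side to be $\leq \varepsilon$ leads to $M(1+\theta_1)/\sqrt{2\sigma N} \leq \varepsilon$, which rearranges to the announced complexity $N \geq M^2(1+\theta_1)^2/(2\sigma\varepsilon^2)$ in \eqref{iter_bound_alg3}.

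The feasibility statement $g(\hat{x}) \leq \varepsilon$ comes essentially for free: by construction of Algorithm \ref{alg_new_constraints}, every productive index $k \in I$ satisfies $g(x^k) \leq \varepsilon$, and $\hat{x}$ is a convex combination of those iterates with weights proportional to $(\gamma_k^f)^{-m}$, so convexity of $g$ yields $g(\hat{x}) \leq \varepsilon$ directly.

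The delicate step is the denominator estimate: upper-bounding $\sum_{k \in I}(\sqrt{k})^{m}$ controls its reciprocal from below, which is the opposite direction from what is needed for an upper bound on $f(\hat{x}) - f(x^*)$. The argument therefore has to be read carefully to ensure that the chain of inequalities really certifies sufficiency of $N$; Theorem \ref{theorem_alg3} already guarantees $|I| \neq 0$ whenever $\varepsilon \leq M/\sqrt{2\sigma}$, and the final write-up must confirm that the productive indices accumulate enough mass for the residual bound to drop below $\varepsilon$, rather than only recovering the corresponding necessary condition.
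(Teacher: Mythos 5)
Your proposal reproduces the paper's own derivation essentially step for step: substitute the step sizes \eqref{steps_rulse_alg3} into \eqref{main_ineq_alg3}, drop the term $-\varepsilon\sum_{k\in J}(\gamma_k^g)^{-m}$, bound the numerator by $(1+\theta_1)(\sqrt{N})^{m+1}$, and then invoke $|I|\leq N$ to replace $\sum_{k\in I}(\sqrt{k})^m$ by $N(\sqrt{N})^m$. The ``delicate step'' you flag in your final paragraph is in fact a genuine logical gap, and it is present in the paper's argument in exactly the same form. Bounding $\sum_{k\in I}(\sqrt{k})^m$ \emph{from above} by $N(\sqrt{N})^m$ bounds the error estimate \emph{from below}; the resulting chain only shows that $N\geq M^2(1+\theta_1)^2/(2\sigma\varepsilon^2)$ is \emph{necessary} for the displayed upper bound to drop below $\varepsilon$, not that this many iterations \emph{suffice} to guarantee $f(\hat{x})-f(x^*)<\varepsilon$. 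If $I$ happens to be a small subset of $\{1,\ldots,N\}$ (say a single early index), the quantity $\frac{M(1+\theta_1)(\sqrt{N})^{m+1}}{\sqrt{2\sigma}\sum_{k\in I}(\sqrt{k})^m}$ can be arbitrarily large no matter how large $N$ is, so the argument as written does not certify the conclusion. Since you have only sketched the same chain and explicitly deferred the resolution to ``the final write-up,'' the proof is not complete.

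The missing idea is that the discarded term $-\varepsilon\sum_{k\in J}(\gamma_k^g)^{-m}$ is precisely what compensates for a possibly small productive set: one should keep it and show that, for $N$ as in \eqref{iter_bound_alg3} (possibly with an $m$-dependent constant), the right-hand side of \eqref{eqqqq} is at most $\varepsilon\sum_{k=1}^{N}\gamma_k^{-m}-\varepsilon\sum_{k\in J}(\gamma_k^g)^{-m}=\varepsilon\sum_{k\in I}(\gamma_k^f)^{-m}$, after which dividing by $\sum_{k\in I}(\gamma_k^f)^{-m}$ gives $f(\hat{x})-f(x^*)<\varepsilon$ regardless of how the steps split between $I$ and $J$. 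This requires a \emph{lower} bound on the full sum, e.g.\ $\sum_{k=1}^{N}(\sqrt{k})^m\geq\frac{2(\sqrt{N})^{m+2}}{m+2}$ as used in Corollary \ref{corollary_mirror_m_all}, and it is exactly the mechanism behind the stopping criterion \eqref{stop_criter_alg3} and the $|I|\neq 0$ argument at the end of the proof of Theorem \ref{theorem_alg3}. Your observation that the necessary and sufficient directions have been conflated is the right diagnosis; the proof needs that extra step to be carried out rather than noted.
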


The estimate \eqref{iter_bound_alg3}, is optimal for the class of non-smooth optimization problems under consideration. 


\bigskip 

Now, by setting $m = 0$ in \eqref{main_ineq_alg3}, with $\overline{x} = \frac{1}{|I|} \sum_{k \in I} x^k$, we get
$$
\begin{aligned}
f(\overline{x}) - f(x^*) & < \frac{1}{|I|} \left(\frac{V_{\psi}(x^*, x^1)}{\gamma_N} + \sum_{k \in I} \frac{\|\nabla f(x^k)\|_*^2 \gamma_k^f}{2 \sigma} + \sum_{k \in J} \frac{\|\nabla g(x^k)\|_*^2 \gamma_k^g}{2 \sigma}    \right) ,
\\&  \leq \frac{1}{|I|} \left( \frac{M \theta_1 \sqrt{N}}{\sqrt{2 \sigma}} + \frac{M}{\sqrt{2 \sigma}} \sum_{k \in I} \frac{1}{\sqrt{k}} +\frac{M}{\sqrt{2 \sigma}} \sum_{k \in J} \frac{1}{\sqrt{k}}  \right)
\\& = \frac{M}{|I| \sqrt{2\sigma}} \left(\theta_1 \sqrt{N} + \sum_{k = 1}^{N} \frac{1}{\sqrt{k}}\right)  \leq \frac{M}{|I| \sqrt{2\sigma}} \left(\theta_1 \sqrt{N} + 2 \sqrt{N}\right)
\\& = \frac{M \sqrt{N} (2 + \theta_1)}{|I| \sqrt{2\sigma}} .
\end{aligned}
$$

Now, by setting $\frac{M \sqrt{N} (2 + \theta_1)}{|I| \sqrt{2\sigma}}  \leq \varepsilon$ and since $|I| \leq N$, we get 
$$
\frac{M (2 + \theta_1)}{\sqrt{N} \sqrt{2\sigma}}  \leq \varepsilon \quad \Longrightarrow \quad N \geq \frac{M^2 (2 + \theta_1)^2}{2 \sigma \varepsilon^2}.
$$

Hence, for $m=0$,  we can formulate the following result.

\begin{corollary}
Let $f$ and $g$ are Lipschitz convex functions with constants $M_f>0$ and $M_g >0$, respectively. Assume that $\|\nabla f(x)\|_* \leq M_f$ and $\|\nabla g(x)\|_* \leq M_g$, for all $x \in Q$, and $V_{\psi}(x^*, x^1) \leq \theta_1, $ for some $\theta_1 >0$, and let $\varepsilon >0, M := \max \{M_f, M_g\}$. Then after 
$$
    N = \left\lceil \frac{M^2 (2+  \theta_1)^2}{2 \sigma \varepsilon^2} \right\rceil = O \left(\frac{1}{\varepsilon^2}\right),
$$
iterations of Algorithm \ref{alg_new_constraints}, with  $m = 0$, and step size rules given in \eqref{steps_rulse_alg3}, it satisfies 
$$
    f(\overline{x}) - f(x^*) < \varepsilon, \quad \text{and} \quad g(\overline{x}) \leq \varepsilon, 
$$
where $\overline{x} = \frac{1}{|I|} \sum_{k \in I} x^k$.
\end{corollary}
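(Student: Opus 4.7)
The plan is to execute the computation laid out in the paragraph immediately preceding the corollary as a formal proof. I would proceed in four steps.

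First, I would specialize Theorem \ref{theorem_alg3} to $m=0$. The weighted average $\hat{x}$ then collapses to the uniform average $\overline{x} = \frac{1}{|I|}\sum_{k\in I} x^k$, and dropping the nonpositive term $-\varepsilon|J|$ on the right-hand side of \eqref{main_ineq_alg3} (which only loosens the bound while preserving the strict inequality) yields
$$ f(\overline{x}) - f(x^*) < \frac{1}{|I|}\left(\frac{V_\psi(x^*,x^1)}{\gamma_N} + \frac{1}{2\sigma}\sum_{k\in I}\|\nabla f(x^k)\|_*^2\gamma_k^f + \frac{1}{2\sigma}\sum_{k\in J}\|\nabla g(x^k)\|_*^2\gamma_k^g\right). $$

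Second, I would substitute the step sizes from \eqref{steps_rulse_alg3}, apply $\|\nabla f(x^k)\|_*,\|\nabla g(x^k)\|_*\leq M$, use $V_\psi(x^*,x^1)\leq \theta_1$, and invoke the elementary bound $\sum_{k=1}^{N} k^{-1/2}\leq 2\sqrt{N}$ after merging the sums over $I$ and $J$ into a single sum over $\{1,\dots,N\}$. A routine simplification reproduces the intermediate estimate
$$ f(\overline{x}) - f(x^*) \leq \frac{M\sqrt{N}\,(2+\theta_1)}{|I|\,\sqrt{2\sigma}}. $$

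Third, I would use $|I|\leq N$ to pass to the worst-case bound $\frac{M(2+\theta_1)}{\sqrt{N}\,\sqrt{2\sigma}}$, then impose the requirement that this quantity be at most $\varepsilon$. Solving yields the advertised complexity $N\geq M^2(2+\theta_1)^2/(2\sigma\varepsilon^2)$, which gives the $O(1/\varepsilon^2)$ bound. For the feasibility statement, I would invoke convexity of $g$: since $g(x^k)\leq\varepsilon$ for every $k\in I$ by the definition of a productive step, Jensen's inequality applied to the uniform convex combination yields $g(\overline{x})\leq \frac{1}{|I|}\sum_{k\in I} g(x^k)\leq \varepsilon$ directly.

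The argument is largely bookkeeping once Theorem \ref{theorem_alg3} is in hand; there is no conceptual obstacle. The two delicate points are (i) discarding the $-\varepsilon|J|$ term in a way that preserves the strict inequality, and (ii) correctly combining the two subgradient sums over $I$ and $J$ into a single sum over $\{1,\dots,N\}$ so that the elementary estimate $\sum k^{-1/2}\leq 2\sqrt{N}$ can be applied to close the bound.
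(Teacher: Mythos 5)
Your proposal follows essentially the same route as the paper: specialize inequality \eqref{main_ineq_alg3} to $m=0$, drop the $-\varepsilon|J|$ term, merge the sums over $I$ and $J$ into $\sum_{k=1}^{N}k^{-1/2}\le 2\sqrt{N}$, arrive at the bound $\frac{M\sqrt{N}(2+\theta_1)}{|I|\sqrt{2\sigma}}$, and then use $|I|\le N$ together with convexity of $g$ for feasibility. The argument, including the step where $|I|\le N$ is invoked to pass to $\frac{M(2+\theta_1)}{\sqrt{N}\sqrt{2\sigma}}\le\varepsilon$, matches the paper's derivation in the paragraph preceding the corollary.
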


\begin{remark}
By setting $m = -1$ in  \eqref{main_ineq_alg3}, we deduce the sub-optimal convergence rate for Algorithm \ref{alg_new_constraints}. In this case, with $\tilde{x} = \frac{1}{\sum_{k \in I} \frac{1}{\sqrt{k}}} \sum_{k \in I}\frac{1}{\sqrt{k}} x^k$, we have
$$
\begin{aligned}
f(\tilde{x}) - f(x^*) & \leq \frac{M}{\sqrt{2 \sigma}} \cdot \frac{1}{\sum_{k \in I} \sqrt{k}} \cdot \left( \theta_1 + \sum_{k = 1}^{N} \frac{1}{k}\right)
\\& \leq\frac{M}{\sqrt{2 \sigma}} \cdot \frac{1}{2 \sqrt{|I| + 1 } - 2} \cdot \left( \theta_1 + 1 + \log (N)\right)
\\& \leq \frac{M \left( \theta_1 + 1 + \log (N)\right) }{ \sqrt{\sigma} \sqrt{|I|}}. 
\end{aligned}
$$
The presence of the logarithm in the last inequalities is what causes the sub-optimality in the convergence of Algorithm \ref{alg_new_constraints} for $m = -1$. 
\end{remark}

\subsection{Modification of mirror descent method for problems with many functional constraints}

For solving problem \eqref{general_problem_1}, when we have several functional constraints, we can modify Algorithm \ref{alg_new_constraints} in a similar way as in \cite{stonyakin2018}. When we have a non-productive step $k$, i.e., it holds that $g(x^k) > \varepsilon$,  then instead of calculating a (sub)gradient of the functional constraint with max-type $g(x) = \max_{1 \leq i \leq p} \{g_i(x)\}$, we calculate a (sub)gradient of one functional $g_i$, for which we have $g_i(x^k) > \varepsilon$. This idea of the proposed modification allows for saving the running time of the algorithm due to consideration of not all functional constraints (when their number is big) on non-productive steps. The proposed modification is listed as Algorithm \ref{alg_new_constraints_modif} below.

\begin{algorithm}[htp]
\caption{Modification of mirror descent method for problems with many functional constraints.}\label{alg_new_constraints_modif}
\begin{algorithmic}[1]
\REQUIRE $\varepsilon>0$, initial point $x^1  \in Q$, $M = \max \{M_f, M_g\}$, $\theta_1 > 0$ such that $V_{\psi} (x^*, x^1) \leq \theta_1$, $m \geq -1$, $\sigma > 0$.  
\STATE $I \longrightarrow \emptyset, J \longrightarrow \emptyset. $
\STATE Set $k = 0$
\REPEAT
\IF{$g_i(x^k) \leq \varepsilon, \; \forall i = 1, 2, \ldots, p$}
\STATE $L_k = \|\nabla f(x^k)\|_*,$
\STATE $\gamma_k^f = \frac{\sqrt{2 \sigma}}{L_k \sqrt{k}},$
\STATE $x^{k+1} = \arg\min_{x \in Q} \left\{ \langle x, \nabla f(x^k)  \rangle + \frac{1}{\gamma_k^f} V_{\psi}(x,x^k) \right\} ,$
\STATE  $k \longrightarrow I$  \qquad "productive step"
\ELSE
\STATE{(i.e., $\exists q = q(k) \in \{1, 2, \ldots, p\}$, s.t., $g_{q(k)} > \varepsilon$)}
\STATE $L_k = \|\nabla g_{q(k)}(x^k)\|_*,$
\STATE $\gamma_k^{g_{q(k)}} = \frac{\sqrt{2 \sigma}}{L_k \sqrt{k}}, $
\STATE $x^{k+1} = \arg\min_{x \in Q} \left\{ \langle x, \nabla g_{q(k)}(x^k)  \rangle + \frac{1}{\gamma_k^{g_{q(k)}}} V_{\psi}(x, x^k) \right\}, $
\STATE  $k \longrightarrow J$  \qquad "non-productive step"
\ENDIF
\UNTIL{
\begin{align*}
    \varepsilon \sum_{i = 1}^{k} \left(\frac{L_i \sqrt{i}}{\sqrt{2\sigma}}\right)^m \geq \theta_1 \left(\frac{M \sqrt{k}}{\sqrt{2 \sigma}}\right)^{m+1} &+ \frac{1}{\left(\sqrt{2\sigma}\right)^{m+1}}  \Bigg(\sum_{i \in I} \left(\sqrt{i}\right)^{m-1} \|\nabla f(x^i)\|_*^{m+1} 
    \\& \qquad \qquad + \sum_{j \in J}\left(\sqrt{j}\right)^{m-1} \|\nabla g_{q(j)}(x^j)\|_*^{m+1}  \Bigg).
\end{align*}
}
\ENSURE $\hat{x} = \frac{1}{\sum_{i \in I} \left(\gamma_i^f\right)^{-m}} \sum\limits_{i \in I} \left(\gamma_i^f\right)^{-m} x^i. $
\end{algorithmic}
\end{algorithm}

For Algorithm \ref{alg_new_constraints_modif}, we have the following result.

\begin{theorem}\label{theorem_alg4}
Let $f$ and $g_i \, (i = 1, 2, \ldots, p)$ are Lipschitz convex functions with constants $M_f>0$ and $M_{g_i} >0 \, (i = 1, 2, \ldots, p)$, respectively. Assume that, for all $x \in Q$, we have $\|\nabla f(x)\|_* \leq M_f$ and $\|\nabla g_i(x)\|_* \leq M_g \, \forall i = 1, 2, \ldots, p$, where $M_g = \max_{1 \leq i \leq p} \{M_{g_i}\}$, and $V_{\psi} (x^*, x^1) \leq \theta_1$, for some $\theta_1 > 0$, and let $\varepsilon >0$, $M = \max \{M_f, M_g\}$. Then for problem \eqref{general_problem_1}, after stopping Algorithm \ref{alg_new_constraints_modif}, for any fixed $m \geq -1$, it satisfies the following inequalities
\begin{equation}\label{main_ineq_alg4}
f(\hat{x}) - f(x^*) < \varepsilon, \quad \text{and} \quad g_i(\hat{x}) \leq \varepsilon, \, \forall i = 1, \ldots, p. 
\end{equation}
where $\hat{x} = \frac{1}{\sum_{i \in I} \left(\gamma_i^f\right)^{-m}} \sum\limits_{i \in I} \left(\gamma_i^f\right)^{-m} x^i $,  and $x^*$ is an optimal solution of \eqref{general_problem_1}.
\end{theorem}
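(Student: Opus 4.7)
The strategy is to mirror the argument of Theorem \ref{theorem_alg3}, but on each non-productive step $k \in J$ we replace the max-type constraint $g$ by the particular selected violating functional $g_{q(k)}$. First, I would reproduce the one-step analysis of Theorem \ref{theo_main_ineq_mirror_desc}. For each productive step $k \in I$ this yields
\begin{equation*}
\frac{f(x^k)-f(x^*)}{(\gamma_k^f)^m} \;\leq\; \frac{V_\psi(x^*,x^k)-V_\psi(x^*,x^{k+1})}{(\gamma_k^f)^{m+1}}+\frac{\|\nabla f(x^k)\|_*^2}{2\sigma(\gamma_k^f)^{m-1}},
\end{equation*}
and for each non-productive step $k \in J$ the same calculation, but with $g_{q(k)}$ in place of $f$, gives the analogous bound. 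Summing both families of inequalities over $k=1,\dots,N$ (where $N$ is the iteration at which the stopping test triggers), telescoping the Bregman terms with the help of the hypothesis $V_\psi(x^*,x^1)\leq\theta_1$, and using that $x^*$ is feasible so $g_{q(k)}(x^*)\leq 0<\varepsilon<g_{q(k)}(x^k)$ on each $k\in J$, I obtain
\begin{equation*}
\sum_{i\in I}(\gamma_i^f)^{-m}\bigl(f(x^i)-f(x^*)\bigr) \;<\; \frac{\theta_1}{\gamma_N^{m+1}} + \frac{1}{2\sigma}\sum_{i\in I}\frac{\|\nabla f(x^i)\|_*^2}{(\gamma_i^f)^{m-1}} + \frac{1}{2\sigma}\sum_{j\in J}\frac{\|\nabla g_{q(j)}(x^j)\|_*^2}{(\gamma_j^{g_{q(j)}})^{m-1}} - \varepsilon\sum_{j\in J}(\gamma_j^{g_{q(j)}})^{-m}.
\end{equation*}

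Second, I would rewrite this bound using the explicit step-size formula $\gamma_i = \sqrt{2\sigma}/(L_i\sqrt{i})$, which substitutes each $1/(\gamma_i)^{m-1}$ and $1/(\gamma_i)^m$ by expressions in $L_i, \sqrt{i}$ and $\sqrt{2\sigma}$. The gradient-norm contributions become exactly $\|\nabla f(x^i)\|_*^{m+1}(\sqrt{i})^{m-1}/(\sqrt{2\sigma})^{m+1}$ and similarly for $g_{q(j)}$, which are the two sums appearing on the right-hand side of the stopping criterion. For the leading $\theta_1/\gamma_N^{m+1}$ term I would use $L_N\leq M$ together with $m+1\geq 0$ to bound it by $\theta_1(M\sqrt{N})^{m+1}/(\sqrt{2\sigma})^{m+1}$, which matches the first term of the stopping criterion with $k=N$.

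Third, the stopping criterion then gives
\begin{equation*}
\sum_{i\in I}(\gamma_i^f)^{-m}\bigl(f(x^i)-f(x^*)\bigr) \;<\; \varepsilon\sum_{i=1}^{N}\frac{1}{\gamma_i^m} - \varepsilon\sum_{j\in J}(\gamma_j^{g_{q(j)}})^{-m} \;=\; \varepsilon\sum_{i\in I}(\gamma_i^f)^{-m},
\end{equation*}
and the convexity of $f$, applied to the convex combination defining $\hat{x}$, yields $f(\hat{x})-f(x^*)<\varepsilon$. For the constraint bounds, I observe that by construction every productive step satisfies $g_i(x^k)\leq\varepsilon$ for all $i=1,\dots,p$ simultaneously; therefore, by convexity of each $g_i$,
\begin{equation*}
g_i(\hat{x}) \;\leq\; \frac{1}{\sum_{k\in I}(\gamma_k^f)^{-m}}\sum_{k\in I}(\gamma_k^f)^{-m} g_i(x^k) \;\leq\; \varepsilon \qquad(i=1,\dots,p).
\end{equation*}

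Finally, I must verify that $|I|\neq 0$ so that $\hat{x}$ is well defined. The plan is the same contradiction argument as at the end of the proof of Theorem \ref{theorem_alg3}: if every step were non-productive, then bounding $\sum_{k=1}^N\gamma_k^{-m}(g_{q(k)}(x^k)-g_{q(k)}(x^*))$ from below by $\varepsilon\sum_{k=1}^N\gamma_k^{-m}$ and from above by the Bregman+gradient side yields, for sufficiently large $N$, a violation of the stopping inequality; since the algorithm only terminates when the stopping criterion holds, this forces at least one productive step before termination. The main obstacle is precisely this bookkeeping step: keeping track of which exponents of $L_i$, $\sqrt{i}$ and $\sqrt{2\sigma}$ go where, and showing that the upper bound coming from the one-step inequalities is majorized termwise by the right-hand side appearing in the \texttt{until} clause of Algorithm \ref{alg_new_constraints_modif}.
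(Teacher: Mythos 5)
Your plan follows essentially the same route as the paper's own proof: the same per-step inequalities with $g_{q(k)}$ substituted for $g$ on non-productive steps, the same telescoping and use of $g_{q(j)}(x^j)>\varepsilon\geq g_{q(j)}(x^*)+\varepsilon$, the same rewriting of the bound in terms of $L_i,\sqrt{i},\sqrt{2\sigma}$ so that it is majorized by the \texttt{until} clause, and the same convexity arguments for $f(\hat{x})$ and each $g_i(\hat{x})$. The only addition is your final verification that $|I|\neq 0$, which the paper omits in this proof (deferring to the analogous argument in Theorem \ref{theorem_alg3}); this is a reasonable and welcome piece of extra care rather than a deviation in approach.
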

\begin{proof}

Similar to what was done in proving Theorem \ref{theorem_alg3}, we find that for all $i \in I$, 
\begin{equation}\label{ineq_productive_alg4}
\frac{f(x^i) - f(x^*)}{\left(\gamma_i^f\right)^m}  \leq     \frac{V_{\psi}(x^*, x^i) -V_{\psi}(x^*, x^{i+1}) }{\left(\gamma_i^f\right)^{m + 1}}  + \frac{\|\nabla f(x^i)\|_*^2}{2 \sigma \left(\gamma_i^f\right)^{m - 1}},
\end{equation}
and for all $j \in J$, there is $q(j) \in \{1, 2, \ldots, p\}$, such that
\begin{equation}\label{ineq_nonproductive_alg4}
\frac{g(x^j) - g(x^*)}{\left(\gamma_j^{g_{q(j)}}\right)^m}  \leq     \frac{V_{\psi}(x^*, x^j) -V_{\psi}(x^*, x^{j+1}) }{\left(\gamma_j^{g_{q(j)}}\right)^{m + 1}}  + \frac{\|\nabla g_{q(j)}(x^j)\|_*^2}{2 \sigma \left(\gamma_j^{g_{q(j)}}\right)^{m - 1}}.
\end{equation}

By taking the summation, in each side of \eqref{ineq_productive_alg4} and \eqref{ineq_nonproductive_alg4}, over productive and non-productive steps, with $\gamma_i = \gamma_i^f$ if $i \in I$ and $\gamma_j = \gamma_j^{g_{q(j)}}$ if $i \in J$, we get
\begin{align*}
    & \quad \sum_{i \in I} \left(\gamma_i^f\right)^{-m} \left( f(x^i) - f(x^*) \right) + \sum_{j \in J} \left(\gamma_j^{g_{q(j)}}\right)^{-m} \left( g_{q(j)}(x^j) - g_{q(j)}(x^*) \right)  
    \\& \leq \sum_{i = 1}^{k}\frac{1}{\gamma_i^{m+1}} \left( V_{\psi} (x^*, x^{i}) - V_{\psi} (x^*, x^{i+1}) \right) +\frac{1}{2 \sigma} \sum_{i \in I} \frac{\|\nabla f(x^i)\|_*^2}{ \left(\gamma_i^f\right)^{m - 1}}
    \\& \;\;\;\; + \frac{1}{2 \sigma} \sum_{j \in J} \frac{\|\nabla g_{q(j)}(x^j)\|_*^2}{ \left(\gamma_j^{g_{q(j)}}\right)^{m - 1}}.
\end{align*}

Since for any $j \in J$, it holds
\begin{equation*}
g_{q(j)}(x^j) - g_{q(j)}(x^*) \geq g_{q(j)}(x^j) > \varepsilon > 0,
\end{equation*}
and since $f$ is a convex function, we get
\begin{align*}
& \quad \left(\sum_{i \in I} \left(\gamma_i^f\right)^{-m}\right)  \left( f(\hat{x}) - f(x^*) \right) 
\\&  < \frac{\theta_1}{\gamma_k^{m+1}} + \frac{1}{2 \sigma} \sum_{i \in I} \frac{\|\nabla f(x^i)\|_*^2}{ \left(\gamma_i^f\right)^{m - 1}}  +  \frac{1}{2 \sigma} \sum_{j \in J} \frac{\|\nabla g_{q(j)}(x^j)\|_*^2}{ \left(\gamma_j^{g_{q(j)}}\right)^{m - 1}}  - \varepsilon \sum_{j \in J} \left(\gamma_j^{g_{q(j)}}\right)^{-m} 
\\&   = \frac{\theta_1}{\gamma_k^{m+1}} + \frac{1}{2 \sigma} \sum_{i \in I} \frac{\|\nabla f(x^i)\|_*^2}{ \left(\gamma_i^f\right)^{m - 1}}  +  \frac{1}{2 \sigma} \sum_{j \in J} \frac{\|\nabla g_{q(j)}(x^j)\|_*^2}{ \left(\gamma_j^{g_{q(j)}}\right)^{m - 1}}  - \varepsilon \sum_{i = 1}^{k} (\gamma_i)^{-m}
\\& \quad  + \varepsilon \sum_{i \in I} \left(\gamma_i^{f}\right)^{-m}.
\end{align*}
But $\gamma_k = \frac{\sqrt{2 \sigma}}{\|\nabla f(x^k)\|_* \sqrt{k}}$ or $\gamma_k = \frac{\sqrt{2 \sigma}}{\|\nabla g_{q(k)}(x^k)\|_* \sqrt{k}}$ and in both cases we have $(\gamma_k)^{m+1} \geq \left(\frac{\sqrt{2\sigma}}{M \sqrt{k}}\right)^{m+1}$. Thus, we have

\begin{align*}
& \quad \left(\sum_{i \in I} \left(\gamma_i^f\right)^{-m}\right)  \left( f(\hat{x}) - f(x^*) \right) 
\\&  < \varepsilon \sum_{i \in I} \left(\gamma_i^{f}\right)^{-m} + \theta_1 \left(\frac{M \sqrt{k}}{\sqrt{2\sigma}}\right)^{m+1}  + \frac{1}{2 \sigma} \sum_{i \in I}\|\nabla f(x^i)\|_*^2\left(\frac{\|\nabla f(x^i)\|_* \sqrt{i}}{\sqrt{2\sigma}}\right)^{m-1} 
\\& \quad +  \frac{1}{2 \sigma} \sum_{j \in J} \|\nabla g_{q(j)}(x^j)\|_*^2 \left(\frac{\|\nabla g_{q(j)}(x^j)\|_* \sqrt{j}}{\sqrt{2\sigma}}\right)^{m-1} - \varepsilon \sum_{i = 1}^{k} \left(\frac{L_i \sqrt{i}}{\sqrt{2 \sigma}}\right)^{m}
\\& \quad = \varepsilon \sum_{i \in I} \left(\gamma_i^{f}\right)^{-m} - \Bigg( \varepsilon \sum_{i = 1}^{k} \left(\frac{L_i \sqrt{i}}{\sqrt{2 \sigma}}\right)^{m} - \theta_1 \left(\frac{M \sqrt{k}}{\sqrt{2\sigma}}\right)^{m+1} 
\\& \qquad \qquad \qquad \qquad \qquad 
- \frac{1}{(\sqrt{2 \sigma})^{m+1}} \sum_{i \in I} (\sqrt{i})^{m-1} \|\nabla f(x^i)\|_*^{m+1}
\\& \qquad \qquad \qquad \qquad \qquad 
- \frac{1}{(\sqrt{2 \sigma})^{m+1}} \sum_{j \in J} (\sqrt{j})^{m-1} \|\nabla g_{q(j)}(x^j)\|_*^{m+1}
\Bigg). 
\end{align*}

Therefore, from stopping criterion of Algorithm \ref{alg_new_constraints_modif}, we get 
$$
\left(\sum_{i \in I} \left(\gamma_i^f\right)^{-m}\right)  \left( f(\hat{x}) - f(x^*) \right)  < \varepsilon \sum_{i \in I} \left(\gamma_i^{f}\right)^{-m}. 
$$
Thus, by dividing both sides of the previous inequality by $\sum_{i \in I} \left(\gamma_i^f\right)^{-m}$, we get the desired inequality $f(\hat{x}) - f(x^*) < \varepsilon$. 

For all $i \in I$ it holds that $g_s(x^i) \leq \varepsilon \, \forall s \in \{1, 2, \ldots, p\}$, and since $g_s \, (\forall s \in \{1, 2, \ldots, p\})$ are convex functions, then we have 
$$
g_s(\hat{x}) \leq \frac{1}{\sum_{i \in I} \left(\gamma_i^f\right)^{-m}} \sum_{i \in I} \left(\gamma_i^f\right)^{-m} g_s(x^i) \leq \varepsilon, \; \forall s \in \{1, 2, \ldots, p\}.
$$
\end{proof}

\begin{remark}
For any $m \geq 1$, let us assume that Algorithm \ref{alg_new_constraints_modif} works $N$ iterations. Then we have
\begin{align*}
& \qquad f(\hat{x}) - f(x^*)   
\\& < \frac{\left(\sqrt{2\sigma}\right)^{m}}{M^m \sum_{k \in I} \left(\sqrt{k}\right)^{m}} \Bigg(  \theta_1 \left(\frac{M \sqrt{N}}{\sqrt{2 \sigma}} \right)^{m+1} + \frac{1}{2 \sigma} \sum_{k = 1}^{N} \frac{M^{m+1} \left(\sqrt{k}\right)^{m-1}}{\left(\sqrt{2 \sigma}\right)^{m-1}} \Bigg)
\\& = \frac{M}{\sqrt{2\sigma}} \cdot \frac{1}{\sum_{k \in I} \left(\sqrt{k}\right)^m} \cdot \left(\theta_1 \left(\sqrt{N}\right)^{m+1} + \sum_{k = 1}^{N} \left(\sqrt{k}\right)^{m-1} \right) 
\\& \leq \frac{M}{\sqrt{2 \sigma}} \cdot\frac{1}{\sum_{k \in I} \left(\sqrt{k}\right)^m} \cdot \left( \theta_1 \left(\sqrt{N}\right)^{m+1} + N \left(\sqrt{N}\right)^{m-1} \right)
\\& = \frac{M(1+ \theta_1) \left(\sqrt{N}\right)^{m+1}}{\sqrt{2 \sigma}} \cdot\frac{1}{\sum_{k \in I} \left(\sqrt{k}\right)^m}.
\end{align*}

Now, by setting $\frac{M(1+ \theta_1) \left(\sqrt{N}\right)^{m+1}}{\sqrt{2 \sigma}} \cdot\frac{1}{\sum_{k \in I} \left(\sqrt{k}\right)^m} \leq \varepsilon$ and since $|I| \leq N$, we get 
$$
\frac{M(1+ \theta_1) \left(\sqrt{N}\right)^{m+1}}{\sqrt{2 \sigma} N\left(\sqrt{N}\right)^m} \leq \frac{M(1+ \theta_1) \left(\sqrt{N}\right)^{m+1}}{\sqrt{2 \sigma}} \cdot\frac{1}{\sum_{k \in I} \left(\sqrt{k}\right)^m} \leq  \varepsilon.
$$
Thus, 
$$
\frac{M (1 + \theta_1)}{\sqrt{N} \sqrt{2\sigma}}  \leq \varepsilon \quad \Longrightarrow \quad N \geq \frac{M^2 (1 + \theta_1)^2}{2 \sigma \varepsilon^2}. 
$$
Therefore, we find that after 
$$
N = \left\lceil \frac{M^2 (1+  \theta_1)^2}{2 \sigma \varepsilon^2} \right\rceil = O \left(\frac{1}{\varepsilon^2}\right)
$$
iterations of Algorithm \ref{alg_new_constraints_modif}, for any fixed $m \geq 1$, it satisfy $f(\hat{x}) - f(x^*) < \varepsilon$ and $g_i (\hat{x}) \leq \varepsilon \, \forall i \in \{1, 2, \ldots, p\}$.  
\end{remark}

\section{Numerical experiments}\label{sect_numerical}
To show the advantages and effects of the weighting scheme for generated points $\hat{x}$, by Algorithm \ref{alg_mirror_descent} (see Theorem \ref{theo_main_ineq_mirror_desc}) (the same will be for the generated points $\hat{x}$ by Algorithm \ref{alg_mirrorC_descent} (see Theorem \ref{theo_main_ineq_mirrorC})), in the convergence of algorithms, a series of numerical experiments were
performed for some problems with a geometrical nature in addition to the problem of minimizing the maximum of a finite collection of linear functions.

First, we mention that for the following adaptive step size scheme
\begin{equation}\label{adaptive_steps}
    \gamma_k = \frac{\sqrt{2 \sigma}}{\|\nabla f(x^k)\|_* \sqrt{k}} \quad k = 1, \ldots N, 
\end{equation}
there is no guarantee that the sequence $\{\gamma_k\}_{k \geq 1}$ is non-increasing. As a result, this step size does not satisfy one of the conditions of Theorem \ref{theo_main_ineq_mirror_desc}. However, Algorithm \ref{alg_mirror_descent} performs well in practice, and the convergence rate remains the same as the step size defined in \eqref{steps_rules}.  Therefore, we need to consider restructuring the scheme of step sizes so that it becomes adaptive and allows us to obtain the same results as previously obtained for the non-adaptive ones.

We compare the performance of the studied algorithms with the projected subgradient method using different famous step size rules that are listed in Table \ref{Tab_steps}. 
Therefore, in our experiments, we take the standard Euclidean prox-structure, namely $\psi(x) = \frac{1}{2} \|x\|_2^2$ which is $1$-strongly functions (i.e., $\sigma = 1$) and the corresponding Bregman divergence is $V_{\psi}(x, y) = \frac{1}{2} \|x - y\|_2^2$. In all experiments, we take the set $Q$ as a unit ball in $\mathbb{R}^n$ with the center at $0 \in \mathbb{R}^n$. All
compared methods start from the same initial point $x^1 = \left(\frac{1}{\sqrt{n}}, \ldots, \frac{1}{\sqrt{n}}\right) \in Q \subset \mathbb{R}^n$. In the AdaGrad algorithm, we take $\alpha = 10^{-8}$, and there is an assumption that $\|x^1 - x^*\|_2^2 \leq 2 \theta_0^2$, thus for the taken feasible set in our experiments, we can take $\theta_0 = \sqrt{2}$.  

The comparison of the methods is done in terms of the difference $\hat{f}_k - f_{\text{min}}$, where $\hat{f}_k$ denotes the value of the objective function $f$ at the averaged points (namely at $\hat{x}_k = \frac{1}{k} \sum_{j = 1}^{k} x^k$, for all cases in Table \ref{Tab_steps}, except to the case of ''quad grad'', where in this case we have $\hat{x}_k = \left(\sum_{j = 1}^{k} \gamma_k\right)^{-1} \sum_{j=1}^{k} \gamma_k x^k $) and $f_{\text{min}}$ denotes the minimal value of the objective function computed by SciPy, a package for solving many different classes of optimization problems (when the dimension of the space $\mathbb{R}^n$ is not big. Also, we did a comparison of the methods when $\hat{f}_k$ denotes the best value of
$f(x^k)$ attained, i.e., $\hat{f}_k = \min \{f(x^1), \ldots, f(x^k)\}$, see Remarks \ref{fig_best_approx_n1000_outs_mins} and \ref{fig_Fermat_n200_outs_mins}. 

\begin{table}[htp]
\begin{tabular}{l l l }
\hline
Abbreviation & Step sizes  & formula of $\gamma_k$ \\ \hline\hline
constant step & constant step size \cite{Boyd2004Subgradient} & $\gamma_k = 0.1$, \\ \hline
fixed length  & fixed step length \cite{Boyd2004Subgradient} & $\gamma_k = \frac{0.2}{\|\nabla f(x^k)\|_*}$, \\ \hline
nonsum        &
\shortstack{non-summable \\ diminishing step \cite{Boyd2004Subgradient}}    & $\gamma_k = \frac{0.1}{\sqrt{k}}$,\\ \hline
sqrsum nonsum & 
\shortstack{square summable but \\ not summable step \cite{Boyd2004Subgradient}}
  & $\gamma_k = \frac{0.5}{k}$, \\ \hline
quad grad     & \shortstack{quadratic of the norm \\ of the gradient \cite{FedorRolandbook}}  & $\gamma_k = \frac{0.2}{\|\nabla f(x^k)\|_*^2}$, \\ \hline
AdaGrad       & \shortstack{AdaGrad \\ algorithm \cite{Duchi2011Adaptive}}   & $\gamma_k =  \frac{\theta_0}{\sqrt{\sum_{j = 1}^{k} \|\nabla f(x^j)\|_*^2 + \alpha}} $, \\ \hline
Polyak step   & Polyak step size \cite{polyak_book} & $\gamma_k = \frac{f(x^k) - f^*}{\|\nabla f(x^k)\|_*^2}$,  \\ \hline
\shortstack{non-adaptive \\ time-varying} &  scheme \eqref{steps_rules} & $\gamma_k = \frac{\sqrt{2\sigma}}{M_f \sqrt{k}}$, \\ \hline
\shortstack{adaptive \\time-varying} &  scheme \eqref{adaptive_steps} & $\gamma_k = \frac{\sqrt{2\sigma}}{\|\nabla f(x^k)\|_* \sqrt{k}}$. \\ \hline\hline
\end{tabular}
\caption{The used step sizes in the compared methods.}
\label{Tab_steps}
\end{table}

\subsection{Best approximation problem}

The considered problem in this subsection is connected with the problem of the best approximation of the distance between a point and a given set $Q$. For this problem, let $A \notin  Q$ be a given point, we need to solve the following optimization problem
\begin{equation}\label{best_approx_prob}
 \min_{x \in Q}\left\{ f(x) : = \|x - A\|_2 \right\}. 
\end{equation}

The point $A$ is randomly generated from a uniform distribution over $[0, 1)$, such that $\|A\|_2 = 10$, therefore the distance between the point $A$ and the considered unit ball $Q$ is equal to $9$, i.e., $f^* = 9$. Here, we mention that this problem is constructed so that it can be used the Polyak step size, which requires knowing the optimal value $f^*$. The results of the comparison, for problem \eqref{best_approx_prob} with $n = 1000$  are presented in Fig. \ref{fig_best_approx_n1000}. In this figure $\hat{f}_k$  denotes the value of the objective function $f$ at the averaged points in each iteration of all compared algorithms. From this figure, we can see that Algorithm \ref{alg_mirror_descent}, with non-adaptive and adaptive step size rules \eqref{steps_rules}, works better than other algorithms, where the difference between the performance of Algorithm \ref{alg_mirror_descent} and the rest of algorithms with steps in Table \ref{Tab_steps} is clear and significant. We also note that there is no significant difference in the performance of the Algorithm \ref{alg_mirror_descent} when using the adaptive or non-adaptive step size rules steps  

\begin{figure}[htp]
\minipage{0.95\textwidth}
\includegraphics[width=\linewidth]{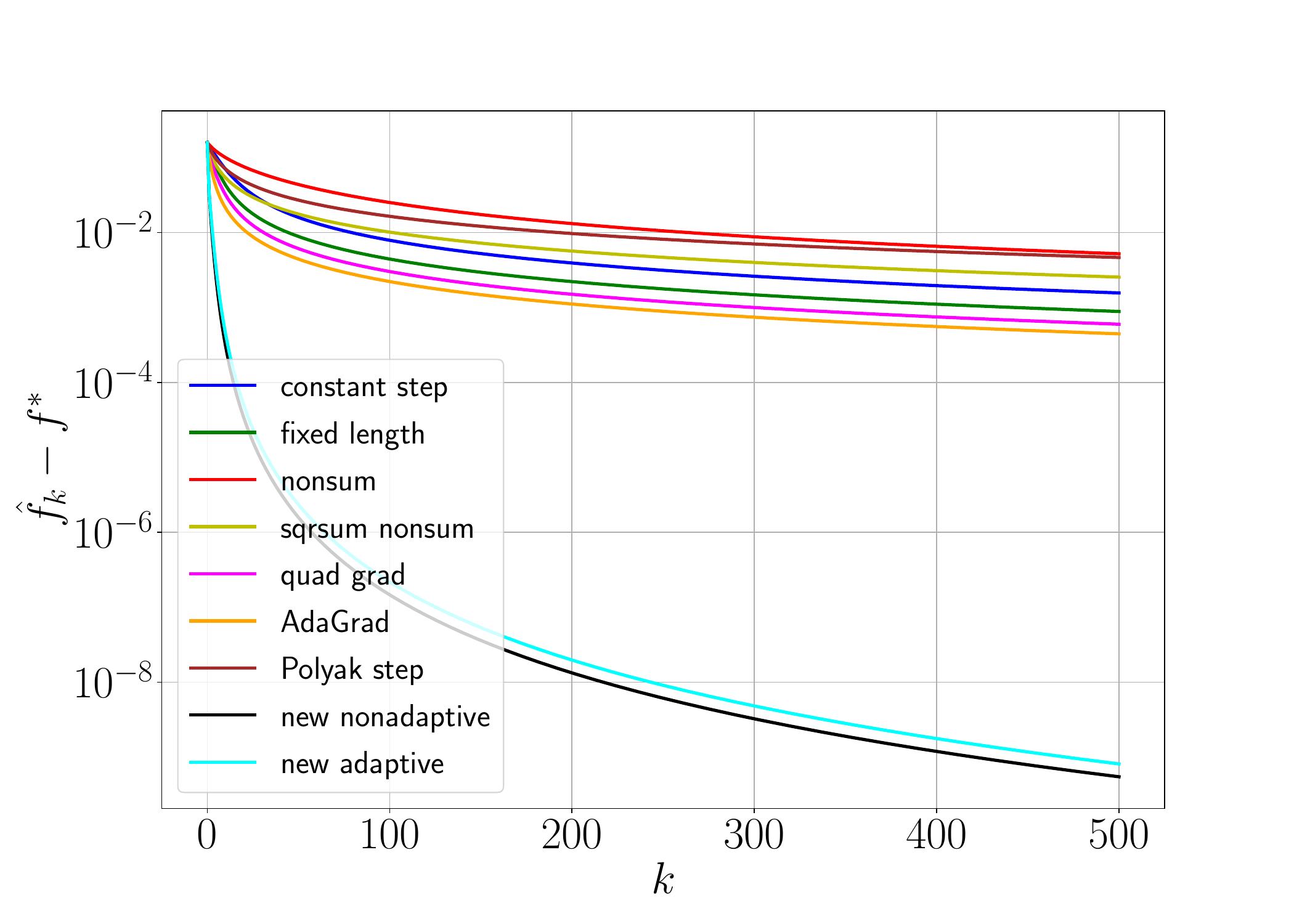}
\endminipage
\caption{Results of  Algorithm \ref{alg_mirror_descent} and projected subgradient method using different step size rules listed in Table \ref{Tab_steps}, for problem  \eqref{best_approx_prob} with $n=1000, m = 5$.}
\label{fig_best_approx_n1000}
\end{figure} 

\begin{remark}\label{remark_1_for_best_approx}
If we take $\hat{f}_k = \min \{f(x^1), \ldots, f(x^k)\}$ for all algorithms with steps in Table \ref{Tab_steps}, instead of the value of the objective function at the averaged points in each iteration, then we also can see that that Algorithm \ref{alg_mirror_descent} still works better. Although the work of other algorithms will improve and be better than the results in Figure \ref{fig_best_approx_n1000}. 

\begin{figure}[htp]
\minipage{0.95\textwidth}
\includegraphics[width=\linewidth]{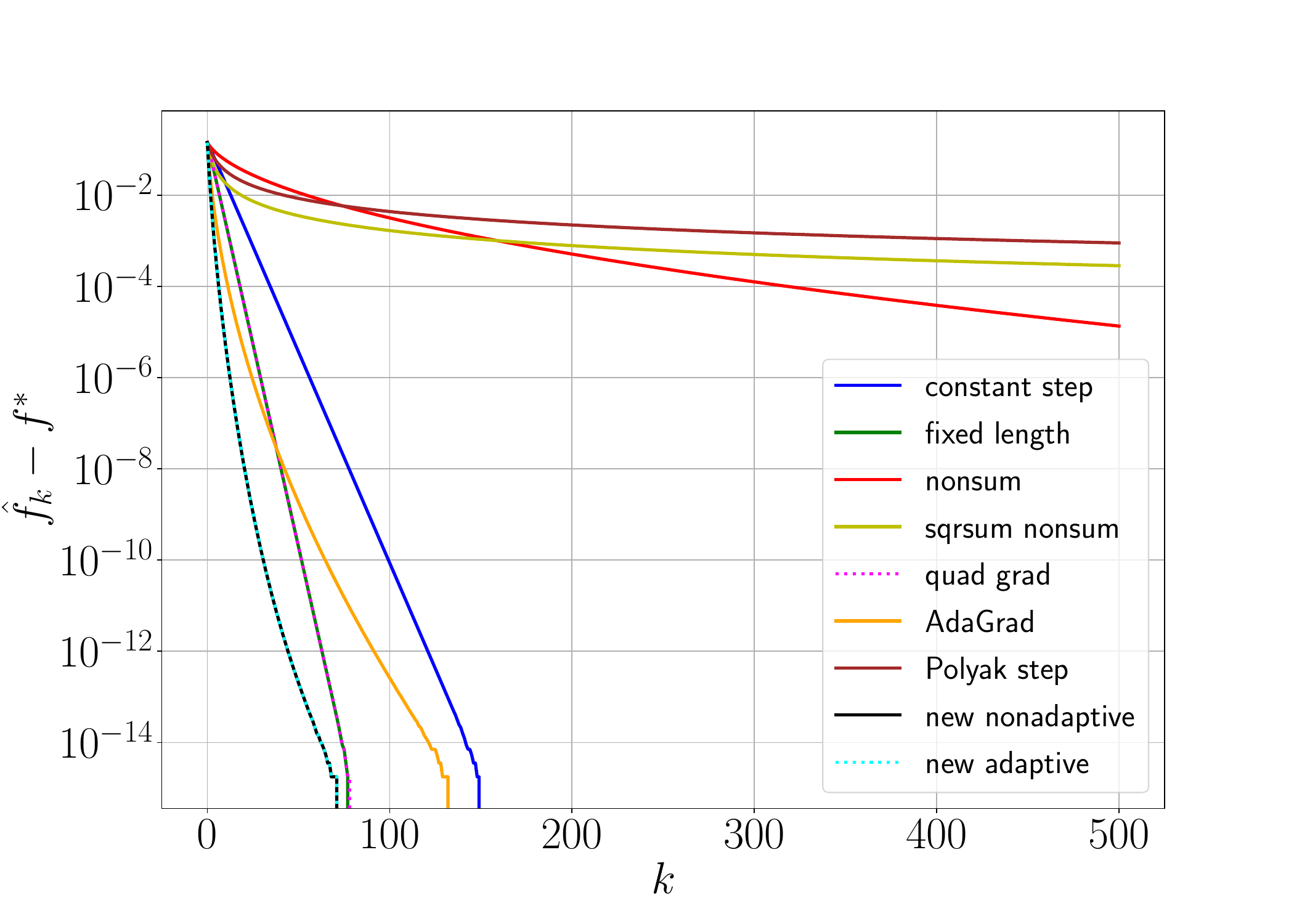}
\endminipage
\caption{Results of  Algorithm \ref{alg_mirror_descent} and projected subgradient method using different step size rules listed in Table \ref{Tab_steps}, for problem  \eqref{best_approx_prob} with $n=1000, m = 5$. Here $\hat{f}_k = \min \{f(x^1), \ldots, f(x^k)\}$. }
\label{fig_best_approx_n1000_outs_mins}
\end{figure} 

\end{remark}

\begin{remark}\label{remark2_best_approx}
As we mentioned in Remark \ref{remark_k_gretear_minus1_mirror}, in comparison with the sub-optimal convergence rate \eqref{rate_mirror_k_minus1}, when $m > 0$, the weighting scheme $\frac{1}{\sum_{k= 1}^{N} \gamma_k^{-m}} \sum_{k = 1}^{N} \gamma_k^{-m} x^k$ (see Theorem \ref{theo_main_ineq_mirror_desc}) assigns smaller weights to the initial points and larger weights to the most recent points that generated by Algorithm \ref{alg_mirror_descent}. In Fig. \ref{fig_best_approx_n1000_differ_m}, we can see this fact, where when we increase the value of the parameter $m$, we can see that convergence of Algorithm \ref{alg_mirror_descent} and its performance becomes significantly better. 

\begin{figure}[htp]
\minipage{0.95\textwidth}
\includegraphics[width=\linewidth]{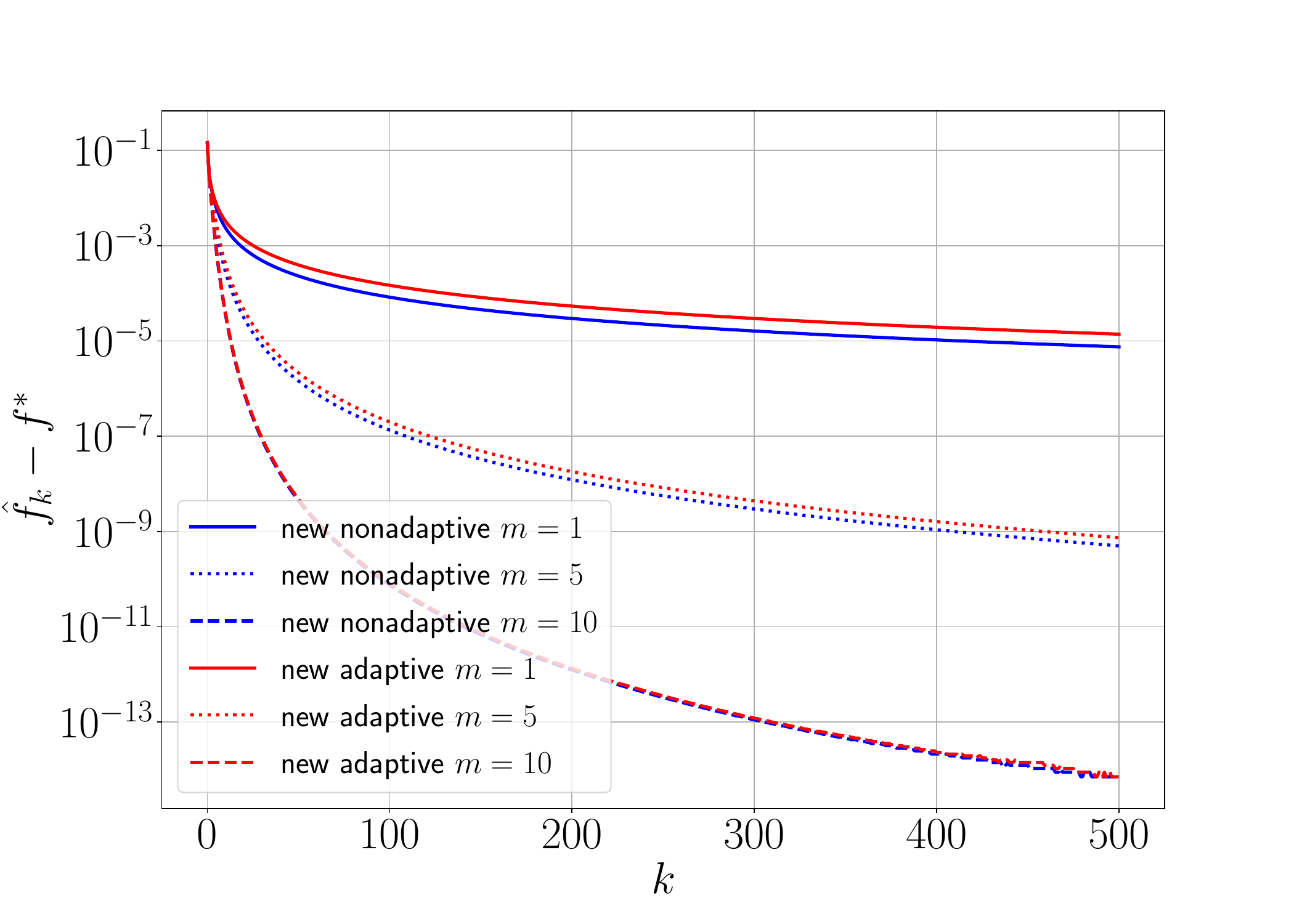}
\endminipage
\caption{Results of  Algorithm \ref{alg_mirror_descent} for  problem  \eqref{best_approx_prob} with $n=1000$ and different values of the parameter $m$.}
\label{fig_best_approx_n1000_differ_m}
\end{figure}
\end{remark}

\subsection{Fermat–Torricelli–Steiner problem.}

Let $A_j \in \mathbb{R}^n, j = 1, \ldots, T$ be a given set of $T$ points, and let us consider an analogue of the well-known Fermat–Torricelli–Steiner problem. For this we need to solve the following optimization problem
\begin{equation}\label{Fermat_prob}
 \min_{x \in Q}\left\{ f(x) : = \frac{1}{T}\sum_{j = 1}^{T} \|x -  A_j  \|_2 \right\}. 
\end{equation}

The points $A_j, j = 1, 2, \ldots, T$ are randomly generated from a uniform distribution over $[0, 1)$. We run all algorithms (except the algorithm with Polyak step size since we cannot know the optimal value $f^*$ for problem \eqref{Fermat_prob}), with the same initial point $x^1 = \left(\frac{1}{\sqrt{n}}, \ldots, \frac{1}{\sqrt{n}}\right) \in Q$. 

The results of the comparison, for problem \eqref{Fermat_prob} with $n = 200$ and $T= 25$,   are presented in Fig. \ref{fig_Fermat_n200}. The reason here for taking $n = 200, T = 25$ is that we calculated the value $f_{\text{min}}$ using SciPy which does not work well for large values. In Fig. \ref{fig_Fermat_n200}, $\hat{f}_k$  denotes the value of the objective function $f$ at the averaged points in each iteration of all compared algorithms. From this figure, we can see that Algorithm \ref{alg_mirror_descent}, with non-adaptive and adaptive step size rules \eqref{steps_rules}, outperforms the other methods providing a better solution. 

\begin{figure}[htp]
\minipage{0.95\textwidth}
\includegraphics[width=\linewidth]{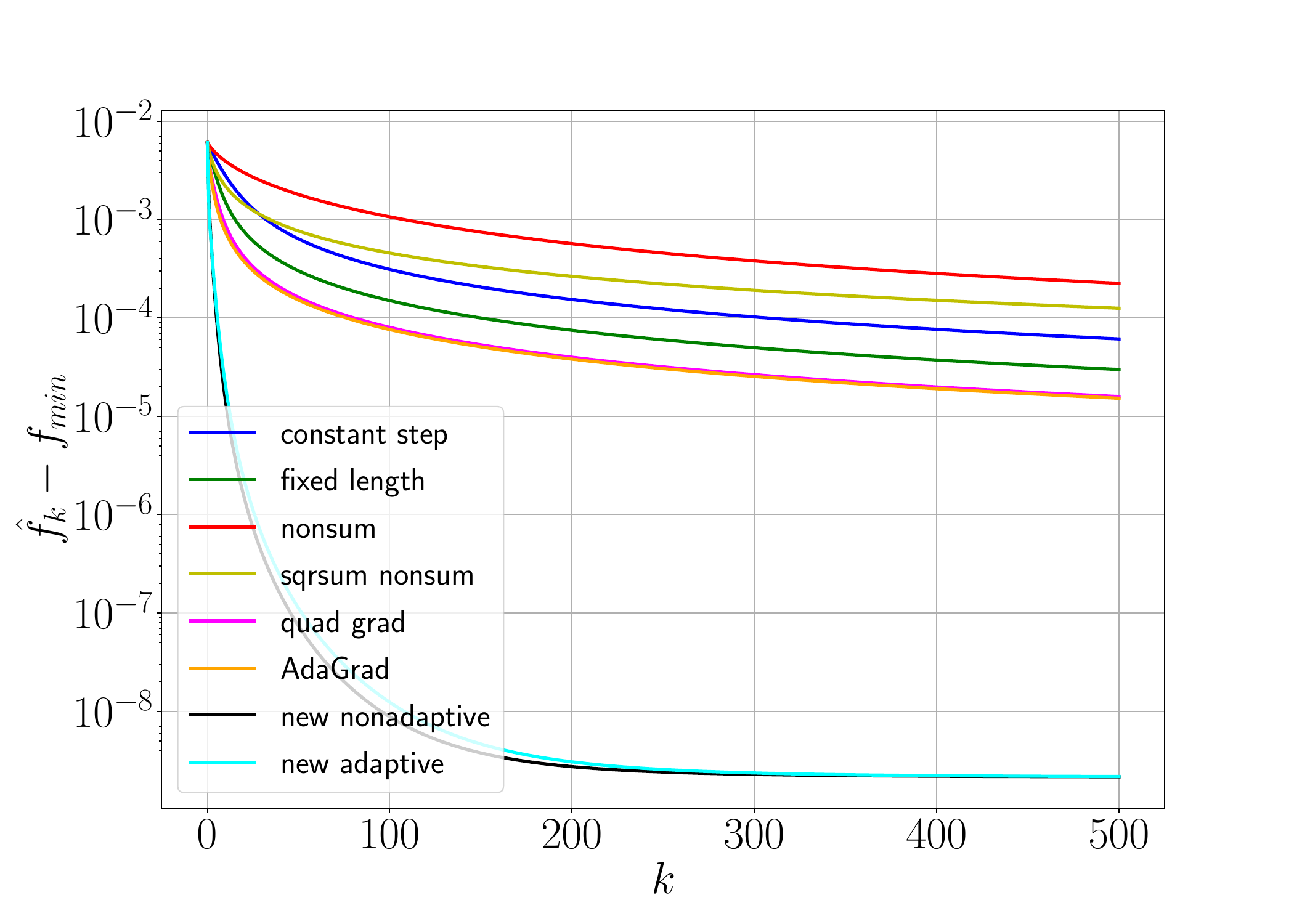}
\endminipage
\caption{Results of  Algorithm \ref{alg_mirror_descent} and projected subgradient method using different step size rules listed in Table \ref{Tab_steps}, for problem \eqref{Fermat_prob} with $n=200, T = 25, m = 5$.}
\label{fig_Fermat_n200}
\end{figure} 

\begin{remark}
As in Remark \ref{remark_1_for_best_approx}. If we take $\hat{f}_k = \min \{f(x^1), \ldots, f(x^k)\}$ for all algorithms with steps in Table \ref{Tab_steps}, instead of the value of the objective function at the averaged points in each iteration, then we also can see that that Algorithm \ref{alg_mirror_descent} still works better at the first iterations and after that, it works the same as other algorithms (except algorithm with ''nonsum'' and ''sqrsum nonsum''). 

\begin{figure}[htp]
\minipage{0.95\textwidth}
\includegraphics[width=\linewidth]{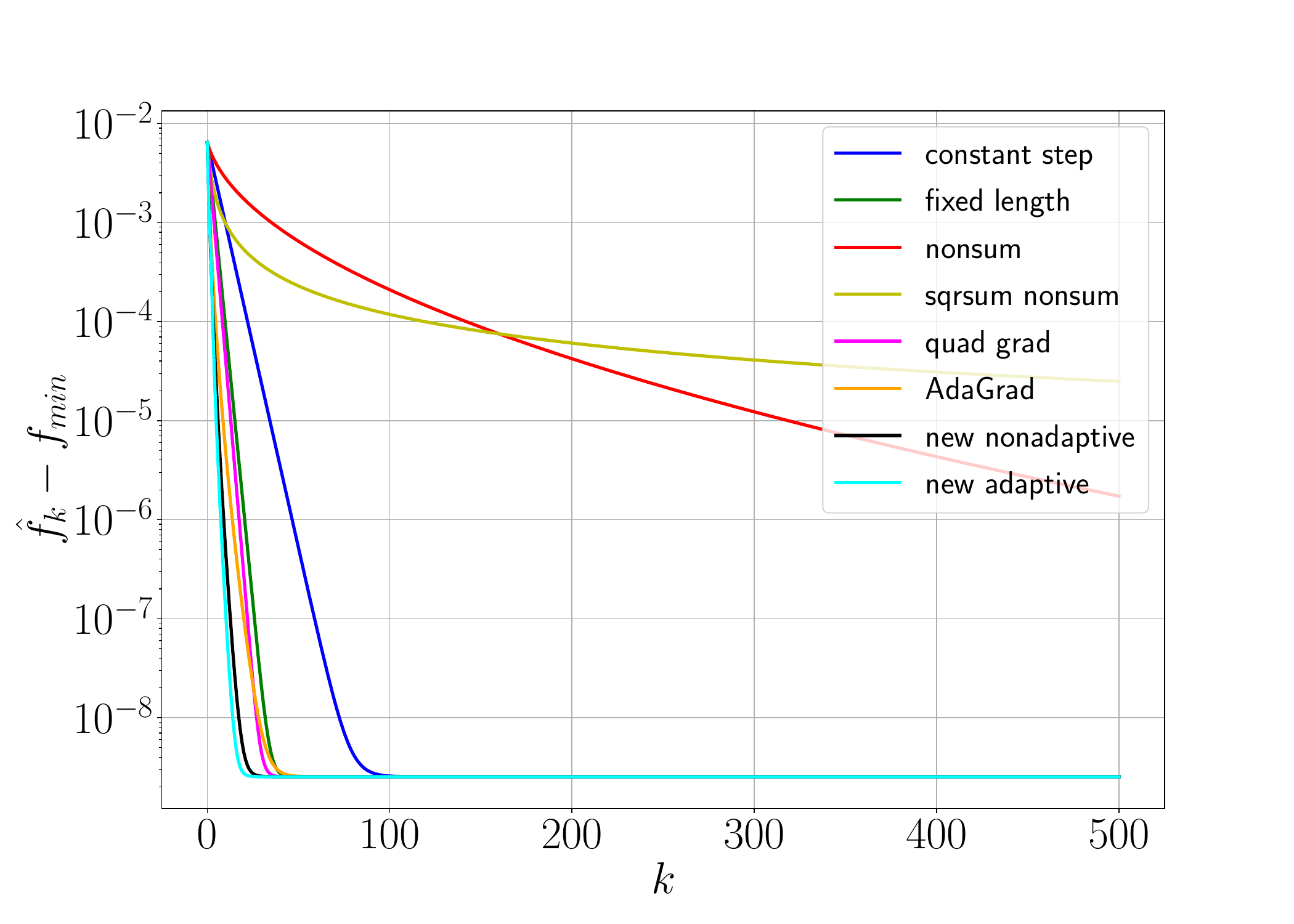}
\endminipage
\caption{Results of  Algorithm \ref{alg_mirror_descent} and projected subgradient method using different step size rules listed in Table \ref{Tab_steps}, for problem  \eqref{Fermat_prob} with $n=200, T = 25, m = 5$. Here $\hat{f}_k = \min \{f(x^1), \ldots, f(x^k)\}$. }
\label{fig_Fermat_n200_outs_mins}
\end{figure} 

\end{remark}

\begin{remark}
As in Remark \ref{remark2_best_approx}. We can see that when we increase the value of the parameter $m$ in the weighting scheme $\frac{1}{\sum_{k= 1}^{N} \gamma_k^{-m}} \sum_{k = 1}^{N} \gamma_k^{-m} x^k$ (see Theorem \ref{theo_main_ineq_mirror_desc})  the convergence of Algorithm \ref{alg_mirror_descent} and its performance becomes better.

\begin{figure}[htp]
\minipage{0.95\textwidth}
\includegraphics[width=\linewidth]{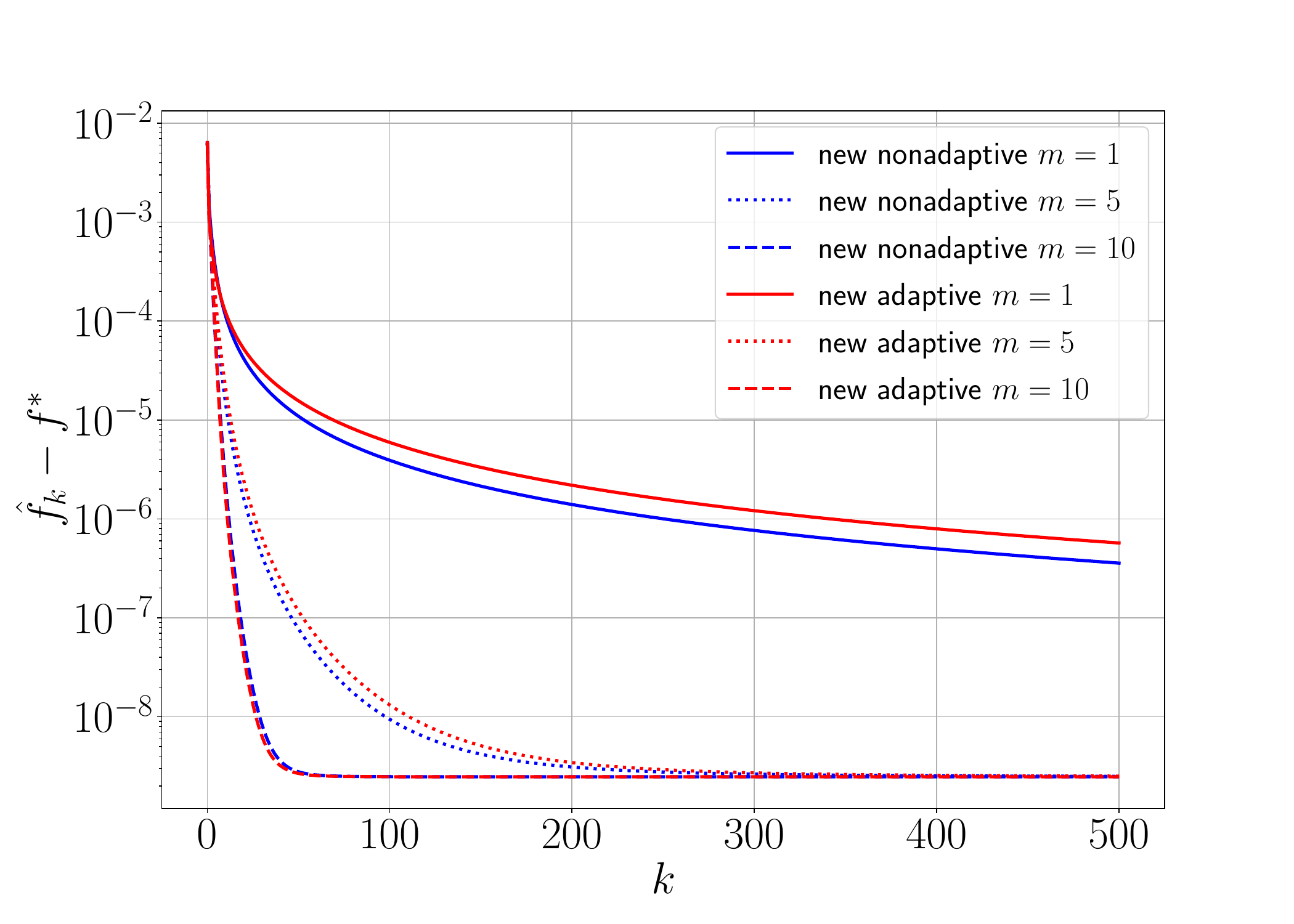}
\endminipage
\caption{Results of  Algorithm \ref{alg_mirror_descent} for  problem  \eqref{Fermat_prob} with $n=200, T = 25$ and different values of the parameter $m$.}
\label{fig_Fermat_n200_differ_m}
\end{figure}
\end{remark}

\begin{remark}
Because of SciPy does not work well for large values of $n$ and $T$, we conduct experiments without calculating $f_{\text{min}}$. We run algorithms with $n = 1000$ and $T = 100$. The results are presented in Fig. \ref{fig_Fermat_n1000}. In this figure, we show the dynamics of $\hat{f}_k$ (the value of the objective function at the averaged points in each iteration) as a function of $k$. From Fig. \ref{fig_Fermat_n1000}, as in the previous results, we also see that Algorithm \ref{alg_mirror_descent}, with non-adaptive and adaptive step size rules \eqref{steps_rules}, outperforms the other methods providing a better solution. Note that in the performance of the Algorithm \ref{alg_mirror_descent}, 
 there is a difference if we choose adaptive or non-adaptive step sizes.  

\begin{figure}[htp]
\minipage{0.95\textwidth}
\includegraphics[width=\linewidth]{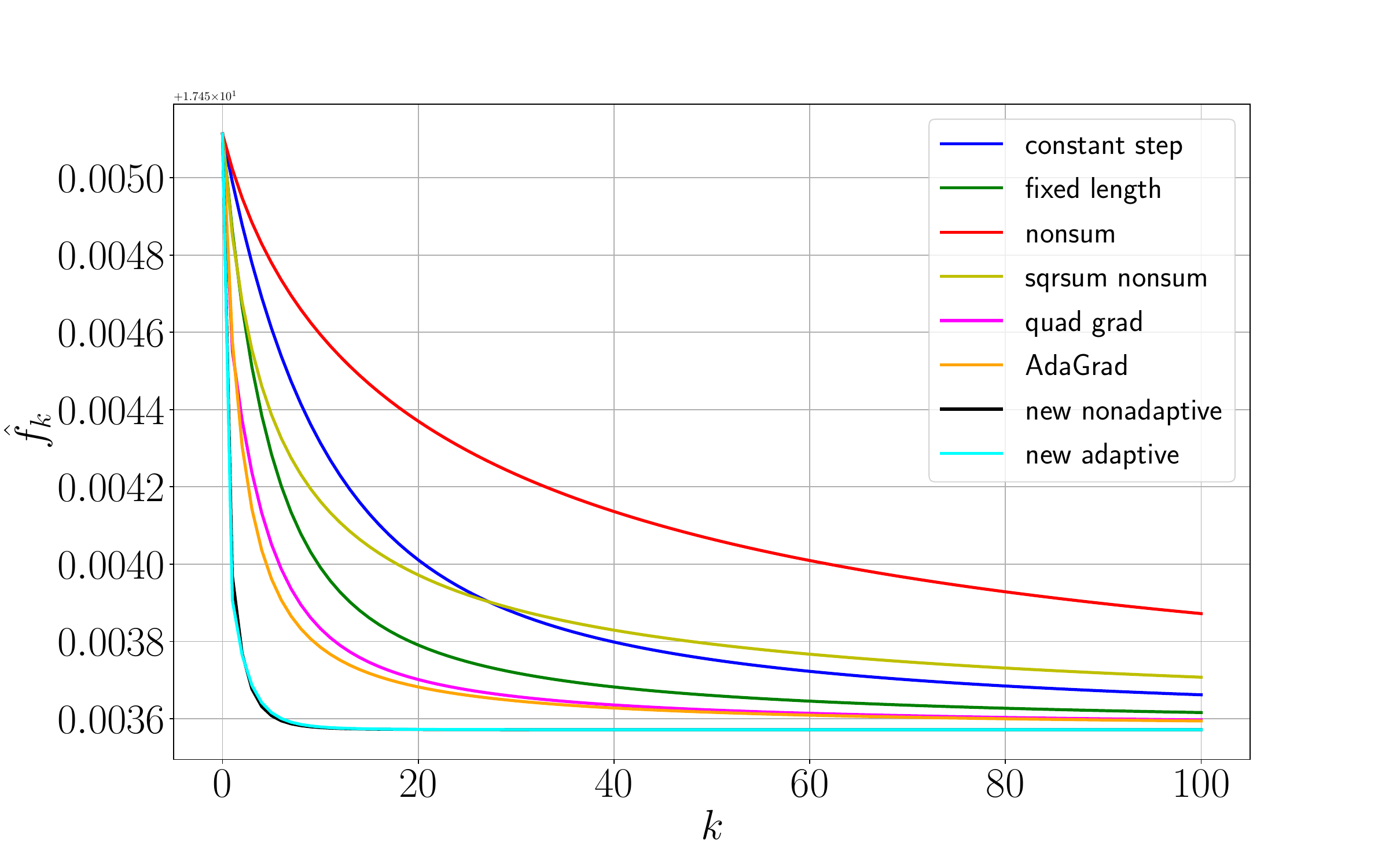}
\endminipage
\caption{Results of  Algorithm \ref{alg_mirror_descent} and projected subgradient method using different step size rules listed in Table \ref{Tab_steps}, for problem  \eqref{Fermat_prob} with $n=1000, T = 100, m = 5$.}
\label{fig_Fermat_n1000}
\end{figure}
\end{remark}

\subsection{Smallest covering ball problem}

Let $A_j \in \mathbb{R}^n, j = 1, \ldots, T$ be a given set of $T$ points, and let us consider an analogue of the well-known smallest covering
ball problem. For this we need to solve the following optimization problem
\begin{equation}\label{smallest_covering_prob}
\min_{x \in Q}\left\{ f(x) : = \max_{1 \leq j \leq T} \|x -  A_j  \|_2 \right\}. 
\end{equation}
The points $A_j, j = 1, 2, \ldots, T$ are randomly generated from a uniform distribution over $[0, 1)$. We run all algorithms (except the algorithm with Polyak step size since we cannot know the optimal value $f^*$ for problem \eqref{Fermat_prob}), with the same initial point $x^1 = \left(\frac{1}{\sqrt{n}}, \ldots, \frac{1}{\sqrt{n}}\right) \in Q$. 

The results of the comparison, for problem \eqref{smallest_covering_prob} with $n = 200$ and $T= 25$, are presented in Fig. \ref{fig_Fermat_n200} (in the left, where we show the dynamics of $\hat{f}_k - f_{\text{min}}$ as a function of $k$, and $f_{\text{min}}$ calculated by SciPy), and with $n = 1000$ and $T= 100$ (in the right, where we show the dynamics of $\hat{f}_k$ as a function of $k$).  

From Fig. \ref{fig_smallest_covering_n200}, we also see that Algorithm \ref{alg_mirror_descent}, with non-adaptive and adaptive step size rules \eqref{steps_rules}, outperforms the other methods providing a better solution, without any difference if we choose adaptive or non-adaptive step sizes.  

\begin{figure}[htp]
\minipage{0.48\textwidth}
\includegraphics[width=\linewidth]{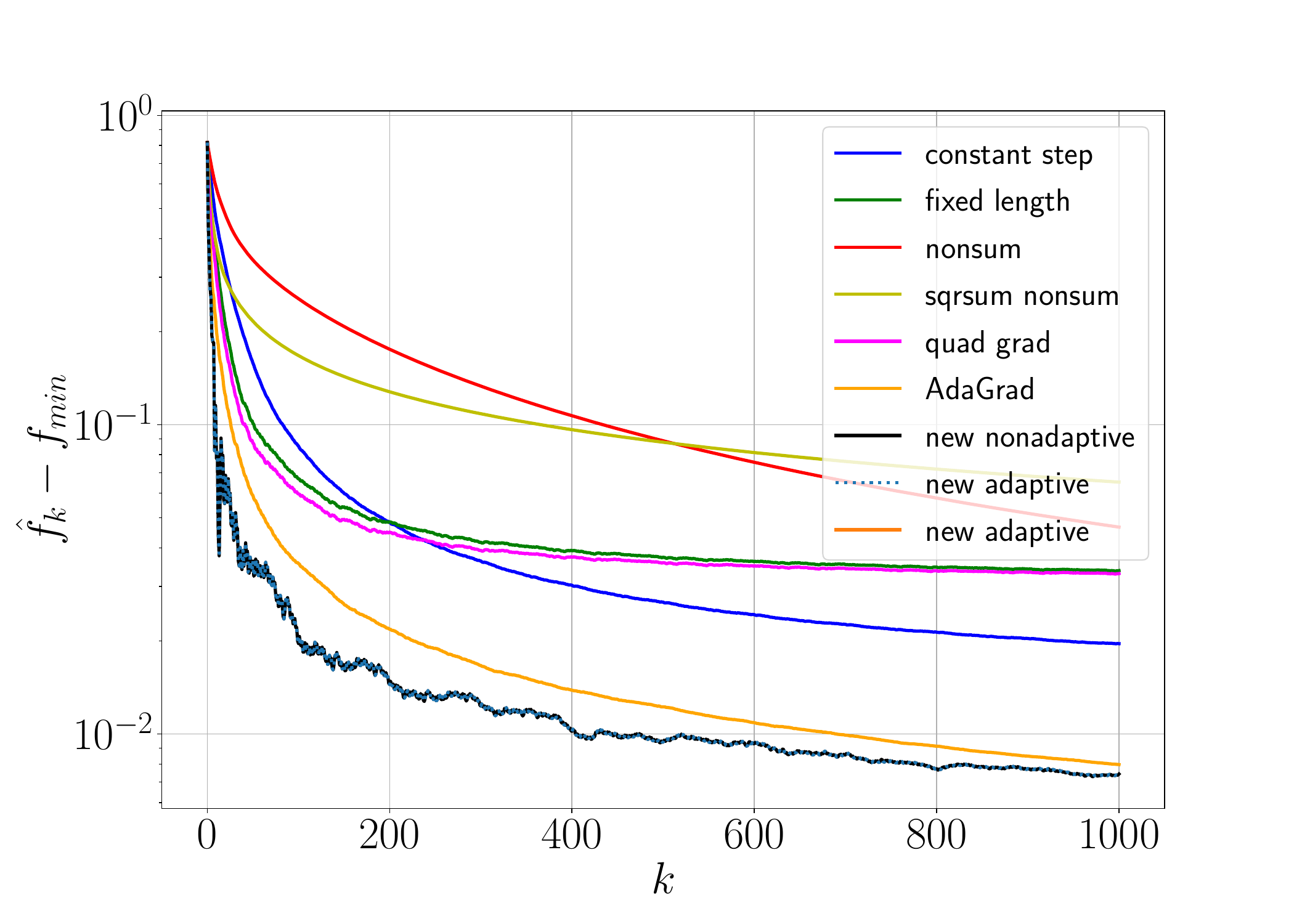}
\endminipage
\minipage{0.55\textwidth}
\includegraphics[width=\linewidth]{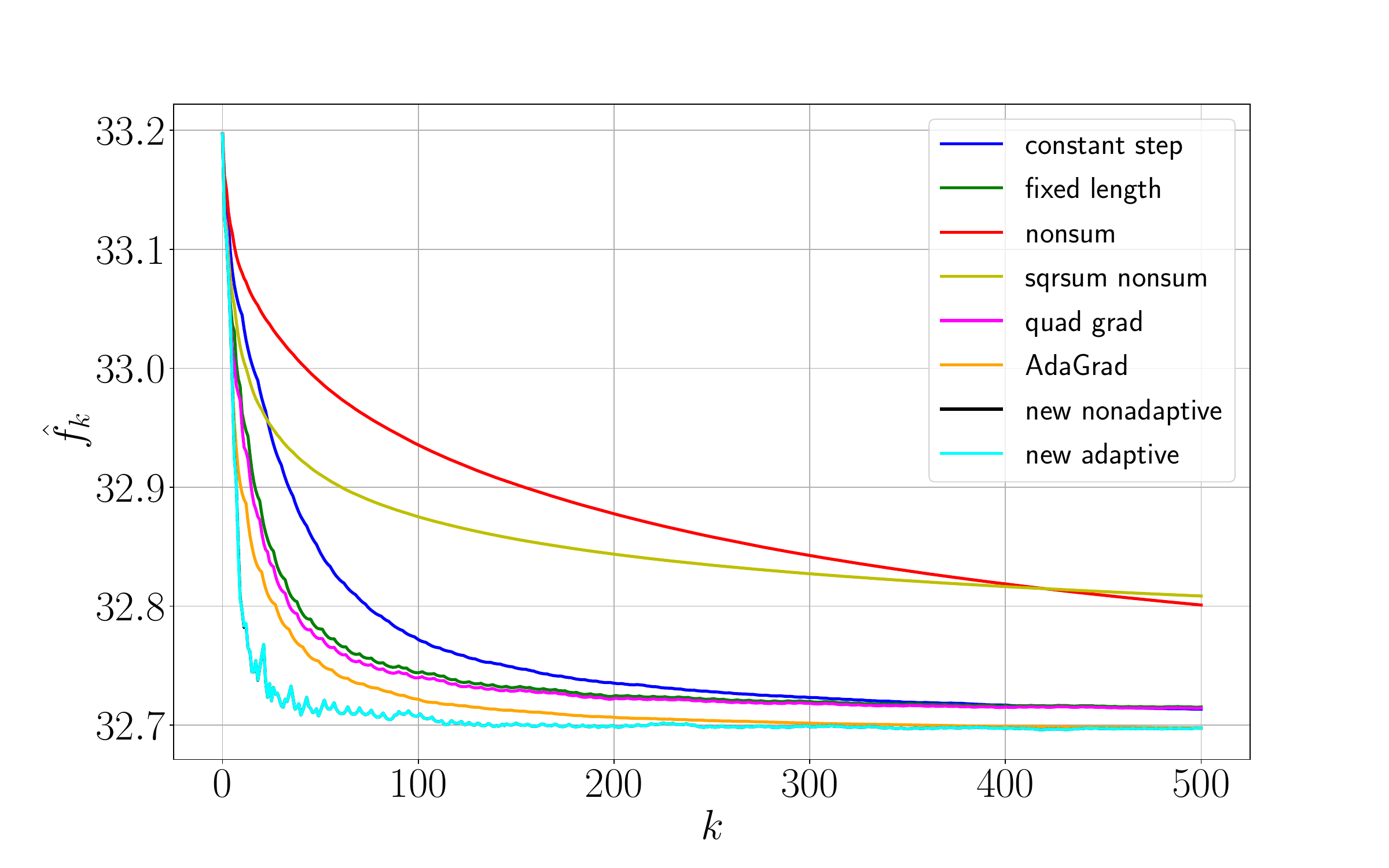}
\endminipage
\caption{Results of  Algorithm \ref{alg_mirror_descent} and projected subgradient method using different step size rules listed in Table \ref{Tab_steps}, for problem  \eqref{smallest_covering_prob} with $n=200, T = 25, m = 5$ (left). And with $n = 1000, T = 100, m = 5$ (right).}
\label{fig_smallest_covering_n200}
\end{figure}

\subsection{Maximum of a finite collection of linear functions}

In this subsection, we consider the minimization problem of a function that represents a point-wise maximum of a finite collection of linear functions. This problem has the following form:
\begin{equation}\label{prob_max_linears}
\min_{x \in Q} \left[ f(x): = \max \left\{ f_i (x) = \langle a_i, x \rangle + b_i, i = 1, 2, \ldots, T\right\} \right], 
\end{equation}
where $a_i \in \mathbb{R}^n$, and $b_i \in \mathbb{R}$ ($\forall i = 1, \ldots, T$). 

The vectors $a_j$ and constants $b_j$, for $ j = 1, 2, \ldots, T$,  are randomly generated from a uniform distribution over $[0, 1)$. We run all algorithms (except the algorithm with Polyak step size since we cannot know the optimal value $f^*$ for problem \eqref{Fermat_prob}), with the same initial point $x^1 = \left(\frac{1}{\sqrt{n}}, \ldots, \frac{1}{\sqrt{n}}\right) \in Q$. 

The results of the comparison, for problem \eqref{smallest_covering_prob} with $n = 200$ and $T= 25$, are presented in Fig. \ref{fig_max_linears_n200} (in the left, where we show the dynamics of $\hat{f}_k - f_{\text{min}}$ as a function of $k$, and $f_{\text{min}}$ calculated by SciPy), and with $n = 1000$ and $T= 100$ (in the right, where we show the dynamics of $\hat{f}_k$ as a function of $k$).

From Fig. \ref{fig_max_linears_n200}, we also see that Algorithm \ref{alg_mirror_descent}, with non-adaptive and adaptive step size rules \eqref{steps_rules}, outperforms the other methods providing a better solution, without any difference if we choose adaptive or non-adaptive step sizes.  

\begin{figure}[htp]
\minipage{0.48\textwidth}
\includegraphics[width=\linewidth]{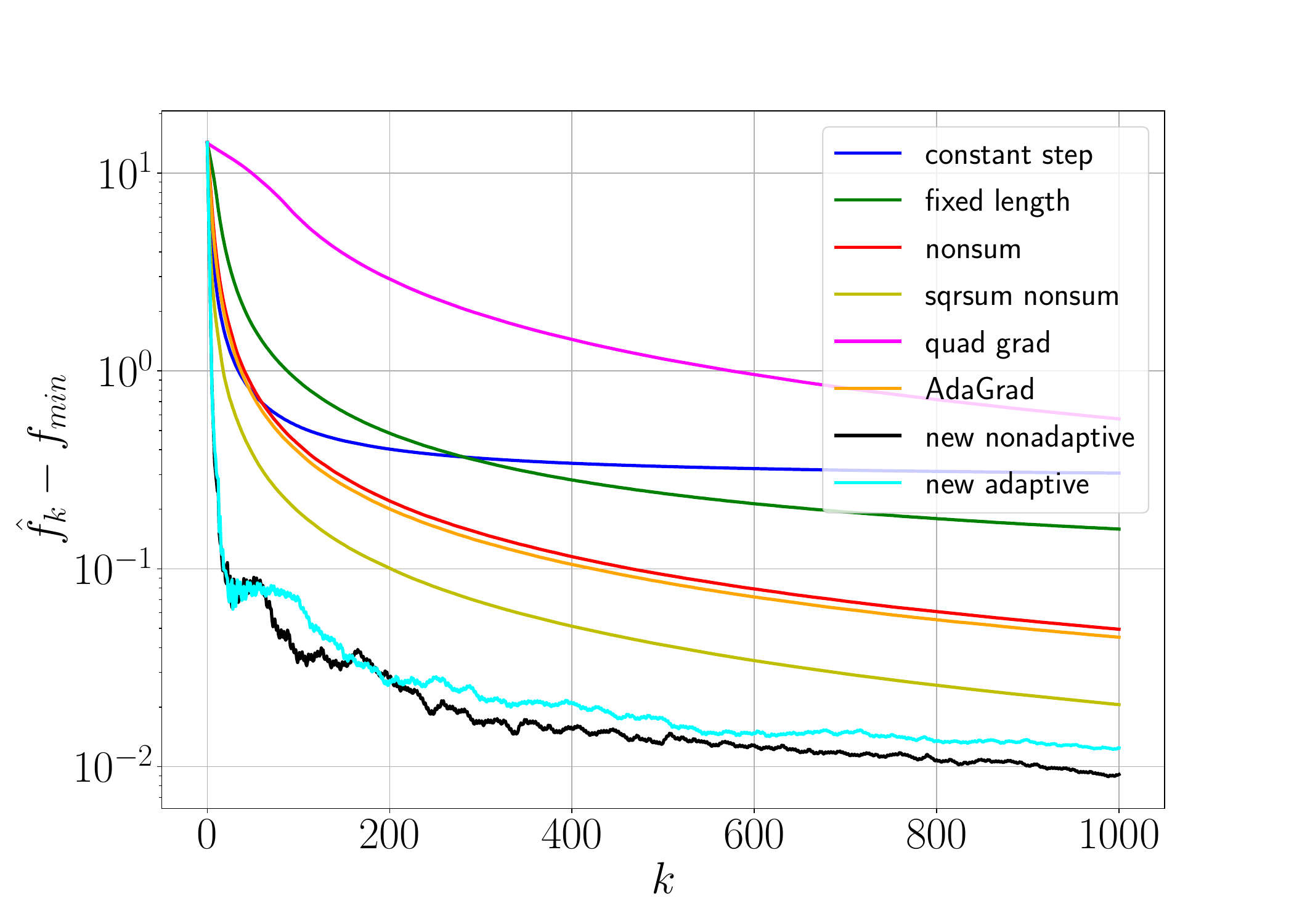}
\endminipage
\minipage{0.55\textwidth}
\includegraphics[width=\linewidth]{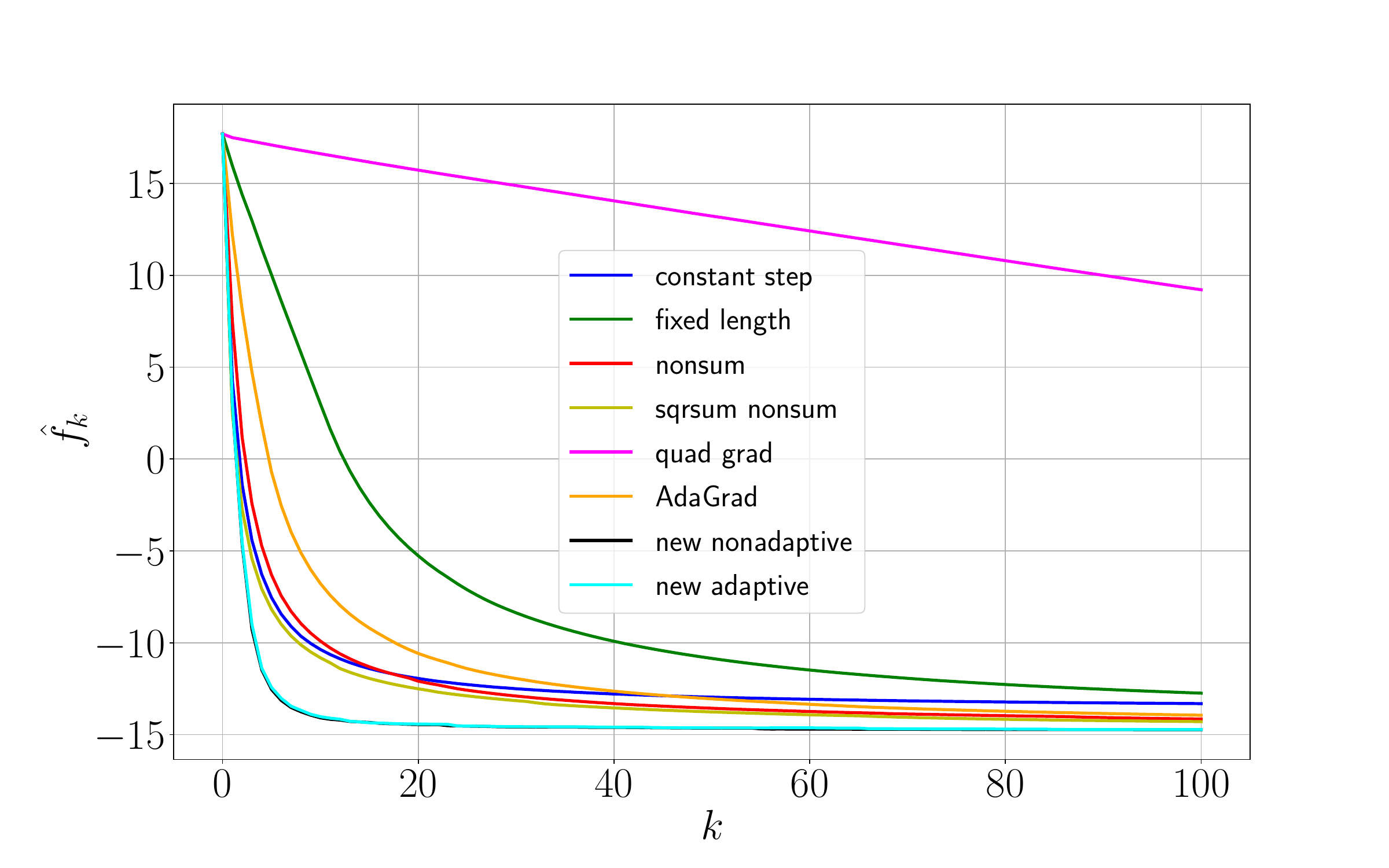}
\endminipage
\caption{Results of  Algorithm \ref{alg_mirror_descent} and projected subgradient method using different step size rules listed in Table \ref{Tab_steps}, for problem  \eqref{prob_max_linears} with $n=200, T = 25, m = 5$ (left). And with $n = 1000, T = 100, m = 5$ (right).}
\label{fig_max_linears_n200}
\end{figure} 

{\color{black}
\subsection{Problems with functional constraints}
In this subsection, we will show the advantages of the proposed Algorithm \ref{alg_new_constraints} , and compare its work with algorithms 1 and 2 from  \cite{article:adaptive_mirror_2018}, which were proposed to solve the problem \eqref{general_problem_1}. 

First, we mention that for the following adaptive step size scheme
\begin{equation}\label{adaptivestepsalg3}
\gamma_k = 
\begin{cases}
\gamma_k^f : = \frac{\sqrt{2 \sigma}}{\|\nabla f(x^k)\|_* \sqrt{k}}, \quad  \text{if} \; k \in I, \\
\gamma_k^g : = \frac{\sqrt{2 \sigma}}{ \|\nabla g(x^k)\|\sqrt{k}}, \quad  \text{if} \; k \in J.
\end{cases}
\end{equation}
there is no guarantee that the sequence $\{\gamma_k\}_{k \geq 1}$ is non-increasing. As a result, this step size does not satisfy one of the conditions of Theorem \ref{theorem_alg3}. However, Algorithm \ref{alg_new_constraints} performs well in practice, and the convergence rate remains the same as the step size defined in \eqref{steps_rules}.  Therefore, we need to consider restructuring the scheme of step sizes so that it becomes adaptive and allows us to obtain the same results as previously obtained for the non-adaptive ones.

We will consider the problems \eqref{best_approx_prob}, \eqref{Fermat_prob}, \eqref{smallest_covering_prob} and \eqref{prob_max_linears}, with the following functional constraint
\begin{equation}\label{functional_const}
g(x) = \max_{}\left\{ g_i(x) = \langle \alpha_i, x \rangle - \beta_i, \quad i = 1, \ldots, p \right\},   
\end{equation}
where $\alpha_i \in \mathbb{R}^n$ and $\beta_i \in \mathbb{R}$ are randomly generated from a uniform distribution over $[0, 1)$. 

As in the previous, we take $Q$ as a unit ball in $\mathbb{R}^n$ with the center at $0 \in \mathbb{R}^n$. 
All compared methods start from the same initial point $x^1 = 0 \in Q$. We take $n=1000$, $p = 100$ functional constraints in \eqref{functional_const} and $\varepsilon = 10^{-3}$.  

The comparison results are presented in Fig. \ref{fig_bestapprox_fermat} and Fig. \ref{fig_smallestcovering_maxlinears}. In these figures, we show the objective function values in each problem at the points corresponding to the set of productive steps generated by the compared algorithms.  

From Figures \ref{fig_bestapprox_fermat} and \ref{fig_smallestcovering_maxlinears}, we can see how the proposed Algorithm \ref{alg_new_constraints} is the best, and we note that the work of this algorithm will be better if we increase the value of the parameter $m$.

\begin{figure}[htp]
\centering
\minipage{0.52\textwidth}
\includegraphics[width=\linewidth]{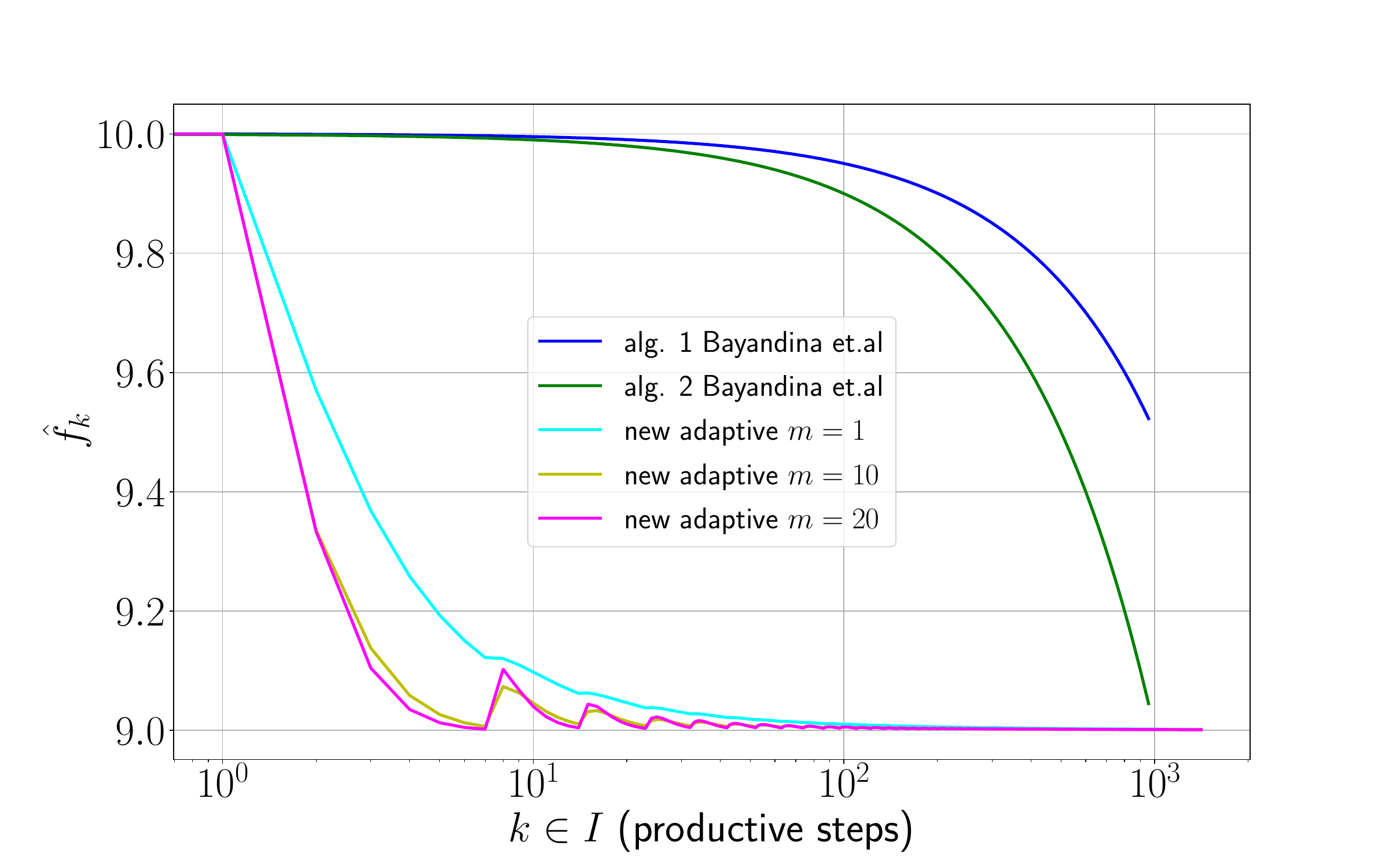}
\endminipage
\minipage{0.52\textwidth}
\includegraphics[width=\linewidth]{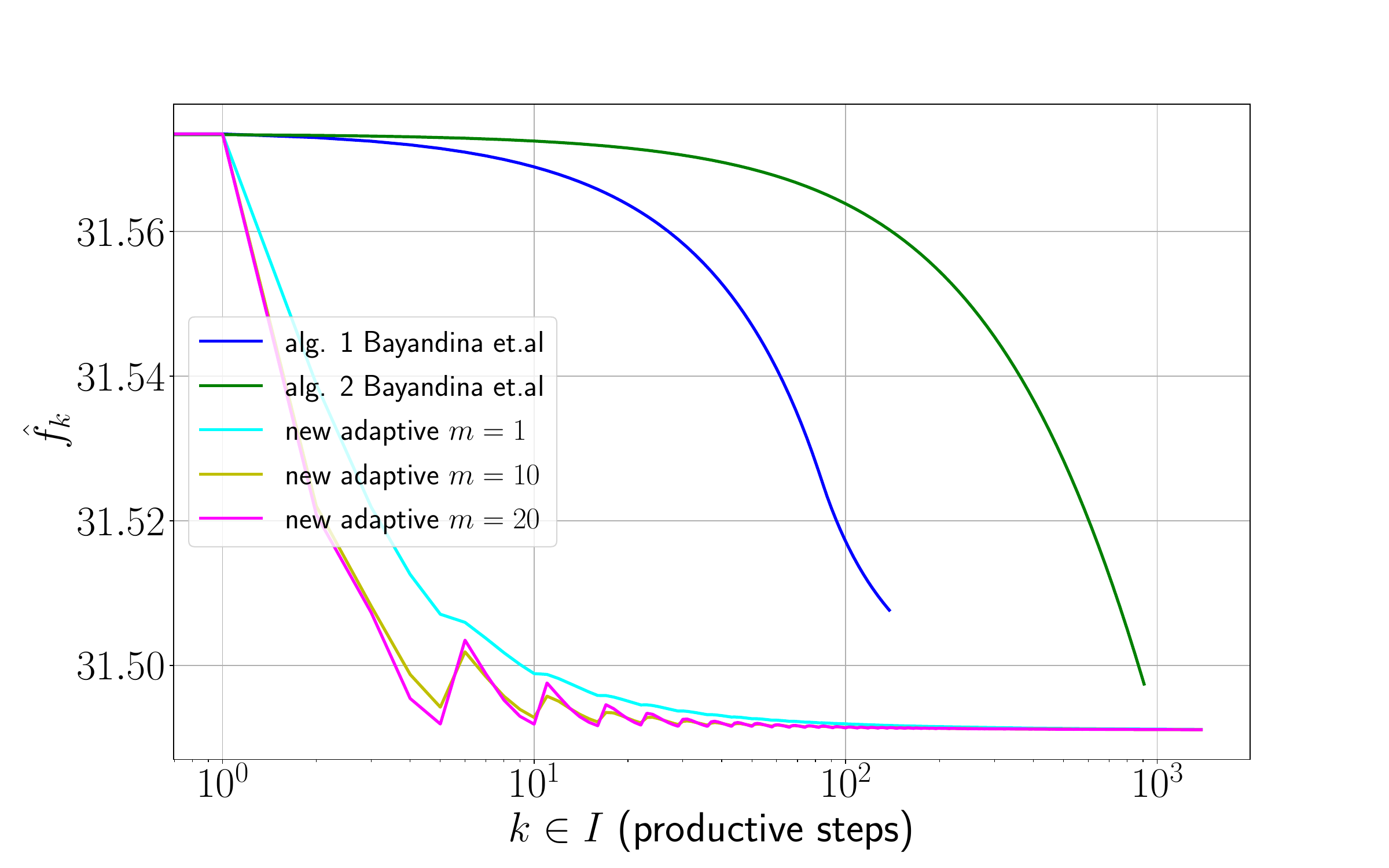}
\endminipage
\caption{Results of  Algorithm \ref{alg_new_constraints} and Algorithms 1, 2 in \cite{article:adaptive_mirror_2018}, for  problem  \eqref{best_approx_prob} (left) and problem \eqref{Fermat_prob} (right), with functional constraints \eqref{functional_const}, and  $n=1000, p = 100, T = 100$.}
\label{fig_bestapprox_fermat}
\end{figure} 

\begin{figure}[htp]
\centering
\minipage{0.52\textwidth}
\includegraphics[width=\linewidth]{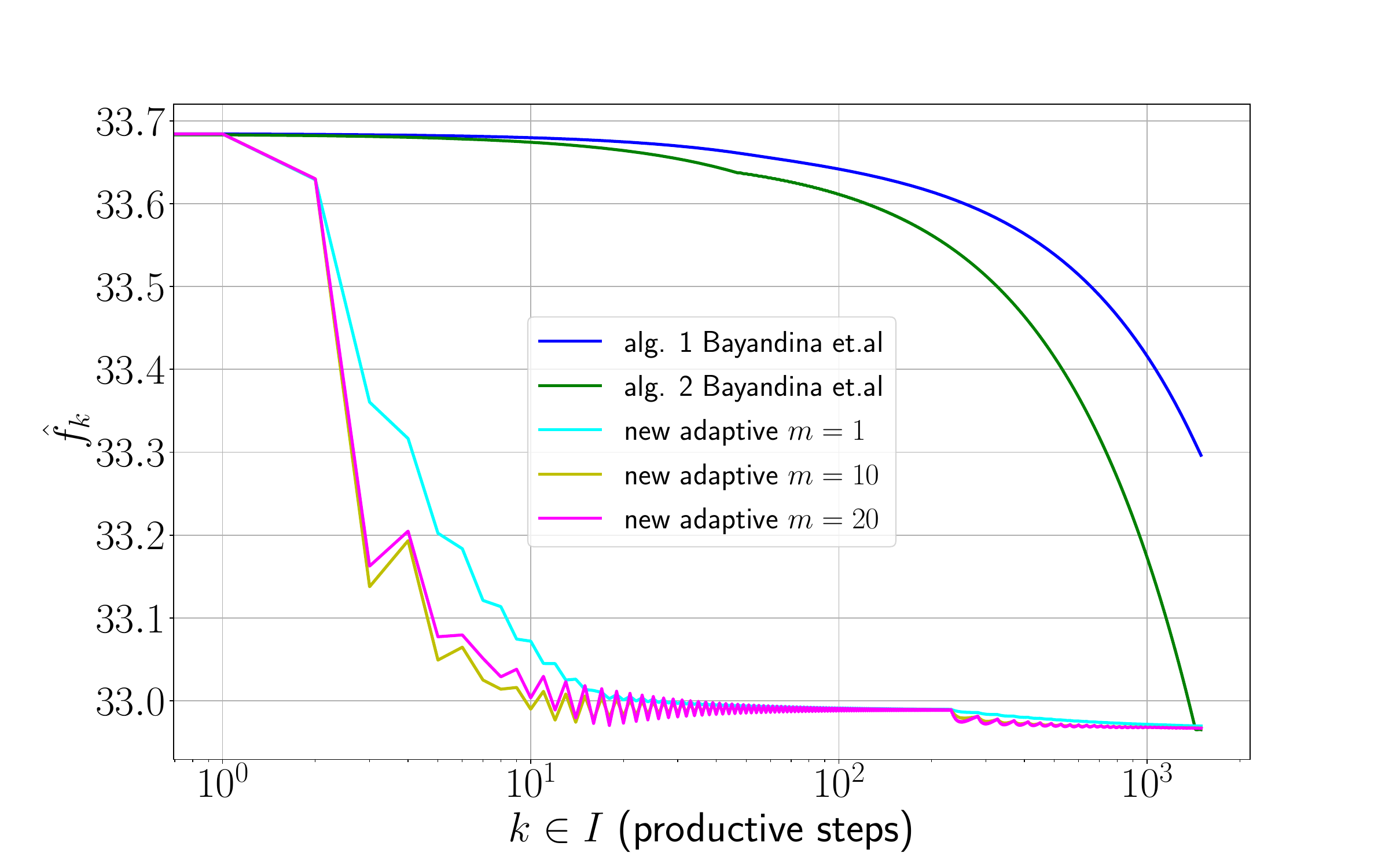}
\endminipage
\minipage{0.52\textwidth}
\includegraphics[width=\linewidth]{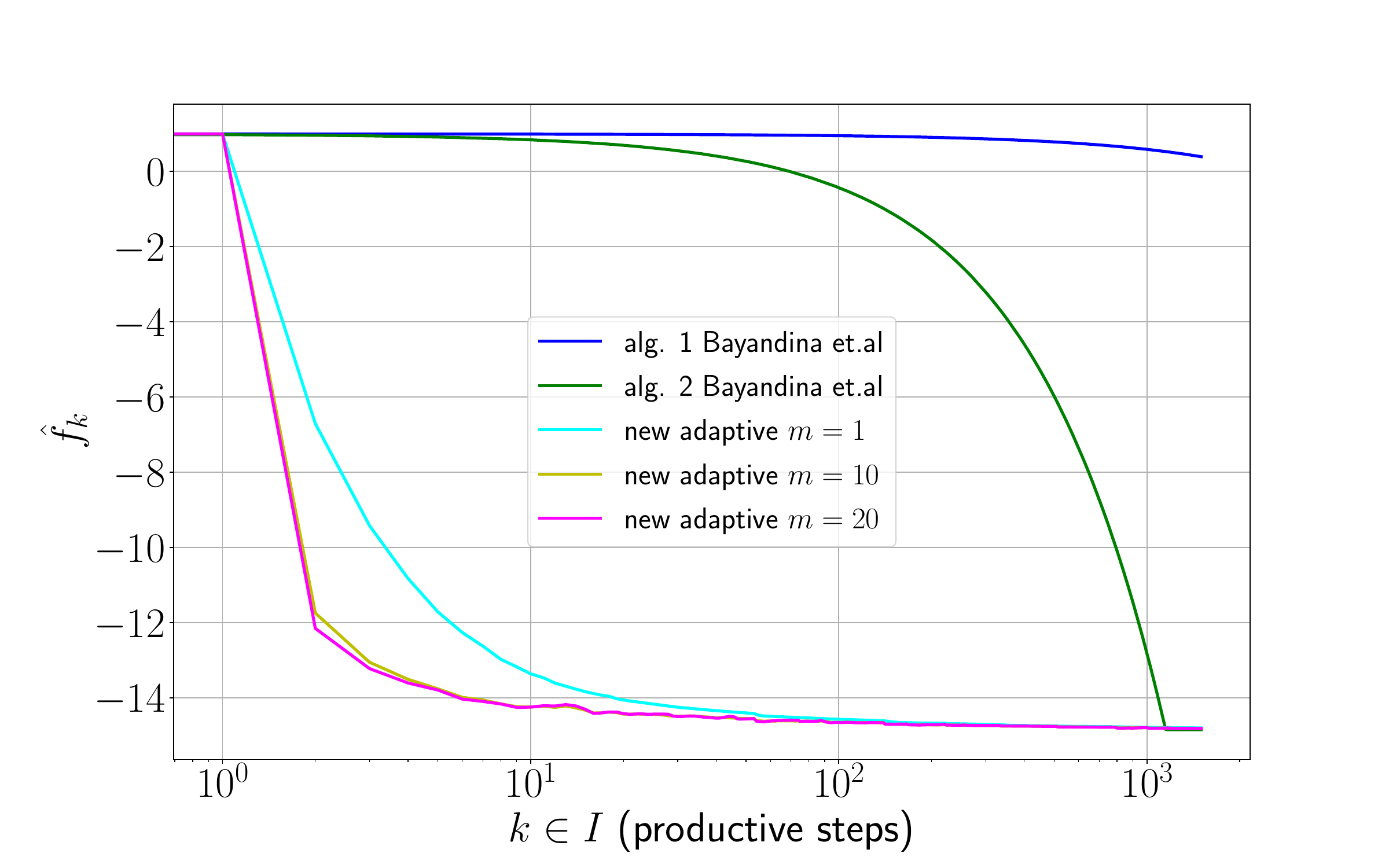}
\endminipage
\caption{Results of  Algorithm \ref{alg_new_constraints} and Algorithms 1, 2 in \cite{article:adaptive_mirror_2018}, for  problem  \eqref{smallest_covering_prob} (left) and problem \eqref{prob_max_linears} (right), with functional constraints \eqref{functional_const}, and $n=1000, p = 100, T = 100$.}
\label{fig_smallestcovering_maxlinears}
\end{figure} 
}

Now, let us compare Algorithm \ref{alg_new_constraints} (with stopping rule \eqref{stop_criter_alg3} and adaptive variant step sizes \eqref{adaptivestepsalg3}) with its modified version (Algorithm \ref{alg_new_constraints_modif}).

We will consider the problems \eqref{best_approx_prob}, \eqref{Fermat_prob}, \eqref{smallest_covering_prob} and \eqref{prob_max_linears}, with the following functional constraint
\begin{equation}\label{functional_const_alg4}
g_i(x) = \langle \alpha_i, x \rangle - \beta_i, \quad i = 1, \ldots, p ,   
\end{equation}
where $\alpha_i \in \mathbb{R}^n$ and $\beta_i \in \mathbb{R}$ are randomly generated from the normal (Gaussian) distribution with mean equals $0$ and standard deviation equals $1$. 

The comparison results are presented in Tables \ref{Tab1} and \ref{Tab2}. In these tables, we show the running time of Algorithms \ref{alg_new_constraints} and \ref{alg_new_constraints_modif} for different values of $\varepsilon$.

\begin{table}[]
\begin{tabular}{|c|ccccc|}
\hline
\multirow{2}{*}{$\varepsilon$} & \multicolumn{1}{c|}{$1/2$} & \multicolumn{1}{c|}{$1/4$} & \multicolumn{1}{c|}{$1/8$} & \multicolumn{1}{c|}{$1/16$} & $1/32$ \\ \cline{2-6} 
& \multicolumn{5}{c|}{$ m = 2$} \\ \hline
Algorithm \ref{alg_new_constraints}& \multicolumn{1}{c|}{4.902043} & \multicolumn{1}{c|}{18.470622} & \multicolumn{1}{c|}{69.866258} & \multicolumn{1}{c|}{262.627478} & 971.659847 \\ \hline
Algorithm \ref{alg_new_constraints_modif}& \multicolumn{1}{c|}{3.057222} & \multicolumn{1}{c|}{10.764352} & \multicolumn{1}{c|}{42.096594} & \multicolumn{1}{c|}{170.469682} & 667.055894 \\ \hline\hline
& \multicolumn{5}{c|}{ $m = 5$}    \\ \hline
Algorithm \ref{alg_new_constraints} & \multicolumn{1}{c|}{16.870511} & \multicolumn{1}{c|}{55.661553} & \multicolumn{1}{c|}{226.497831} & \multicolumn{1}{c|}{859.8155} & 3475.06587 \\ \hline
Algorithm \ref{alg_new_constraints_modif}& \multicolumn{1}{c|}{12.761174} & \multicolumn{1}{c|}{40.493333} & \multicolumn{1}{c|}{150.768815} & \multicolumn{1}{c|}{538.384425} & 2000.461263 \\ \hline
\end{tabular}
\caption{The running time (in seconds) of Algorithms \ref{alg_new_constraints} and \ref{alg_new_constraints_modif} for problem \eqref{best_approx_prob} with functional constraints \eqref{functional_const_alg4} and $n = 100, p = 50.$  }
\label{Tab1}
\end{table}

\begin{table}[]
\begin{tabular}{|c|ccccc|}
\hline
\multirow{2}{*}{$\varepsilon$} & \multicolumn{1}{c|}{$1/2$} & \multicolumn{1}{c|}{$1/4$} & \multicolumn{1}{c|}{$1/8$} & \multicolumn{1}{c|}{$1/16$} & $1/32$ \\ \cline{2-6} 
& \multicolumn{5}{c|}{$ m = 2$} \\ \hline
Algorithm \ref{alg_new_constraints}& \multicolumn{1}{c|}{2.786089} & \multicolumn{1}{c|}{11.755107} & \multicolumn{1}{c|}{46.759482} & \multicolumn{1}{c|}{191.072985} & 766.52804 \\ \hline
Algorithm \ref{alg_new_constraints_modif}& \multicolumn{1}{c|}{2.984926} & \multicolumn{1}{c|}{8.90336} & \multicolumn{1}{c|}{35.072455} & \multicolumn{1}{c|}{140.398595} & 559.14492 \\ \hline\hline
& \multicolumn{5}{c|}{ $m = 5$}    \\ \hline
Algorithm \ref{alg_new_constraints} & \multicolumn{1}{c|}{11.560235} & \multicolumn{1}{c|}{46.171449} & \multicolumn{1}{c|}{188.252824} & \multicolumn{1}{c|}{762.623349} &  3008.319499\\ \hline
Algorithm \ref{alg_new_constraints_modif}& \multicolumn{1}{c|}{9.813089} & \multicolumn{1}{c|}{35.101363} & \multicolumn{1}{c|}{140.252735} & \multicolumn{1}{c|}{552.735558} & 2327.445081 \\ \hline
\end{tabular}
\caption{The running time (in seconds) of Algorithms \ref{alg_new_constraints} and \ref{alg_new_constraints_modif} for problem \eqref{prob_max_linears} with functional constraints \eqref{functional_const_alg4} and $n = 100, p = 50, T = 50.$  }
\label{Tab2}
\end{table}

From Tables \ref{Tab1} and \ref{Tab2}, we see that Algorithm \ref{alg_new_constraints_modif} can significantly reduce the running time to achieve the desired accuracy of a solution of the problems under consideration. 

\bigskip
\noindent
\textbf{Acknowledgment.}  We would like to thank Bowen Yuan for pointing out the inaccuracy in the theoretical result presented in the previous version, specifically regarding the adaptive step size scheme and the fact that the associated sequence is not necessarily non-increasing.




\end{document}